\newcommand{\C}{\mathbb{C}}
\newcommand{\R}{\mathbb{R}}
\newcommand{\N}{\mathbb{N}}
\newcommand{\s}{\operatorname{s}}
\newcommand{\kL}{\mathfrak{k}}
\newcommand{\mL}{\mathfrak{m}}
\newcommand{\aL}{\mathfrak{a}}
\newcommand{\gL}{\mathfrak{g}}
\newcommand{\pL}{\mathfrak{p}}
\newcommand{\nL}{\mathfrak{n}}
\newcommand{\hL}{\mathfrak{h}}
\newcommand{\Sym}{\operatorname{Sym}}
\newcommand{\Gl}{\operatorname{GL}}
\newcommand{\an}{\operatorname{an}}
\newcommand{\Id}{\operatorname{Id}}
\newcommand{\Spin}{\operatorname{Spin}}
\newcommand{\reg}{\operatorname{reg}}
\newcommand{\Res}{\operatorname{Res}}
\newcommand{\SU}{\operatorname{SU}}
\newcommand{\Ad}{\operatorname{Ad}}
\newcommand{\PSl}{\operatorname{PSL}}
\newcommand{\Sl}{\operatorname{SL}}
\newcommand{\PSO}{\operatorname{PSO}}
\newcommand{\SO}{\operatorname{SO}}
\newcommand{\CC}{\operatorname{C}}
\newcommand{\gr}{\operatorname{gr}}
\newcommand{\vol}{\operatorname{vol}}
\newcommand{\Tr}{\operatorname{Tr}}
\newcommand{\End}{\operatorname{End}}
\newcommand{\Real}{\operatorname{Re}}
\newcommand{\sym}{\operatorname{sym}}
\newtheorem {thrm}{Theorem}[section]
\newtheorem {prop}[thrm] {Proposition}
\newtheorem {lem}[thrm] {Lemma}
\theoremstyle{definition}
\theoremstyle{remark}
\newtheorem {bmrk}[thrm] {Remark}
\begin{document}
\author{Jonathan Pfaff}
\address{Universit\"at Bonn\\
Mathematisches Institut\\
Endenicher Alle 60\\
D -- 53115 Bonn, Germany}
\email{pfaff@math.uni-bonn.de}
\title[]{Analytic torsion versus Reidemeister torsion on hyperbolic
3-manifolds with cusps}

\begin{abstract}
For a non-compact hyperbolic 3-manifold with cusps we prove an explicit
formula that relates the regularized analytic torsion associated to the even
symmetric powers of the standard
representation of $\Sl_2(\C)$ to 
the corresponding Reidemeister torsion. Our proof rests on an expression of the
analytic 
torsion in terms of special values of Ruelle zeta functions as well as on 
recent work of Pere Menal-Ferrer and Joan Porti. 
\end{abstract}

\maketitle
\setcounter{tocdepth}{1}

\section{Introduction}
\setcounter{equation}{0}
Let $X$ be a hyperbolic manifold with cusps of odd dimension $d$. Then 
$X$ is not compact but has finite volume. In a
previous 
publication \cite{MP2} we have introduced the analytic torsion $T_X(\rho)$ with 
coefficients in the flat vector bundle $E_\rho$ which is obtained by
restricting 
a finite-dimensional complex representation $\rho$ of $G:=\Spin(d,1)$ to the
fundamental group 
$\Gamma\subset G$ of $X$. The aim of this paper is to relate the torsion
$T_X(\rho)$ to 
the corresponding Reidemeister torsion invariants for the case that $X$ is
3-dimensional.

In order to motivate our results, let us first
recall the situation on a closed odd-dimensional Riemannian manifold
$\left(M,g\right)$. Let
$\Gamma$ denote the fundamental group of $M$ and let $\widetilde{M}$ be the
universal
covering space of $M$. Let $\rho$ be a finite-dimensional representation of
$\Gamma$ on a complex vector space $V_\rho$. Moreover assume that $\rho$ is
unimodular, which
means that $\rho$ satisfies
$\left|\det\rho(\gamma)\right|=1$ for all $\gamma\in\Gamma$. Let
$E_\rho:=\widetilde{M}\times_\rho V_\rho$ be the associated flat vector bundle
over $M$. Pick a Hermitian fibre metric $h$ in $E_\rho$. Then the 
analytic torsion $T_M(\rho)\in\R^+$ is a spectral invariant of $E_\rho$ 
which depends on the metrics on $M$ and $E_\rho$. It is defined 
as a weighted product over the zeta-determinants of the Hodge-Laplace 
operators which act on the $E_\rho$-valued p-forms on $M$, see \cite[section
2]{Muellerzwei}. 
There exists a combinatorial counterpart of the analytic torsion, the so called
Reidemeister  torsion. The latter is constructed in a combinatorial way out of a
smooth triangulation of $M$. It depends on a choice of bases in the homology
groups 
$H_*(M,E_\rho)$ of $M$ with coefficients in the local system defined by $\rho$. 
However, via the Hodge-DeRham isomorphism and Poincar\'e duality, the metrics
$g$ and $h$ canonically define such bases. In this way one obtains a
combinatorial
invariant $\tau_M(\rho;h)$, the Reidemeister torsion associated  
to $(M,g)$ and the Hermitian vector bundle $(E_\rho,h)$, see \cite[section
1, section 2]{Muellerzwei}. Now the analytic torsion
and 
the Reidemeister torsion are equal, i.e. one has 
$T_M(\rho;h)=\tau_M(\rho;h)$. For the case that $\rho$ is unitary, this was
proved independently by Cheeger
\cite{Cheger} and M\"uller \cite{Muellereins}. The extension to unimodular
representations is due to M\"uller \cite{Muellerzwei}.

Let us now turn to the actual setup of this paper. We let $X$ be a hyperbolic
3-manifold 
which is not compact but of finite volume. If $G:=\Sl_2(\C)$,
$K:=\SU(2)$, then 
$\widetilde X:=G/K$ can be identified with the hyperbolic 3-space and there
exists a discrete, torsion free 
subgroup $\Gamma$ of $G$ such that $X=\Gamma\backslash\widetilde X$. One can
identify
$\Gamma$ with the fundamental group of $X$. Throughout this paper, we assume
that $\Gamma$ satisfies a certain condition, which is formulated in
equation \eqref{asGamma} below. 
Let $\rho$ be an irreducible finite-dimensional complex
representation of $G$. Restrict $\rho$ to $\Gamma$ and let $E_\rho$
be the associated flat vector-bundle over $X$. One can equip 
$E_\rho$ with a canonical metric, called admissible metric.
The associated Laplace operator $\Delta_p(\rho)$ on $E_\rho$-valued $p$-forms
has a
continuous spectrum and therefore, the heat operator $\exp(-t\Delta_p(\rho))$ 
is not trace class. So the usual zeta function regularization can not be used
to define the analytic torsion. However, picking up the concept of the $b$-trace
of Melrose, employed by Park in a similar context, in \cite{MP2} we introduced
the regularized
trace $\Tr_{\reg}\left(e^{-t\Delta_p(\rho)}\right)$ of the operators
$e^{-t\Delta_p(\rho)}$ and in this way we extended the definition of
the analytic torsion to 
the non-compact manifold $X$. These definitions will be reviewed in section
\ref{sectrdet}
below. Let $T_X(\rho)$ denote the analytic torsion on $X$ associated 
to $\rho$.

The aim of the present article is to find a suitable generalization of the
aforementioned Cheeger-M\"uller 
theorems to the specific non-compact situation of the hyperbolic 3-manifold $X$
with cusps. For $m\in\frac{1}{2}\mathbb{N}$ we let $\rho(m)$ denote the $2m$-th
symmetric
power 
of the standard representation of $\Sl_2(\C)$. Let 
$\overline{X}$ be the Borel-Serre compactifcation of $X$. We recall that 
$\overline{X}$ is a compact smooth manifold with boundary and that $X$ is
diffeomorphic 
to the interior of $\overline{X}$. Moreover, $\overline{X}$ and $X$ are
homotopy-equivalent.
Thus, every representation $\rho:=\rho(m)$ of $G$ also defines a flat vector
bundle
$\overline{E_\rho}$ 
over $\overline{X}$. Now by our assumption
\eqref{asGamma} on $\Gamma$ and \cite[Proposition 2.8]{MePo1}, the 
cohomology $H^*(X,\rho)$ never
vanishes. Thus in order to define the Reidemeister torsion of
$\overline{E_\rho}$, one 
needs to fix bases in the homology $H_*(X,\rho)$. However, by \cite[Lemma
7.3]{MP2} the bundle
$E_\rho$ 
is $L^2$-acyclic and thus the metrics on $X$ and $E_\rho$ do not give 
such bases. This fact is a significant difference to the situation 
on a closed manifold described above and causes additional difficulties. To
overcome this problem, we use the normalized Reidemeister torsion which was
introduced by Menal-Ferrer and Porti \cite{MePo}. 
Recall that
the boundary of $\overline{X}$ is a disjoint union of finitely many tori $T_i$. 
For each $i$ fix a non-trivial cycle $\theta_i\in H_1(T_i;\mathbb{Z})$. 
By our assumption on the group $\Gamma$, it follows from \cite[Proposition
2.2]{MePo}
that the $\{\theta_i\}$ can be used do define a base in the homology
$H_*(X;\rho(m))$ for each 
$m\in\frac{1}{2}\mathbb{N}$. Denote the corresponding Reidemeister torsion 
by $\tau_X(\rho(m);\{\theta_i\})$. Then by \cite[Proposition 2.2]{MePo}
for each $m\in\mathbb{N}$ the quotient 
\begin{align}\label{NormTor}
\mathcal{T}_X(\rho(m)):=\frac{\left|\tau_X(\rho(m);\{\theta_i\})\right|}{
\left|\tau_X(\rho(2);\{
\theta_i\})\right|}
\end{align}
is independent of the choice of the $\{\theta_i\}$. As explained in
\cite[section 1, section 2]{MePo}, it is also independent of a given
spin-structure on $X$. The number $\mathcal{T}_X(\rho(m))$ is called normalized
Reidemeister torsion of $X$ associated to $\rho(m)$. We remark that our
parametrization of the 
representations $\rho(m)$ differs from the one used by Menal-Ferrer and Porti in
\cite{MePo} but is 
consistent with the notation of \cite{MP1}, \cite{MP2} and \cite{Pf}.
Menal-Ferrer and Porti
expressed the normalized 
Reidemeister torsion in terms of special values of Ruelle zeta functions
\cite[Theorem 5.8]{MePo}. This
relation
generalizes a result obtained by M\"uller for closed hyperbolic 3-manifolds,
\cite[equation 8.7, equation 8.8] {Muellerzwei} and 
is proved via a Dehn-approximation of $X$ by closed hyperbolic 3-manfiolds. 
Now, in analogy to \eqref{NormTor}, for $m\in\mathbb{N}$ we
define 
the normalized analytic torsion $\mathcal{T}^{\an}_X(\rho(m))$  by
\begin{align*}
\mathcal{T}_X^{\an}(\rho(m)):=\frac{T_X(\rho(m))}{T_X(\rho(2))}.
\end{align*}
Then our main result
can be 
stated in the following theorem.  
\begin{thrm}\label{Theorem1}
For $m\in\mathbb{N}$, $m\geq 2$ we define
\begin{align*}
c(m):=
&\frac{\prod_{j=1}^{m-1}\sqrt{(m+1)^2+m^2-j^2}+m}{\prod_{j=1}^{
m
}\sqrt{(m+1)^2+m^2-j^2}+m+1}\left(\frac{\sqrt{(m+1)^2+m^2}+m}{\sqrt{
(m+1)^2+m^2}+m+1}\right)^{\frac{1}{2}}.
\end{align*}
Let $\kappa(X)$ be the number of cusps of $X$. Then for $m\in\mathbb{N}$, $m\geq
2$  one has
\begin{align*}
\mathcal{T}^{\an}_X(\rho(m))=\left(\frac{c(m)}{c(2)}\right)^{\kappa(X)}
\mathcal{T}_X(\rho(m)).
\end{align*}
\end{thrm}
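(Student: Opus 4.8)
The plan is to express both the normalized analytic torsion $\mathcal{T}^{\an}_X(\rho(m))$ and the normalized Reidemeister torsion $\mathcal{T}_X(\rho(m))$ through special values at $s=0$ of one and the same Ruelle zeta function $R_{\rho(m)}(s)=\prod_{[\gamma]}\det\bigl(\Id-\rho(m)(\gamma)e^{-s\ell(\gamma)}\bigr)^{\varepsilon}$, and then to read off the constant $c(m)$ from the discrepancy between the two descriptions. On the combinatorial side this is already available: by \cite[Theorem 5.8]{MePo} the quotient $\mathcal{T}_X(\rho(m))$ equals the absolute value of a ratio of leading Laurent coefficients of $R_{\rho(m)}(s)$ and $R_{\rho(2)}(s)$ at $s=0$ (one must work with Laurent coefficients rather than values because, by assumption \eqref{asGamma} and \cite[Proposition 2.8]{MePo1}, the cohomology $H^*(X,\rho(m))$ does not vanish, so these zeta functions are singular at $0$). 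A first, routine, point is to check that the normalizations in \cite{MePo} --- the admissible metric, the spin structure, and above all their parametrization of the symmetric power representations versus the one used here --- are compatible, so that the zeta function occurring there is literally the one appearing on the analytic side.

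The analytic-side input is a formula expressing $T_X(\rho(m))$ itself through the same special value of $R_{\rho(m)}(s)$, corrected by an explicit local contribution attached to each of the $\kappa(X)$ cusps; this is the finite-volume counterpart of the relation between analytic torsion and special values of Ruelle zeta functions that holds on closed hyperbolic $3$-manifolds \cite{Muellerzwei}. I would derive it from the $b$-regularized heat trace of \cite{MP2}: first factor $R_{\rho(m)}(s)$ into an alternating product of Selberg zeta functions $Z(s;\sigma)$ indexed by the characters $\sigma$ of the centralizer $M\cong\SO(2)$ of the diagonal subgroup $A$ in $K=\SU(2)$ that occur in $\rho(m)|_M$, each $Z(s;\sigma)$ having a functional equation and a controlled behaviour at $s=0$; then apply the Selberg trace formula for $\Tr_{\reg}\bigl(e^{-t\Delta_p(\rho(m))}\bigr)$, which splits into an identity term, a hyperbolic term, a contribution of the continuous spectrum given by the scattering matrices, and weighted unipotent terms associated to the cusp subgroups, on which $\rho(m)$ acts by unipotent matrices. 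Forming the alternating sum that defines $\log T_X(\rho(m))$ and taking the Mellin transform, the hyperbolic term produces the special value of $R_{\rho(m)}$ at $s=0$, while the identity, scattering and unipotent terms collapse, after an explicit computation for $\rho(m)=\Sym^{2m}$, to $\kappa(X)$ copies of an elementary constant. The square-root expressions $\sqrt{(m+1)^2+m^2-j^2}$ should surface here as the relevant spectral quantities of the summands of $\rho(m)|_M$ (roots of shifted Laplace/Casimir eigenvalues), and tracking them carefully is exactly what produces $c(m)$.

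Granting the two descriptions, the conclusion is formal. Writing $\mathcal{T}^{\an}_X(\rho(m))=T_X(\rho(m))/T_X(\rho(2))$ and $\mathcal{T}_X(\rho(m))=|\tau_X(\rho(m);\{\theta_i\})|/|\tau_X(\rho(2);\{\theta_i\})|$, the special values of $R_{\rho(m)}$ and $R_{\rho(2)}$ enter the analytic quotient with the same exponents as in the expression of Menal-Ferrer and Porti for the combinatorial quotient, and likewise for the $\{\theta_i\}$-dependent correction forced by the non-vanishing of $H^*(X,\rho(m))$ (cf. \cite[Proposition 2.2]{MePo}); all of these factors therefore cancel upon dividing the analytic quotient by the combinatorial one, and what remains is precisely $\bigl(c(m)/c(2)\bigr)^{\kappa(X)}$. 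Consistently with this, both quotients are independent of the auxiliary choices $\{\theta_i\}$.

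The main obstacle is the analytic formula of the second paragraph: isolating the cusp contribution to $T_X(\rho(m))$ as an \emph{explicit} closed-form constant rather than an opaque combination of integrals. This requires controlling the continuous spectrum in the $b$-trace Selberg trace formula --- the scattering determinant and the weighted orbital integrals over the parabolic subgroups --- and then carrying out the representation-theoretic computation for $\Sym^{2m}$ precisely enough to recover $c(m)$ on the nose; one must also verify that the homology-basis terms genuinely match between the analytic and combinatorial sides, so that the final quotient is well defined and independent of the $\{\theta_i\}$. With that in place, Theorem~\ref{Theorem1} follows.
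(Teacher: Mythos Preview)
Your plan is the paper's plan: relate $T_X(\rho(m))$ to Ruelle/Selberg zeta values via the regularized trace formula and the functional equation, then compare with \cite[Theorem~5.8]{MePo} on the combinatorial side. Two points where your sketch is imprecise, and where the paper's argument differs in substance from what you outline:

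First, the scattering contribution does \emph{not} collapse to an elementary per-cusp constant. In the determinant formula (Proposition~\ref{Detdrd}) and in Proposition~\ref{RundTors} the scattering matrices survive as the genuinely lattice-dependent functions $\mathbf{C}(k:s)$; they are \emph{not} part of $c(m)$. The paper removes them not by computing them but by showing, via the functional equation of the symmetric Selberg zeta function (Proposition~\ref{FGSZF}) and the factorization of $R_{\rho(m)}R_{\rho(m)_\theta}$ through $R_{\rho(2)}R_{\rho(2)_\theta}$ (Proposition~\ref{PropordR}), that the $\mathbf{C}$-terms cancel identically in the ratio $T_X(\rho(m))^4/T_X(\rho(2))^4$. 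The constant $c(m)$ comes exclusively from the non-invariant distribution $J$ (the local Knapp--Stein $c$-functions), evaluated in Proposition~\ref{PropJ}; that is where the square roots $\sqrt{(m+1)^2+m^2-j^2}$ appear.

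Second, \cite[Theorem~5.8]{MePo} does not give $\mathcal{T}_X(\rho(m))$ as a Laurent coefficient of $R_{\rho(m)}$ at $s=0$; it expresses it as a volume factor times $\prod_{k=3}^m|R(k,\sigma_k)|$, i.e.\ values of the $M$-twisted Ruelle functions at integer points in the region of absolute convergence. The paper's Theorem~\ref{Theorem2} is the exactly matching analytic statement, and Theorem~\ref{Theorem1} is then an immediate comparison. Your proposed comparison at $s=0$ would leave you with the uncomputed $\mathbf{C}$-ratios and the $\Gamma$-factor of Proposition~\ref{Prop1}, and you would still need the functional-equation step to get rid of them.
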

We first remark that neither the normalized Reidemeister torsion nor the 
quotient of the analytic torsions occuring in Theorem \ref{Theorem1} 
are trivial. In fact, each of them is exponentially growing as $m\to\infty$.
This 
follows for example from Theorem \ref{Theorem2} below or from the more 
general results of \cite{MP2}. The constants $c(m)$ are
a defect caused by the non-compactness of the manifold. They appear 
via the contribution of a certain non invariant distribution to the geometric
side
of the Selberg trace formula. 

In a furthcoming publication, Theorem \ref{Theorem1} will be applied to study
for fixed $m\in\mathbb{N}$ the asymptotic behaviour of the
torsion-growth 
in the cohomology $H^*(\Gamma_i,M_{\rho(m)})$ for towers of arithmetic
groups $\Gamma_i\subset \Sl_2(\C)$. Here $M_{\rho(m)}$ denotes a
lattice in the 
representation space $V_{\rho(m)}$ of $\rho(m)$ 
which is stable under the  $\Gamma_i$. In this way we will obtain a modified
extension of some results
of Bergeron and Venkatesh \cite{BV} 
to the noncompact case. More precisely, as it was already observed by Bergeron
and Venkatesh \cite[section 2]{BV} (see also \cite[equation 1.4]{Cheger}), the
size of the torsion subgroups is closely related to the
Reidemeister torsion . On the 
other hand, the analytic torsion $T_X(\rho(m))$ is accessible to compuations
since it can be computed in a rather explicit form via the Selberg trace
formula. 

As already indicated, our proof of Theorem \ref{Theorem1} is based on an 
expression of the analytic torsion in terms of special values of Ruelle zeta
functions. Our method to establish such a relation works for every
representation
$\rho(m)$, $m\in\frac{1}{2}\mathbb{Z}$.
Thus for $k\in\frac{1}{2}\mathbb{N}$ we let $\sigma_k$ be the representation of
$M:=\SO_2(\R)$ with 
highest weight $ke_2$ as in section \ref{sechyp}. Then we define the 
Ruelle zeta function $R(s,\sigma_k)$ as in equation \eqref{DefRuelle1}. The
infinite
product in \eqref{DefRuelle1} converges for 
$s\in\C$ with $\Real(s)>2$. We will prove the following theorem.
\begin{thrm}\label{Theorem2}
Let $m\in\mathbb{N}$. Then for $m\geq 3$ one has
\begin{align*}
\frac{T_X(\rho(m))}{T_X(\rho(2))}=\left(\frac{c(m)}{c(2)}\right)^{
\kappa(X)}\exp{
\left(
-\frac{1}{\pi}\vol{(X)}(m(m+1)-6)\right)}\prod_
{
k=3}^m\left|R(k,\sigma_k)\right|,
\end{align*}
where the constants $c(\rho(m))$ and $c(\rho(2))$ are as in Theorem
\ref{Theorem1}.
Similarly, for each $m\geq 1$ there exist constants $c(m+1/2)$, defined 
in \eqref{Defc} such that for $m\geq 2$ one has 
\begin{align*}
\frac{T_X(\rho(m+\frac{1}{2}))}{T_X(\rho(\frac{3}{2}))}=\left(\frac{c(m+\frac{1}
{2})}{
c(\frac{3}{2})}\right)^{
\kappa(X)}\exp{
\left(
-\frac{1}{\pi}\vol{(X)}(m(m+2)-3)\right)}\prod_
{
k=2}^m\left|R(k+\frac{1}{2},\sigma_{k+\frac{1}{2}})\right|.
\end{align*}
\end{thrm}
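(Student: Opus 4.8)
The plan is to derive Theorem \ref{Theorem2} from the Selberg/Ruelle machinery on the non-compact hyperbolic $3$-manifold $X$, following the pattern of M\"uller's treatment in the closed case but keeping careful track of the boundary contributions to the $b$-trace. First I would recall the definition of the regularized torsion $T_X(\rho)$ via the $b$-traces $\Tr_{\reg}(e^{-t\Delta_p(\rho)})$ and express $\log T_X(\rho(m))$ as the usual Mellin-transformed alternating sum of these traces. Because $X$ is odd-dimensional and $E_{\rho(m)}$ is $L^2$-acyclic (by \cite[Lemma 7.3]{MP2}), there is no harmonic contribution and no large-time subtlety beyond the ones already handled in \cite{MP2}. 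The key input is the symmetry of the representations of $K=\SU(2)$ under the Cartan involution: as in M\"uller's argument, one rewrites the alternating sum of the form Laplacians in terms of the twisted Dirac-type operators, so that $\log T_X(\rho(m))$ becomes a linear combination of (regularized) heat traces of operators of the form $\Delta_{\sigma_k}$ acting on sections twisted by the $M$-representations $\sigma_k$, $k$ running over a suitable range.

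Next I would insert the $b$-trace version of the Selberg trace formula for these operators. The geometric side splits into the identity contribution, the hyperbolic (length-spectrum) contribution, and the additional non-invariant (weighted-orbital / unipotent / "$b$"-)terms coming from the cusps. The identity contribution is proportional to $\vol(X)$ times an explicit Plancherel-type polynomial in $k$; summing the alternating combination over $k$ produces exactly the exponential factor $\exp\!\left(-\tfrac{1}{\pi}\vol(X)(m(m+1)-6)\right)$ (resp.\ $\exp\!\left(-\tfrac{1}{\pi}\vol(X)(m(m+2)-3)\right)$ in the half-integral case) — this is a routine polynomial identity once the Plancherel density is written down. The hyperbolic contribution, after taking the Mellin transform and exponentiating, assembles precisely into the logarithmic derivatives, and hence into the special values, of the Ruelle zeta functions $R(s,\sigma_k)$; evaluating at $s=k$ (the point forced by the shift $m(m+1)$ coming from the Casimir eigenvalue on $\rho(m)$) gives the product $\prod_{k=3}^{m}|R(k,\sigma_k)|$. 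I would set up the Ruelle function as an alternating product of Selberg zeta functions $Z(s,\sigma_k)$, use the known functional equations and the locations of their zeros/poles to justify convergence and the evaluation at the edge point, and collect the telescoping so that only the range $k=3,\dots,m$ survives in the normalized quotient $T_X(\rho(m))/T_X(\rho(2))$.

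The remaining, and genuinely delicate, part is the cusp contribution: the non-invariant distribution on the geometric side of the $b$-trace formula (the term flagged in the introduction as producing the constants $c(m)$). I would isolate its Mellin transform, which involves the Fourier transform of the Harish-Chandra $\Theta$-type function together with the weighting factor from Melrose's $b$-calculus, and show that after the same alternating sum over $k$ it contributes a term depending only on $m$ and on the number of cusps $\kappa(X)$, with no dependence on the discrete data of $\Gamma$. Carrying out this computation explicitly — tracking the digamma/$\log$ terms and the finite sums $\prod_{j=1}^{m-1}\sqrt{(m+1)^2+m^2-j^2}$ etc.\ that appear from the roots of the relevant characteristic polynomial — yields precisely the closed form $c(m)$ of Theorem \ref{Theorem1}, and hence the factor $(c(m)/c(2))^{\kappa(X)}$ here. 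I expect this cusp term to be the main obstacle: one must be sure that the scattering/unipotent contributions that are \emph{not} captured by a naive "compact-manifold" computation are exactly the ones encapsulated in $c(m)$, and that nothing else (e.g.\ a contribution from the continuous spectrum at the special point) is left over. Once that is settled, Theorem \ref{Theorem2} follows by assembling the identity, hyperbolic, and cusp pieces; the half-integral case is identical, with $\rho(m)$ replaced by $\rho(m+\tfrac12)$, the Casimir shift $m(m+1)$ replaced by $(m+\tfrac12)(m+\tfrac32)=\tfrac14(4m^2+8m+3)$ (whence the $m(m+2)$ in the exponent after normalizing against $\rho(\tfrac32)$), and the range of $k$ shifted to half-integers $k+\tfrac12$, $k=2,\dots,m$.
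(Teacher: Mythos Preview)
Your outline is close in spirit to the paper's argument, but it misses the mechanism that actually makes the computation close up. The Mellin transform of the hyperbolic term in the trace formula does not produce Ruelle values $R(k,\sigma_k)$ directly; it produces the \emph{Selberg} zeta values $S(m+1,\sigma_m)$ and $S(m,\sigma_{m+1})$ (this is the content of the determinant formula, Proposition~\ref{Detdrd}, combined with Proposition~\ref{Torsdreid}). To pass from these to the Ruelle product $\prod_k R_{\sym}(k,\sigma_k)$ one uses the factorization \eqref{GlRSdrd} of $R_{\rho(m)}R_{\rho(m)_\theta}$ as a quotient of four Selberg functions, two of which sit at \emph{negative} arguments $s-m-1$ and $s-m$. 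Since the determinant formula is only valid for $\Real(s)>0$, $\Real(s^2)>0$, one must invoke the functional equation (Proposition~\ref{FGSZF}) to flip those arguments. That functional equation is where the scattering determinants $\mathbf{C}(k:s)$ enter, and these are genuinely $\Gamma$-dependent objects built from the constant-term matrices of the Eisenstein series --- they are \emph{not} absorbed into the constant $c(m)$, which comes solely from the intertwining contribution $\mathcal{LM}J$. This is exactly the content of Proposition~\ref{Prop1} (=~Proposition~\ref{RundTors}): the formula for $T_X(\rho(m))^4$ alone carries an uncontrolled factor $\mathbf{C}(m:0)\mathbf{C}(m+1:m-s)/\mathbf{C}(m+1:0)\mathbf{C}(m:m+1-s)$ in addition to $c(m)^{4\kappa(X)}$.

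Thus your expectation that ``the scattering/unipotent contributions \dots\ are exactly the ones encapsulated in $c(m)$'' is false for the un-normalized torsion. The reason Theorem~\ref{Theorem2} is stated for the \emph{quotient} $T_X(\rho(m))/T_X(\rho(2))$ is precisely that the $\mathbf{C}$-terms cancel only after one also applies the functional equation on the Ruelle side (Proposition~\ref{Fgldrd}) and combines it with the telescoping identity $R_{\rho(m)}R_{\rho(m)_\theta}=R_{\rho(2)}R_{\rho(2)_\theta}\prod_{k=3}^m R_{\sym}(s+k,\sigma_k)R_{\sym}(s-k,\sigma_k)$; this is Proposition~\ref{PropordR}. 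Matching the two expressions for $T_X(\rho(m))^4/T_X(\rho(2))^4$ then eliminates all $\mathbf{C}$'s and the $\Gamma(s-1)^{-2\kappa(X)}$ factor simultaneously, leaving only the volume exponential, $c(m)/c(2)$, and the $|R(k,\sigma_k)|^2$. Your plan would need to insert both the functional equation step and this cancellation argument explicitly; without them the hyperbolic piece does not ``assemble into'' the claimed Ruelle product, and the leftover boundary term is not a universal constant.
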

If one combines the first statement of the previous Theorem with the
corresponding result 
of Menal-Ferrer and Porti \cite[Theorem 5.8]{MePo}, 
Theorem \ref{Theorem1} follows immediately.

We note that one can not combine Theorem \ref{Theorem2} and the
corresponding result 
\cite[Theorem 5.8]{MePo} of Menal-Ferrer and Porti to deduce an analog 
of Theorem \ref{Theorem1} for the representations $\rho(m+1/2)$,
$m\in\mathbb{N}$, $m\geq 2$. The problem is that in this case the normalized
Reidemeister torsion and the Ruelle zeta functions occuring in
\cite[Theorem 5.8]{MePo} are defined 
with respect to an acylic spin-structure of $X$. In the setting of the present
article, 
this means  that one replaces the group $\Gamma\subset G$ by a suitable group
$\Gamma'\subset G$ 
which has the same image in $\PSl_2(\C)$ as $\Gamma$. Clearly, for 
each $k\in\mathbb{N}$ the Ruelle zeta functions 
$R(s,\sigma_k)$ 
remain the same under this change of the group $\Gamma$ since
each representation 
$\sigma_{k}$, $k\in\mathbb{N}$ descends to 
a representation of $\PSO_2(\R)$. However, this is no longer 
the case for the representations $\sigma_{k}$,
$k\in\mathbb{Z}-\frac{1}{2}\mathbb{Z}$. Furthermore, Theorem \ref{Theorem2}
can not be applied 
to a group $\Gamma'$ corresponding to an acyclic spin-structure of $X$ since
by \cite[Lemma 2.4]{MePo1} such a group never satisfies 
the assumption \eqref{asGamma}. This assmuption is yet needed 
for our compuations involving the Selberg trace formula and in order to apply
the results 
about the meromorphic continuation of the zeta functions
obtained in \cite{Pf}. 

We shall now explain our method to prove Theorem \ref{Theorem2}. We first recall
that on closed odd-dimensional hyperbolic manifolds Fried \cite{Fried} related
the behaviour 
of the Ruelle zeta function $R_\rho$ associated to a unitary representation
$\rho$
of $\Gamma$ to the corresponding analytic torsion $T_X(\rho)$. In particular, 
if $\rho$ is acyclic, which means that the cohomology $H^*(X,E_\rho)$ vanishes,
he showed that the function $R_\rho$ is regular 
at $0$ and that $R_\rho(0)=T_X(\rho)^2$. However, there 
is no obvious method of constructing non-trivial unitary representations 
of (cocompact) hyperbolic lattices $\Gamma$.  On the other hand,  if
$X$ is a closed $2n+1$-dimensional hyperbolic manifold and if $\rho$ is a
representation of $\Spin(2n+1,1)$ 
which is not invariant under the standard Cartan-involution $\theta$, then the
restriction of $\rho$ to 
the fundamental group $\Gamma\subset \Spin(2n+1,1)$ of $X$ is an acyclic
unimodular 
representation. Generalizing Fried's results, Br\"ocker
\cite{Brocker} and Wotzke \cite{Wotzke} have shown that for such
representations $\rho$ 
the Ruelle zeta function 
$R_\rho$ is regular at $0$ and that one has $R_\rho(0)=T_X(\rho)^2$. 

We establish a generalization of the results of Br\"ocker and Wotzke to the
non-compact 
hyperbolic 3-manifold $X$ which is sufficient to prove Theorem \ref{Theorem2}
and thereby Theorem \ref{Theorem1}. The first main problem ist that the Ruelle
zeta
function $R_{\rho(m)}$ is 
a priori defined only for $s\in\C$ with $\Real(s)>2$. However, as a special case
of our results obtained in \cite{Pf}, it follows that $R_{\rho(m)}$ admits a
meromorphic continuation to 
the entire complex plane. This is the first step in the proof. The main
technical issue of this paper is now to 
relate the behaviour of $R_{\rho(m)}$ at zero to the regularized analytic
torsion
$T_X(\rho(m))$. Let $\rho(m)_\theta:=\rho(m)\circ\theta$. We prove the following
proposition. 
\begin{prop}\label{Prop1}
For $m\in\mathbb{N}$ let the constant $c(m)$ be as in Theorem \ref{Theorem1}.
Then
\begin{align*}
&{T_X(\rho(m))}^4\\
=&c(m)^{4\kappa(X)}\frac{\mathbf{C}(m:0)}{\mathbf{C}(m+1:0)}\lim_
{s\to
0}\bigr(R_{\rho(m)}(s)R_{\rho(m)_{\theta}}(s)\frac{\mathbf{C}(m+1:m-s)}{\mathbf{
C}(m:m+1-s)}\Gamma^{
-2\kappa(X)}(s-1)\bigl).
\end{align*}
Here the functions $\mathbf{C}(k:s)$ are meromorphic functions of $s$ which are
constructed
out of the scattering determinant associated to the representation $\sigma$ of
$M$ 
with highest weight $ke_2$ and a certain $K$-type. They are defined in section
\ref{secfe}.
For $m\in\frac{1}{2}\mathbb{N}$, a similar formula holds.
\end{prop}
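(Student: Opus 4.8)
\emph{The plan} is to adapt the strategy of Fried, Br\"ocker and Wotzke for closed hyperbolic manifolds to the cusped manifold $X$, replacing ordinary traces by the regularized $b$-traces of \cite{MP2} and carrying the cusp contributions along until they can be packaged into the functions $\mathbf{C}(k:s)$ and the constants $c(m)$. First I would express the Ruelle zeta functions through Selberg zeta functions. For $k\in\frac{1}{2}\N$ let $Z(s,\sigma_k)$ be the Selberg zeta function attached to the representation $\sigma_k$ of $M$ and the $K$-type used in the construction of $\mathbf{C}(k:s)$; by \cite{Pf} it extends meromorphically to $\C$ and satisfies a functional equation whose Gamma- and scattering-factors are exactly (built out of) the $\mathbf{C}(k:s)$. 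Decomposing the $M$-module $\Lambda^{\bullet}\nL^{\ast}\otimes\rho(m)$ into irreducibles and using that closed geodesics on $X$ correspond to $\Gamma$-conjugacy classes of hyperbolic elements, one rewrites $R_{\rho(m)}(s)$, defined a priori only for $\Real(s)>2$, as an alternating product of shifted Selberg zeta functions $Z(s+k,\sigma_k)^{\pm1}$, and likewise for $R_{\rho(m)_{\theta}}(s)$. Since $\rho(m)_\theta\cong\overline{\rho(m)}$ for $\Sl_2(\C)$, one has $R_{\rho(m)_\theta}(s)=\overline{R_{\rho(m)}(\bar s)}$, so the product $R_{\rho(m)}(s)R_{\rho(m)_\theta}(s)$ equals $|R_{\rho(m)}(s)|^2$ on the real line; this is the source of both the fourth power of $T_X(\rho(m))$ and the absolute values in Theorem \ref{Theorem2}. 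Sorting the factors in this product according to the torsion weights $(-1)^p p$ is the combinatorial heart of the argument.

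Next I would relate the Selberg zeta functions to the analytic torsion. Computing the logarithmic derivative $Z'/Z(s,\sigma_k)$ from the Selberg trace formula for the Bochner--Laplace operators on $E_{\rho(m)}$-valued forms and applying an inverse Laplace transform, one recovers the regularized heat traces $\Tr_{\reg}(e^{-t\Delta_p(\rho(m))})$; forming the torsion-weighted alternating combination then yields $\log T_X(\rho(m))$ exactly as in \cite[section 7]{MP2}. Feeding in the first step and keeping track of the functional-equation factors, one obtains an identity in which $R_{\rho(m)}(s)R_{\rho(m)_\theta}(s)$, corrected by the ratio $\mathbf{C}(m+1:m-s)/\mathbf{C}(m:m+1-s)$ of scattering-type factors and by the constant $\mathbf{C}(m:0)/\mathbf{C}(m+1:0)$ normalising these at $s=0$, agrees with a power of $T_X(\rho(m))$ up to the cusp corrections.

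Finally I would analyse the cusp corrections and the limit $s\to0$. The $b$-trace differs from an honest trace by a boundary term that, in the trace formula, splits into the scattering contribution (already absorbed into the $\mathbf{C}(k:s)$) and the value of a non-invariant distribution attached to each cusp, whose closed-form evaluation I expect to produce precisely the constants $c(m)$, hence the factor $c(m)^{4\kappa(X)}$. Letting $s\to0$: by \cite[Lemma 7.3]{MP2} the bundle $E_{\rho(m)}$ is $L^2$-acyclic, so the discrete spectrum contributes neither a zero nor a pole, and the entire order of vanishing comes from the continuous spectrum, where the Gamma-factors of the scattering matrices degenerate, one contribution per cusp, to a factor matching $\Gamma(s-1)^{-2\kappa(X)}$ and making the limit finite and nonzero. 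Collecting the constants from all three steps gives the stated formula; the case $m\in\frac{1}{2}\N$ is identical once one uses the $\sigma_k$ with half-integral $k$ and the constants $c(m+\frac{1}{2})$ of \eqref{Defc}. The main obstacle is exactly this last step: identifying the precise non-invariant distribution the $b$-trace attaches to each cusp and evaluating it in closed form, together with the bookkeeping of the zeros, poles and Gamma-factors of the scattering determinants at $s=0$ needed to see that the right-hand side has a finite, nonzero limit; the first two steps are lengthy but are essentially transcriptions of the closed case with $\Tr_{\reg}$ in place of $\Tr$.
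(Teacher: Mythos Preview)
Your strategy is the paper's strategy: factor $R_{\rho(m)}(s)R_{\rho(m)_\theta}(s)$ through symmetric Selberg zeta functions, relate these to graded determinants of the auxiliary operators $A(\sigma_k)$, and identify the resulting determinant ratio with $T_X(\rho(m))^4$ via Proposition~\ref{Torsdreid}. Two points, however, are misidentified, and they are exactly where the cusp contributions enter.

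First, you do not say \emph{why} the functional equation is forced rather than optional. The determinant formula (Proposition~\ref{Detdrd}) expressing $S(s,\sigma_k)$ through $\det_{\gr}(A(\sigma_k)+s^2)$ is valid only for $\Real(s)>0$, $\Real(s^2)>0$; the regularized determinant is simply not defined outside this region. In the factorisation
\[
R_{\rho(m)}(s)R_{\rho(m)_\theta}(s)=\frac{S(s+m+1,\sigma_m)\,S(s-m-1,\sigma_m)}{S(s+m,\sigma_{m+1})\,S(s-m,\sigma_{m+1})}
\]
the two factors with arguments $s-m-1$ and $s-m$ lie far outside this half-plane near $s=0$, so one cannot apply the determinant formula to them directly. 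The paper therefore uses the functional equation (Proposition~\ref{FGSZF}) to replace them by $S(-s+m+1,\sigma_m)$ and $S(-s+m,\sigma_{m+1})$, which do lie in the admissible region; this is Proposition~\ref{PropRSneu}, and it is the step at which both the scattering factors $\mathbf{C}(k:\,\cdot\,)$ and the ratio $\Gamma(s-1)^{2\kappa(X)}/\Gamma(s+1)^{2\kappa(X)}$ appear. Second, your attribution of the $\Gamma(s-1)^{-2\kappa(X)}$ factor to a degeneration of the scattering matrices is wrong: the scattering contribution is entirely contained in the $\mathbf{C}(k:s)$. The explicit $\Gamma$-factors come from the \emph{invariant} distribution $\mathcal{I}$ attached to the weighted orbital integrals (Proposition~\ref{PropIi}), which contributes $\Gamma(s+k)^{2\kappa(X)}$ to the determinant formula and hence $\Gamma(-s+k)^{2\kappa(X)}/\Gamma(s+k)^{2\kappa(X)}$ to the functional equation; after the flip and the obvious cancellations one is left with $\Gamma(s-1)^{2\kappa(X)}/\Gamma(s+1)^{2\kappa(X)}$. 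You are right that $c(m)^{4\kappa(X)}$ is the value at $s=0$ of the non-invariant contribution $J$ (this is $J_{\rho(m)}(0)$, computed from Proposition~\ref{PropJ}). The $L^2$-acyclicity is used earlier, to ensure the regularized determinants defining $T_X(\rho(m))$ exist, not in the limit $s\to0$ itself.
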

Due to the presence of the scattering term and the $\Gamma$-factor, Proposition
\ref{Prop1}
does not imply that the Ruelle zeta function $R_{\rho(m)}$ 
is regular at $0$. However, from Proposition \ref{Prop1} one can deduce Theorem
\ref{Theorem2}
which is much more explicit. 

We remark that for odd-dimensional hyperbolic
manifolds 
with cusps and for unitary representatiosn of $\Gamma$, Park studied the
relation 
between the behaviour of the Ruelle zeta function at $0$ and the analytic
torsion \cite{Park}. However, his results can not be applied here since the
representations 
$\rho(m)$ are not unitary. Moreover the paper \cite{Park} decisively uses the
results of 
the earlier paper of Gon and Park \cite{Gon} on Selberg and Ruelle zeta
functions and the results of this paper do not imply that the Ruelle zeta
function 
$R_{\rho(m)}$ admits a meromorphic continuation to $\C$. Furthermore, in the
3-dimensional case, the paper \cite{Gon}
only covers the Selberg and Ruelle zeta functions associated to the fundamental
representations 
$\sigma_0$, $\sigma_1$ of $M$  and it is unclear whether the methods of
Gon and Park can be applied to other representations of $M$ since among other
things they use a special type of a Paley-Wiener theorem which presently exists
only for 
the fundamental representations of $K$. 
The proof of our main results is yet
based on the
meromorphic continuation of the Ruelle and Selberg zeta functions associated to
any representation $\sigma_k$, $k\in\mathbb{Z}$ as well as on their relation to
geometric differential operators on $X$. These results have been established in 
our preceding paper \cite{Pf} in the more general context of odd-dimensional
hyperbolic manifolds with cusps. We want to point out that,
as well as in the preceding 
paper \cite{Pf}, a lot of the methods used in the present article had 
been developed by Bunke and Olbrich \cite{Bunke} for the closed case and are
generalized here to
the non-compact situation. 
This generalization is made possible by the work of Hoffmann who proved 
an invariant trace formula \cite{Hoffmann2} and who determined the Fourier
transform of the
associated 
weighted orbital integral \cite{Hoffmann}.

To prove Proposition \ref{Prop1}, we first express the analytic torsion
$T_X(\rho(m))$
as a weighted product of graded determinants associated to differential
operators 
$A(\sigma)$ for certain $\sigma\in\hat{M}$.  Here the $A(\sigma)$ are of Laplace
type 
and act on graded locally homogeneous vector bundles  $E(\sigma)$ over $X$. 
By the same argument as in the closed case \cite{Wotzke}, \cite{Muellerzwei} 
the product $R_{\rho(m)}(s) R_{\rho(m)_\theta}(s)$ can be expressed
as a
weighted 
product of Selberg zeta functions $S(s,\sigma)$ with shifted arguments for the
same set 
of representations $\sigma\in\hat{M}$.  To relate the analytic torsion to the
Ruelle zeta function, 
we first prove a determinant formula which expresses the Selberg zeta function
$S(s,\sigma)$ 
by the graded determinant of $A(\sigma)+s^2$. The prove is based on an explicit
evaluation of 
the Laplace-Mellin transform of each term occuring on the geometric side of the
Selberg 
trace formula applied to a particular test function $h_t^\sigma$. However, in
contrast to the closed case, the determinant 
formula can only be applied to $s\in\C$ with $\Real(s)$ and $\Real(s^2)$
sufficiently large. Thus to complete the proof of Proposition \ref{Prop1}, we
also need to establish a functional equation for the symmetric Selberg zeta
functions. Via the
functional equations the 
scattering terms appear in Proposition \ref{Prop1}.

This paper is organized as follows. In section \ref{sechyp} we fix notations
and 
recall some basic facts about hyperbolic 3-manifolds. In section
\ref{seczeta} we briefly recall the definition of the Ruelle and Selberg
zeta functions. The definition of the 
regularized traces and the analytic torsion are reviewed in section
\ref{sectrdet}. In 
sections \ref{secdetfrml} and \ref{secfe} we establish the determinant
formula respectively the 
functional equations of the symmetric Selberg zeta functions. The proof of our
main results 
is completed in the final section \ref{secpr}. 

\bigskip
{\bf Acknowledgement.}
This paper contains parts of the author's PhD thesis. He would like to thank
his 
supervisor Prof. Werner M\"uller for his constant support and for helpful
suggestions.

\section{Hyperbolic 3-manifolds with cusps}\label{sechyp}
Let $\mathbb{H}^3$ denote the hyperbolic $3$-space equipped with the hyperbolic 
metric of constant curvature $-1$. 
Let $G=\Sl_{2}(\mathbb{C})$, regarded as a real Lie group, and let $K=\SU(2)$.
Then $K$ is a maximal compact subgroup of $G$. The groups $G$ and $K$ can be
identified 
with the groups $\Spin(3,1)$ and $\Spin(3)$ and there is a canonical 
isomorphism $\mathbb{H}^3\cong G/K$. The quotient $G/K$ will also be 
denoted by $\widetilde{X}$ in the sequel. Let $\gL$ and $\kL$ be the Lie
algebras 
of $G$ and $K$. We let $\theta$ be the standard Cartan involution of $\gL$. The
lift 
of $\theta$ to $G$ will be denoted by the same latter. Let
$\gL=\kL\oplus\pL$ be the corresponding Cartan decomposition. Then the 
Killing form $B$ of $\gL$ defines an inner product on $\pL$. We consider the
inner product 
$\left<\cdot,\cdot\right>$ on $\pL$ which is given by $\frac{1}{4} B$.  The
tangent 
space of $\widetilde X$ at  $1K$ can be identified with $\pL$ and therefore the
inner product
$\left<\cdot,\cdot\right>$ defines an invariant metric on $\widetilde X$. This
metric 
is the metric of constant curvature $-1$.

Now we let $\Gamma$ be a discrete, torsion free subgroup of $G$ with
$\vol(\Gamma\backslash G)<\infty$ and we let
\begin{align*}
X=\Gamma\backslash \widetilde{X}. 
\end{align*}
We equip $X$ with
the Riemannian metric
induced from $\widetilde{X}$.
Let $\mathfrak{P}$ be a fixed
set of representatives of $\Gamma$-nonequivalent proper cuspidal parabolic 
subgroups of
$G$. Then $\mathfrak{P}$ is finite. Throughout this paper we assume that for
every $P\in\mathfrak{P}$ with Langlands
decomposition $P=M_PA_PN_P$ one has
\begin{align}\label{asGamma}
\Gamma\cap P=\Gamma \cap N_P.
\end{align}
This condition is satisfied for example if $\Gamma$ is ``neat'', which means 
that the group generated by the eigenvalues of any $\gamma\in\Gamma$ contains 
no roots of unity $\ne1$. It also holds for many groups $\Gamma$ which are 
of arithmetic significance.
Let $\kappa(X):=\#\mathfrak{P}$. The geometric shape of $X$ can be described as
follows, see for example \cite{MP2}. There exists a $Y_{0}>0$ and for every
$Y\geq Y_{0}$
a compact manifold $X(Y)$ with smooth boundary such that $X$
admits a decomposition as 
\begin{align}\label{Zerlegung X}
X=X(Y)\cup \bigsqcup_{P\in\mathfrak{P}}F_{P,Y}
\end{align}
with
$X(Y)\cap F_{P,Y}=\partial X(Y)=\partial F_{P,Y}$ and $F_{P,Y}\cap
F_{P',Y}=\emptyset$ if $P\neq P'$. Here the $F_{P,Y}$ are the cusps of $X$. They
satisfy
$F_{P,Y}\cong
[Y,\infty)\times
T^2$, where $T^2$ denotes the flat 2-torus. Moreover the restriction of the
metric 
of $X$ to $F_{P,Y}$ is given as a warped product
$y^{-2}\frac{d^2}{dy^2}+y^{-2}g_{0}$, where 
$g_{0}$ denotes the suitably normalized standard-metric of $T^2$.

We let $P_0:=MAN$ be the standard parabolic subgroup of $G$. Then we have
$M=\SO_2(\R)$. By $\mL$, $\aL$ and
$\nL$ we denote the Lie algebras of $M$, $A$ and $N$. Then
$\hL:=\aL\oplus \mL$ 
is a Cartan subalgebra of $\gL$ and $\mL$ is a Cartan subalgebra of $\kL$. We
let $e_1\in \aL^*$ denote the restricted root which is implicit 
in the choice of $\nL$ and we fix $e_2\in i\mL^*$ such that positive roots 
$\Delta^+(\gL_\C,\hL_\C)$ can be defined by $\Delta^+(\gL_\C,\hL_\C):=\{e_1+e_2,
e_1-e_2\}$, see \cite[section 2]{MP1}. We let $H_1\in\aL$ be such that
$e_1(H_1)=1$.

By $\hat{M}$ and $\hat{K}$ we denote the equivalence classes of 
finite-dimensional irreducible representations of $M$ respectively $K$. For 
$\nu\in\hat{K}$, $\sigma\in\hat{M}$ we denote the multiplicity of $\sigma$ in
$\nu|_M$ 
by $\left[\nu:\sigma\right]$. Then every representation in $\hat{M}$ is
one-dimensional and the elements 
of $\hat{M}$ will be parametrized as $\sigma_j$,
$j\in\frac{1}{2}\mathbb{Z}$. Here $\sigma_j$ denotes the representation 
of $M$ with highest weight $je_2$. For $l\in\frac{1}{2}\mathbb{N}$ we
let $\nu_l$ be the representation of $K$ with highest weight $le_2$. Then 
$\hat{K}$ is parametrized by the elements $\nu_l$, $l\in\frac{1}{2}\mathbb{N}$.
Our
parametrization is different from the one used in \cite{Muller2} but consistend
with 
the notation of \cite{MP1}, \cite{MP2}. For
$k\in\frac{1}{2}\mathbb{Z}$ 
we define a representation $w_0\sigma_k$ of $M$ by $w_0\sigma _k:=\sigma_{-k}$,
see \cite[section 2]{MP1}. The representation rings
of $M$ and 
$K$ will be denoted by $R(M)$ respectively $R(K)$. Then the following lemma
holds. 
\begin{lem}\label{branching}
Let $\nu=\nu_l$, $l\in\mathbb{N}$. Then for $\sigma\in\hat{M}$ one has
$\left[\nu:\sigma\right]=1$ 
if $\sigma=\sigma_k$, $k\in\mathbb{Z}$, $|k|\leq l$ and
$\left[\nu:\sigma\right]=0$ otherwise. 
Let $\sigma=\sigma_k$, $k\in\mathbb{Z}-\{0\}$. For $\nu\in\hat{K}$,
$\nu=\nu_{|k|}$ let
$m_\nu(\sigma)=1$. For $\nu=\nu_{|k|-1}$ let $m_\nu(\sigma)=-1$. Finally, 
for $\nu\notin \{\nu_{|k|},\nu_{|k|-1}\}$ let $m_\nu(\sigma)=0$. 
Then in $R(M)$ one has
$\sigma+w_0\sigma=\sum_{\nu\in\hat{K}}m_\nu(\sigma)\nu|_{M}$. 
\end{lem}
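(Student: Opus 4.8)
The statement is a branching-rule computation for the chain $M = \SO_2(\R) \subset K = \SU(2)$, so the plan is to reduce everything to the classical representation theory of $\SU(2)$. Recall that $\nu_l$ has highest weight $le_2$, which for $l \in \N$ is the irreducible $\SU(2)$-representation of dimension $2l+1$; its weights, read off as characters of the maximal torus $M$, are exactly $je_2$ for $j = -l, -l+1, \dots, l$, each with multiplicity one. Since every $\sigma \in \hat M$ is one-dimensional and equals $\sigma_k$ precisely when $M$ acts by the character $ke_2$, the decomposition of $\nu_l|_M$ into $M$-irreducibles is just the weight-space decomposition of $\nu_l$. This immediately gives $[\nu_l : \sigma_k] = 1$ for $k \in \Z$ with $|k| \le l$ and $0$ otherwise, which is the first assertion.

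For the second assertion I would argue in $R(M)$ directly. Fix $k \in \Z - \{0\}$ and set $\sigma = \sigma_k$, so $w_0\sigma = \sigma_{-k}$. Using the first part, $\nu_{|k|}|_M = \sum_{j=-|k|}^{|k|} \sigma_j$ and $\nu_{|k|-1}|_M = \sum_{j=-|k|+1}^{|k|-1} \sigma_j$ in $R(M)$ (here $|k| \ge 1$ so $\nu_{|k|-1}$ makes sense, and if $|k| = 1$ then $\nu_0|_M = \sigma_0$). Subtracting, the telescoping of the weight ranges leaves exactly the two extreme weights:
\begin{align*}
\nu_{|k|}|_M - \nu_{|k|-1}|_M = \sigma_{|k|} + \sigma_{-|k|} = \sigma_k + w_0\sigma_k,
\end{align*}
since $\{|k|, -|k|\} = \{k, -k\}$. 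With the stated choice $m_{\nu_{|k|}}(\sigma) = 1$, $m_{\nu_{|k|-1}}(\sigma) = -1$, and $m_\nu(\sigma) = 0$ for all other $\nu \in \hat K$, this is precisely $\sigma + w_0\sigma = \sum_{\nu \in \hat K} m_\nu(\sigma)\,\nu|_M$ in $R(M)$.

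There is essentially no serious obstacle here; the only point requiring a word of care is the boundary case $|k| = 1$, where $\nu_{|k|-1} = \nu_0$ is the trivial representation and one must check that the formula $\nu_0|_M = \sigma_0$ is consistent with the general branching formula (it is, being the $l = 0$ case), and more generally that the restriction to $l \in \N$ (rather than $l \in \frac{1}{2}\N$) is what makes all the $\sigma_j$ appearing have integer index $j$, matching the hypothesis $k \in \Z$. One should also note that the claim is an identity in $R(M)$, not a genuine direct sum decomposition of a representation, which is why the coefficient $-1$ is permitted; this is exactly the virtual combination that will later feed into the alternating sums defining the graded bundles $E(\sigma)$ and the analytic torsion. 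I would present the whole argument as the short weight-counting computation above.
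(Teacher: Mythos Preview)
Your argument is correct: the weight decomposition of the $\SU(2)$-module $\nu_l$ restricted to the maximal torus $M$ gives exactly the stated branching rule, and the telescoping difference $\nu_{|k|}|_M-\nu_{|k|-1}|_M$ yields $\sigma_k+\sigma_{-k}$ as you write. The paper does not spell this out but simply cites the relevant equations in \cite{Muller2} (adjusting for parametrization), so your proof is a self-contained version of the same elementary weight computation underlying that reference.
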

\begin{proof}
This follows from \cite[equations 4.1, 4.2]{Muller2}, taking the different
parametrizations into account.
\end{proof}
 
For $m\in\frac{1}{2}\mathbb{N}$ we let $\rho(m)$ denote the $2m$-th symmetric 
power of the standard representation of $G=\Sl_2(\C)$ over
$V_{\rho(m)}:=\Sym^{2m}
\C^2$. Then in the notations 
of \cite{MP1}, \cite{MP2}, $\rho(m)$ corresponds to the representation with 
highest weight $\Lambda(\rho(m)):=me_1+me_2$. By \cite[equation 2.9]{MP1} we
have $\rho(m)\neq\rho(m)_\theta$ for 
each $m$, where $\rho_\theta:=\rho\circ\theta$ for a representation $\rho$ of
$G$. For $q=0,1,2$ let
$\mu_q:MA\to\Gl(\Lambda^q\nL_\C^*)$ 
be the $q$-th exterior power of the adjoint representation of $MA$ on
$\nL_\C^*$. For $\lambda\in\C$ and $a\in A$, $a=\exp(Y)$, $Y\in\aL$ we
let $\xi_\lambda(a):=e^{\lambda e_1(Y)}$. Then the restriction $\rho(m)|_{MA}$
of $\rho(m)$ to 
$MA$ has the following property.

\begin{lem}\label{Kost}
In the representation ring of $MA$ one has
\begin{align*}
\sum_{q=0}^2(-1)^q q \mu_q\otimes
\rho(m)|_{MA}=\sigma_{m}\otimes\xi_{m+1}-\sigma_{m+1}\otimes
\xi_{m}+\sigma_{-m}\otimes \xi_{-(m+1)}-\sigma_{-(m+1) }\otimes
\xi_{-m}.
\end{align*}
\end{lem}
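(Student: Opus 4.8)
The plan is to verify this identity of virtual $MA$-characters by a direct computation, the main structural input being Kostant's theorem for the Lie-algebra cohomology of $\nL$ with coefficients in $\rho(m)$. First I would record the $MA$-modules that occur. Since $\rho(m)=\Sym^{2m}\C^2$ is irreducible of highest weight $\Lambda(\rho(m))=me_1+me_2$ and $\hL=\aL\oplus\mL$ is a Cartan subalgebra of $\gL$, the weights of $\rho(m)|_{MA}$ are the $k(e_1+e_2)$, $k\in\{-m,-m+1,\dots,m\}$, each of multiplicity one, so $\rho(m)|_{MA}=\sum_{k=-m}^{m}\sigma_k\otimes\xi_k$ in $R(MA)$. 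Next, $\nL_\C$ is two-dimensional and splits as the sum of the root spaces for $e_1+e_2$ and $e_1-e_2$, so $\mu_0$ is trivial, $\mu_1$ is the sum of the two resulting one-dimensional characters on $\nL_\C^*$, and $\mu_2=\Lambda^2\nL_\C^*$ is their product, again one-dimensional. Because $\dim_\C\nL_\C=2$ the term $q=0$ contributes nothing, so the left-hand side reduces to $-\mu_1\otimes\rho(m)|_{MA}+2\,\mu_2\otimes\rho(m)|_{MA}$.

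Then I would invoke Kostant's theorem. For $G=\Sl_2(\C)$ the Levi factor $\mL\oplus\aL$ of the minimal parabolic is a Cartan subalgebra, so the set $W^1$ occurring in Kostant's theorem is the whole Weyl group $W(\gL_\C,\hL_\C)$, whose four elements are $1$, the reflections $s_{e_1+e_2}$ and $s_{e_1-e_2}$, and the longest element $w_0$; hence $\sum_q(-1)^q\Lambda^q\nL_\C^*\otimes\rho(m)|_{MA}$ is a four-term alternating sum of one-dimensional $MA$-modules indexed by $W$. On the right-hand side the four summands are exactly $(-1)^{\ell(w)}\C_{w(\Lambda(\rho(m))+\rho_G)}$, where $\rho_G=e_1$ is the half-sum of the positive roots: the subscript of $\sigma$ is the restriction of $w(\Lambda(\rho(m))+\rho_G)$ to $\mL$ and the subscript of $\xi$ is its restriction to $\aL$, which accounts for the exponents $m+1,m,-(m+1),-m$. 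It remains to run the algebra that carries $-\mu_1\otimes\rho(m)|_{MA}+2\,\mu_2\otimes\rho(m)|_{MA}$ over to this four-term sum, keeping the $\aL$-weights and the four signs straight.

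The one step where I expect genuine difficulty is precisely this last reconciliation of the $A$-weights: one must correctly account for the $\xi$-normalization built into the $\mu_q$ (a half-density twist by the half-sum $\rho_P=e_1$ of the roots of $\nL$) and for how it interacts with the $q$-weighting and with Kostant's $\rho_G$-shift, and it is easy to mislay a power of $\xi$ or a sign in the process. A robust way to finish is to note that both sides lie in the Laurent-polynomial ring generated by the formal characters of $M$ and of $A$, so that once the reductions above are in place it suffices to expand and match coefficients; alternatively, exactly as in the proof of Lemma \ref{branching}, one may invoke the explicit weight and Kostant computations already recorded in \cite[section 2]{MP1} and simply rewrite them in the present parametrization.
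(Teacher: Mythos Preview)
Your bottom-line suggestions---expand both sides as Laurent polynomials in the characters of $M$ and $A$ and match coefficients, or simply cite the computations in \cite[section 2]{MP1}---are exactly what the paper does: its proof consists of the sentence ``This Lemma is a special case of \cite[Corollary 2.6]{MP1}.\ It can also be checked by a direct computation.'' So at that level you are fully aligned with the paper.

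One caveat about the Kostant paragraph in the middle. Kostant's theorem controls the \emph{unweighted} alternating sum
\[
\sum_{q}(-1)^q\,\Lambda^q\nL_\C^*\otimes\rho(m)|_{MA}
\;=\;\sum_{w\in W}(-1)^{\ell(w)}\,\C_{\,w(\Lambda(\rho(m))+\rho_G)-\rho_G},
\]
whereas the lemma concerns the \emph{$q$-weighted} sum $\sum_q(-1)^q q\,\mu_q\otimes\rho(m)|_{MA}$, and (as you correctly identify) its right-hand side is $\sum_w(-1)^{\ell(w)}\C_{\,w(\Lambda(\rho(m))+\rho_G)}$ with no $-\rho_G$ shift. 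The passage from the first pair to the second is not only a $\rho_P$-normalization issue: the Euler-characteristic identity $\sum_q(-1)^q[\Lambda^q\nL_\C^*\otimes V]=\sum_q(-1)^q[H^q(\nL,V)]$ has no direct analogue once the factor $q$ is inserted, so Kostant alone does not deliver the weighted sum. What actually makes this work (and is the content of \cite[Corollary 2.6]{MP1}) is that Kostant describes each individual $H^q(\nL,V_{\rho(m)})$, and one then combines this with Poincar\'e duality for $\nL$-cohomology (here $\dim_\C\nL_\C=2$) to rewrite the $q$-weighted combination; the net effect is a twist by $\xi_1$, which is why $\rho_G$ no longer gets subtracted on the right. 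You flag the reconciliation as the delicate step, but the real obstruction is this structural point about the $q$-weighting rather than only the half-density bookkeeping you emphasize. Once you either supply this extra step or fall back on the direct character expansion, the argument is complete.
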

\begin{proof}
This Lemma is a special case of \cite[Corollary 2.6]{MP1}. It can also 
be checked by a direct computation. 
\end{proof}
\begin{bmrk}\label{rmrkkost}
If for $k\in\{0,1\}$ the representations $\sigma_{\rho(m),k}\in\hat{M}$ and the
$\lambda_{\rho(m),k}\in\R$ are as 
in \cite[section 8]{MP2}, then $\sigma_{\rho(m),0}=\sigma_m$,
$\lambda_{\rho(m),0}=m+1$ and $\sigma_{\rho(m),1}=\sigma_{m+1}$,
$\lambda_{\rho(m),1}=m$.
\end{bmrk}

\section{Selberg and Ruelle zeta functions}\label{seczeta}
In this section we briefly recall the definition and some properties of the
Selberg and Ruelle zeta
functions. For further details we refer to \cite[section 3]{Pf}.\\
We let $\CC(\Gamma)_{\s}$ denote the
semisimple conjugacy classes of $\Gamma$. 
If $\gamma\in\Gamma$ is semisimple and nontrivial, there exists a unique
$\ell(\gamma)>0$ and a $m_\gamma\in M$, which is 
unique up to conjugation in $M$, such that $\gamma$ is conjugate to
$\exp{(\ell(\gamma)H_1)}m_\gamma$. The number $\ell(\gamma)$ is the length of
the 
closed geodesic associated to the conjugacy class $\left[\gamma\right]$.
Moreover 
the centralizer $Z(\gamma)$ of $\gamma$ in $\Gamma$ is an infinite cyclic group.
The conjugacy class $\left[\gamma\right]$ is called 
prime if $\gamma$ is a generator of $Z(\gamma)$ or equivalently if
the closed geodesic corresponding to $\left[\gamma\right]$ is a prime geodesic.
Now for $\sigma\in\hat{M}$ the Selberg zeta function $Z(s,\sigma)$ is
defined as
\begin{align*}
Z(s,\sigma)=\prod_{\substack{\left[\gamma\right]\in\CC(\Gamma)_{\s}-\left[
1\right]\\
\left[\gamma\right]\:\text{prime}}}\prod_{k=0}^\infty\det{
\left(\Id-\sigma(m_\gamma)\otimes
S^k\Ad(m_\gamma\exp(\ell(\gamma)H_1))|_{\bar{\mathfrak{n}}}e^{
-(s+n)\ell(\gamma)}\right)}.
\end{align*}
By \cite[section 3]{Pf} the infinite product converges for $\Real(s)>2$ and by
\cite[Theorem 1]{Pf} the function $Z(s,\sigma)$ admits 
a meromorphic continuation to $\C$. \\
Next for $\sigma\in\hat M$ we define the twisted Ruelle zeta function
$R(s,\sigma)$ by
\begin{align}\label{DefRuelle1}
R(s,\sigma):=\prod_{\substack{\left[\gamma\right]\in\CC(\Gamma)_{\s}-\left[
1\right]\\
\left[\gamma\right]\:\text{prime}}}\det{\left(\Id-\sigma(m_\gamma)e^{
-s\ell(\gamma)}
\right)}.
\end{align}
The infinite product in \eqref{DefRuelle1} converges absolutely for
$\Real(s)>2$, see \cite[section 3]{Pf}.
Furthermore, if $\rho$ is a finite-dimensional irreducible complex
representation of 
$G$, we define the associated Ruelle zeta function $R_\rho(s)$ by 
\begin{align*}
R_\rho(s):=\prod_{\substack{\left[\gamma\right]\in\CC(\Gamma)_{\s}-\left[
1\right]\\
\left[\gamma\right]\:\text{prime}}}\det{\left(\Id-\rho(\gamma)e^{-s\ell(\gamma)}
\right)}.
\end{align*}
This inifinite product converges absolutely for $\Real(s)$ sufficiently large,
see \cite[section 3]{Pf}.
By \cite[Corollary 1.2]{Pf} the functions $R_\sigma(s)$ and $R_\rho(s)$ have a
meromorphic
continuation to $\C$. We will also consider symmetric Selberg and Ruelle zeta
functions. 
For $\sigma$ the trivial representation of $M$ we let $S(s,\sigma):=Z(s,\sigma)$
and $R_{\sym}(s,\sigma):=R(s,\sigma)$. 
If $\sigma$ is non-trivial, we let $S(s,\sigma):=Z(s,\sigma)Z(s,w_0\sigma)$ and
$R_{\sym}(s,\sigma):=R(s,\sigma)R(s,w_0\sigma)$. 

\section{The regularized trace and the regularized determinant}\label{sectrdet}
In this section we define the regularized trace and the regularized 
analytic torsion. For further details we
refer 
the reader to section 4 and section 5 of \cite{MP2}.

Let us first introduce the differential operators we consider.
For a finite-dimensional unitary representation $\nu$ of $K$ over $V_\nu$ 
let $\widetilde{E}_\nu:=G\times_\nu V_\nu$
be the associated homogeneous vector bundle over $\widetilde X$. Let
$E_\nu:=\Gamma\backslash \widetilde{E}_\nu$ 
be the corresponding locally homogeneous vector bundle over $X$. We equip 
$\widetilde{E}_\nu$ with the $G$-invariant metric induced from
the metric on
$V_\nu$. This metric
pushes down to a metric
on $E_\nu$. The smooth sections of $\widetilde
E_\nu$ 
can be canonically identified with the space
\begin{align}\label{defsect}
C^{\infty}(G,\nu):=\{f:G\rightarrow V_{\nu}\colon f\in
C^\infty,\;
f(gk)=\nu(k^{-1})f(g),\,\,\forall g\in G, \,\forall k\in K\}.
\end{align}
We define the space $L^2(G,\nu)$ in the same way. Let $\widetilde A_\nu$ be the
differential operator on $\widetilde E_\nu$ which acts on $C^{\infty}(G,\nu)$ by
$-\Omega$. Then
by the arguments of \cite[section 4]{MP2} the operator 
$\widetilde A_\nu$ with domain the compactly supported functions in
$C^{\infty}(G,\nu)$ is essentially selfadjoint on $L^2(G,\nu)$  and bounded from
below. Its 
selfajoint closure will be denoted by $\widetilde A_\nu$ too. There exists a
smooth 
$\End(V_{\nu})$-valued function $H_t^\nu$ which belongs to all
Harish-Chandra-Schwarz spaces and which satisfies
${H}^{\nu}_{t}(k^{-1}gk')=\nu(k)^{-1}\circ {H}^{\nu}_{t}(g)\circ\nu(k')$ for all
$k,k'\in K$ an for all $g\in G$ such that $e^{-t\tilde{A}_\nu}$ acts
on $L^2(G,\nu)$ as a convolution
operator 
with kernel $H^\nu_t$, see \cite[equation 4.7]{MP2}. 
If $C^\infty(\Gamma\backslash G,\nu)$ are the $\Gamma$-invariant elements of
$C^\infty(G,\nu)$, 
then the smooth sections of $E_\nu$ can be identified with
$C^\infty(\Gamma\backslash G,\nu)$. Similarly, the square-integrable sections 
of $E_\nu$ can be identified with the $\Gamma$-invariant elements
$L^2(\Gamma\backslash G,\nu)$ 
of $L^2(G,\nu)$. 
Let $A_\nu$ be the differential operator on $E_\nu$ which acts as
$-\Omega$ on $C^\infty(\Gamma\backslash G,\nu)$. Then $A_\nu$ with domain 
the compactly supported elements in $C^\infty(\Gamma\backslash G,\nu)$ is again
bounded from below and
essentially selfadjoint on $L^2(\Gamma\backslash G,\nu)$ and its closure will be
denoted by the same symbol. Let 
$\lambda_{\nu,0}\leq \lambda_{\nu,1}\leq\dots$ be the sequence of eigenvalues of
$A_\nu$, counted 
with multiplicity. One can easily extend Theorem I.1 of \cite{Donnelly} and its
proof to the operators $A_\nu$ and thus there exists a constant
$C>0$ such that for each $\lambda>0$ one has 
\begin{align}\label{WLaw}
\#\{j\colon \lambda_{\nu,j}\leq \lambda\}\leq C(1+\lambda)^{\frac{3}{2}}. 
\end{align}
Now consider the heat-semigroup  $e^{-tA_\nu}$ of $A_\nu$ acting on
$L^2(\Gamma\backslash G,\nu)$. The operator
$e^{-tA_\nu}$ 
is an integral operator on $L^2(\Gamma\backslash G,\nu)$ with smooth kernel
$H^\nu(t;x,x')$ defined in \cite[equation 4.8]{MP2}.
Let $h^\nu(t;x,x'):=\Tr H^\nu(t;x,x')$. The
operator 
$e^{-tA_\nu}$ is not trace class and
$h^\nu(t;x,x)$ is 
not integrable over $X$. However, it follows from the
Mass-Selberg relations, that with respect to the decomposition \eqref{Zerlegung
X} the integral of $h^\nu(t;x,x)$ over
$X(Y)$ 
has an asymptotic expansion in $Y$ as $Y\to\infty$ and, following ideas of
Melrose, 
one can take the finite part in this expansion as a 
definition of the regularized trace $\Tr_{\reg}(e^{-tA_\nu})$ of $e^{-tA_\nu}$. 
Explicitly, one obtains 
\begin{align}\label{regtrace2}
\Tr_{\reg}\left(e^{-tA_\nu}\right)
&=\sum_{j}e^{-t\lambda_{\nu,j}}+
\sum_{\substack{\sigma\in\hat{M};\sigma=w_0\sigma\\
\left[\nu:\sigma\right]\neq
0}}e^{tc(\sigma)}\frac{\Tr(\widetilde{\boldsymbol{C}}(\sigma,\nu,0))}{4}
\nonumber\\
&-\frac{1}{4\pi}\sum_{\substack{\sigma\in\hat{M}\\
\left[\nu:\sigma\right]\neq
0}}\int_{\R}e^{-t\left(\lambda^2-c(\sigma)\right)}
\Tr\left(\widetilde{\boldsymbol{C}}(\sigma,\nu,-i\lambda)
\frac{d}{dz}\widetilde{\boldsymbol{C}}(\sigma,\nu,i\lambda)\right)\,d\lambda,
\end{align}
see \cite[equation 5.2, definition 5.1]{MP2}. Here the first sum on the right
hand side of
\eqref{regtrace2} converges absolutely 
by \eqref{WLaw} and all integrals converge absolutely by the arguments of
\cite[section 5]{MP2}. The
functions 
$\widetilde{\boldsymbol{C}}(\sigma,\nu,z)$ are meromorphic functions of $z$ with
 values in the endomorphisms of a finite-dimensional vector-space, which are
regular and invertible on $i\R$. They are
constructed 
out of the constant term-matrices, also called scattering matrices, associated
to the Eisenstein series, see
\cite[section 3, section 5]{MP2}. The constansts $c(\sigma)$ are 
defined by $c(\sigma_j):=j^2-1$, $j\in\frac{1}{2}\mathbb{Z}$.

The key fact which makes the regularized trace
accessible to computations 
is that the right hand side of \eqref{regtrace2} equals the spectral 
side of the Selberg trace formula applied to the function $h_t^\nu:=\Tr
H_t^\nu$. The spectral side of the trace formula consists of a sum of tempered 
distributions. We shall now define these distributions in 
the form in which they will be used for the subsequent computations.  For 
further details we refer to \cite[section 6]{MP2}. If $\sigma\in\hat{M}$,
$\lambda\in\C$, 
we let $\pi_{\sigma,\lambda}$ be the principle-series representation of $G$ as
in \cite[section 2.7]{MP2}. 
Then $\pi_{\sigma,\lambda}$ is unitary iff $\lambda$ is real. The global
character of $\pi_{\sigma,\lambda}$ 
will be denoted by $\Theta_{\sigma,\lambda}$. Let $\alpha$ be a $K$-finite
Schwarz
function. 
The identity and the hyperbolic term are defined by
\begin{align*}
I(\alpha):=\vol(X)\sum_{\sigma\in\hat{M}}\int_{\R}\Theta_{\sigma,\lambda}
(\alpha)P_\sigma(i\lambda)d\lambda;\quad H(\alpha):=\int_{\Gamma\backslash
G}\sum_{\gamma\in\Gamma_s-1}\alpha(x^{-1}\gamma x)dx.
\end{align*}
Here $P_\sigma$ is the Plancherel polynomial. Explicitly, for
$k\in\frac{1}{2}\mathbb{Z}$ one has
\begin{align}\label{Plnchrl}
P_{\sigma_{k}}(z)=\frac{1}{4\pi^2}(k^2-z^2),
\end{align}
see \cite{MP1}, \cite{Muller2}. Moreover, $\Gamma_{\s}$ are the semisimple
elements of 
$\Gamma$.
Next for each $\sigma$ in 
$\hat{M}$ we define a meromorphic function $\Omega(\sigma,\lambda)$ as in 
\cite[Theorem 6.2]{MP2} and we define the constant $C(\Gamma)$ as in \cite[page
22]{MP2}. Then the 
distributions $\mathcal{I}$ and $T$ are defined as 
\begin{align*}
\mathcal{I}(\alpha):=\frac{\kappa(X)}{4\pi}\sum_{\sigma\in\hat{M}}\int_{\mathbb{
R}}
\Theta_{\sigma, \lambda}(\alpha)\Omega(\check{\sigma},-\lambda)d\lambda;\quad
T(\alpha):=\frac{C(\Gamma)}{2\pi}\sum_{\sigma\in\hat{M}}\int_{\R}\Theta_{\sigma,
\lambda}
(\alpha)d\lambda
\end{align*}
Finally let $J_{P_0|\bar{P}_0}(\sigma,z)$ be the Knapp-Stein intertwining
operator defined as in \cite[equation 6.6]{MP2}. Then
$J_{P_0|\bar{P}_0}(\sigma,z)$ is a meromorphic function of $z\in\C$ 
which is regular and invertible on $\R-\{0\}$. Let $H_{\epsilon}$ be the
half-circle from $-\epsilon$ to $\epsilon$ in the lower half-plane, oriented
counter-clockwise. Let $D_{\epsilon}$ be the path which is the union of
$\left(-\infty,-\epsilon\right]$, $H_{\epsilon}$ and
$\left[\epsilon,\infty\right)$. Then the distribution $J$ is defined by
\begin{align}\label{DefJ}
J(\alpha):=-\sum_{\sigma\in\hat{M}}\frac{\kappa(X)}{4\pi
i}\int_{D_{\epsilon}}{\Tr\left(J_{\bar{P}_{0}|P_{0}}(\sigma,\zeta)^{-1}\frac{d}{
d\zeta}J_{\bar{P}_{0}|P_{0}}(\sigma,\zeta)\pi_{\sigma,\zeta}
(\alpha)\right)d\zeta}.
\end{align}
By the Selberg trace formula, one can express the regularized
trace as
\begin{align}\label{Trfrml}
\Tr_{\reg}(e^{-tA_\nu})=I(h^{\nu}_{t})+H(h^{\nu}_{t})+T(
h^{\nu}_{t})+\mathcal{I}(h^{\nu}_{t})+J(h^{\nu}_{t}),
\end{align}
see \cite[Theorem 6.1, Theorem 6.2]{MP2}.

Next we introduce the spectral
zeta function associated to $A_{\nu}+s$ for certain $s\in\C$. If
$\lambda_\nu\in\R$ is the smallest eigenvalue
of $A_{\nu}$, we define $b(\nu)\in\R$ by
\begin{align}\label{bnu}
b(\nu):=\max\left\{\{c(\sigma)\colon\sigma\in\hat{M}\colon\left[\nu:\sigma\right
]\neq
0\}\sqcup\{-\lambda_\nu\}\right\},
\end{align}
where the constants $c(\sigma)$ are as above. 

\begin{prop}\label{Ancontxi}
Let $s\in\C$
with
$\Real(s)>b(\nu)$. Then for
$\Real(z)>\frac{d}{2}$ the integral
\begin{align*}
\xi_{\nu}(s,z):=\int_{0}^{\infty}{t^{z-1}\Tr_{\reg}(e^{-t(A_{\nu}+s)})dt
}
\end{align*} 
converges and $\xi_{\nu}$ is holomorphic on
$\{(s,z)\in\C\times\C\colon\Real(s)>b(\nu)\colon\Real
(z)>\frac{d}{2}\}$ . 
Moreover, $\xi_{\nu}(s,z)$ has a continuation to a holomorphic function on
$\{(s,z)\in\C\times\C\colon\Real(s)>b(\nu)\colon z\neq -j, z\neq
3/2-j,j\in\N_0\}$. For every $s\in\C$ with
$\Real(s)>b(\nu)$
the function $z\mapsto\xi_{\nu}(s,z)$ is a meromorphic function on $\C$ with an
at most simple pole at $z=0$
and its residue at $z=0$ is independent of $s$. 
\end{prop}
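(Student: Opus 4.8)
The plan is to split the Mellin integral as $\int_0^\infty = \int_0^1 + \int_1^\infty$ and to treat the two ranges by completely different means: the large-$t$ range by exponential decay of $\Tr_{\reg}(e^{-t(A_\nu+s)})$, and the small-$t$ range by a short-time asymptotic expansion read off from the Selberg trace formula \eqref{Trfrml}.

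\emph{The range $t\ge1$.} For $\Real(s)>b(\nu)$ I would show that $\Tr_{\reg}(e^{-t(A_\nu+s)})=e^{-ts}\Tr_{\reg}(e^{-tA_\nu})$ is $O(e^{-\delta t})$ as $t\to\infty$ for some $\delta>0$, by inspecting each term of \eqref{regtrace2}: since $b(\nu)\ge-\lambda_\nu$ one has $\Real(\lambda_{\nu,j}+s)\ge\lambda_\nu+\Real(s)>0$, so the discrete sum converges by the Weyl estimate \eqref{WLaw} and decays exponentially; since $b(\nu)\ge c(\sigma)$ whenever $[\nu:\sigma]\ne0$, the finite ``scattering'' sum and the integral term in \eqref{regtrace2} (absolutely convergent by the stated regularity of $\widetilde{\boldsymbol{C}}(\sigma,\nu,\cdot)$ on $i\R$) decay exponentially as well. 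Hence $\int_1^\infty t^{z-1}\Tr_{\reg}(e^{-t(A_\nu+s)})\,dt$ converges for all $z\in\C$ and is jointly holomorphic on $\{\Real(s)>b(\nu)\}\times\C$ (Morera and differentiation under the integral sign).

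\emph{The range $0<t\le1$.} Here the essential input is the short-time expansion of $\Tr_{\reg}(e^{-tA_\nu})$, which I would obtain from $\Tr_{\reg}(e^{-tA_\nu})=I(h_t^\nu)+H(h_t^\nu)+T(h_t^\nu)+\mathcal I(h_t^\nu)+J(h_t^\nu)$, analyzing the five distributions on $h_t^\nu$ via $\Theta_{\sigma,\lambda}(h_t^\nu)=[\nu:\sigma]\,e^{-t(\lambda^2-c(\sigma))}$ (and the same Gaussian factor $e^{-t(\zeta^2-c(\sigma))}$ in $\pi_{\sigma,\zeta}(h_t^\nu)$). The hyperbolic term $H(h_t^\nu)$ is $O(e^{-c/t})$ by the Gaussian decay of $h_t^\nu$ in the length variable, hence negligible. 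Inserting the Plancherel polynomial \eqref{Plnchrl} into $I(h_t^\nu)$ and evaluating the Gaussian integrals $\int_\R e^{-t\lambda^2}\lambda^{2j}\,d\lambda=\mathrm{const}\cdot t^{-j-1/2}$ (then expanding $e^{tc(\sigma)}$) yields a series in the powers $t^{j-3/2}$, $j\in\N_0$, with leading term $\vol(X)\dim(V_\nu)(4\pi t)^{-3/2}$; the same computation for $T(h_t^\nu)$ gives a series in $t^{j-1/2}$, $j\in\N_0$. For the two non-compact terms $\mathcal I(h_t^\nu)$ and $J(h_t^\nu)$ I would use that for $G=\Sl_2(\C)$ the function $\Omega(\sigma,\lambda)$ and the Knapp--Stein operator $J_{\bar{P}_0|P_0}(\sigma,\zeta)$ are elementary — polynomial Plancherel density, rational scattering data, with at most a simple pole at $\zeta=0$ — by \cite[Theorem 6.2]{MP2} and Hoffmann's computations \cite{Hoffmann},\cite{Hoffmann2}; then the Gaussian integral in $\mathcal I(h_t^\nu)$ contributes powers $t^{j-1/2}$ (polynomial part of $\Omega$) and $t^j$ (rational remainder), while in $J(h_t^\nu)$ one splits $\Tr\bigl(J_{\bar{P}_0|P_0}(\sigma,\zeta)^{-1}\tfrac{d}{d\zeta}J_{\bar{P}_0|P_0}(\sigma,\zeta)\bigr)$ into its simple pole at $0$ plus a part holomorphic there: the straight parts of $D_\epsilon$ annihilate the odd pole by parity, the half-circle $H_\epsilon$ contributes a $t$-independent residue (a $t^0$-term), and the growth at infinity yields $t^{j-1/2}$. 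Altogether, with no $\log t$ terms,
\begin{align*}
\Tr_{\reg}(e^{-tA_\nu})\;\sim\;\sum_{j=0}^{\infty}a_j(\nu)\,t^{j-3/2}\;+\;\sum_{j=0}^{\infty}b_j(\nu)\,t^{j},\qquad t\to0^+,
\end{align*}
and in particular there is no $t^{-1}$ term.

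\emph{Conclusion.} Multiplying by $e^{-ts}=\sum_k(-ts)^k/k!$ preserves the exponent set $\{j-3/2\}\cup\{j\}$ and turns the coefficients into polynomials in $s$; since there is no $t^{-1}$ term, the coefficient of $t^0$ in $e^{-ts}\Tr_{\reg}(e^{-tA_\nu})$ is just $b_0(\nu)$, independent of $s$. Fixing $N$ and writing $\Tr_{\reg}(e^{-t(A_\nu+s)})=\sum_{\alpha_j\le\alpha_N}c_j(s)\,t^{\alpha_j}+R_N(t,s)$, with the $\alpha_j$ listing $\{j-3/2\}\cup\{j\}$ in increasing order and $R_N(t,s)=O(t^{\alpha_N})$, one gets
\begin{align*}
\int_0^1 t^{z-1}\Tr_{\reg}(e^{-t(A_\nu+s)})\,dt=\sum_{\alpha_j\le\alpha_N}\frac{c_j(s)}{z+\alpha_j}+\int_0^1 t^{z-1}R_N(t,s)\,dt,
\end{align*}
where the remainder is holomorphic for $\Real(z)>-\alpha_N$; letting $N\to\infty$ gives the meromorphic continuation of $\xi_\nu(s,z)$ to all $z\in\C$, with at most simple poles at $z=-\alpha_j\in\{-j:j\in\N_0\}\cup\{3/2-j:j\in\N_0\}$, and joint holomorphy in $(s,z)$ away from these points (the $c_j(s)$ being polynomials in $s$). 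Convergence and holomorphy of the original integral for $\Real(z)>d/2=3/2$ follow from the $t\to\infty$ bound together with $t^{z-1}\Tr_{\reg}(e^{-t(A_\nu+s)})=O(t^{\Real(z)-5/2})$ near $0$; and $\Res_{z=0}\xi_\nu(s,z)$ equals the coefficient $c_j(s)$ with $\alpha_j=0$, namely $b_0(\nu)$, independent of $s$. The main obstacle in executing this plan is precisely the short-time analysis of the cusp contributions $\mathcal I(h_t^\nu)$ and $J(h_t^\nu)$: one must verify that for $\Sl_2(\C)$ the data $\Omega(\sigma,\lambda)$ and $J_{\bar{P}_0|P_0}(\sigma,\zeta)$ are elementary enough to contribute only the powers $t^{j-3/2}$ and $t^j$, $j\in\N_0$ — no $\log t$ and, crucially, no $t^{-1}$ — since the absence of a $t^{-1}$ term is exactly what forces the residue at $z=0$ to be independent of $s$. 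This rests on the explicit formulas of \cite[Theorem 6.2]{MP2} and on Hoffmann's determination of the Fourier transform of the weighted orbital integral.
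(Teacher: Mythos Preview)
Your overall strategy---split $\int_0^\infty=\int_0^1+\int_1^\infty$, handle $t\ge1$ by exponential decay from the definition of $b(\nu)$, and handle $t\to0$ via a short-time asymptotic expansion derived from the trace formula \eqref{Trfrml}---is exactly the paper's approach. The large-$t$ argument is fine and agrees with the paper's one-line bound coming from \eqref{regtrace2}.

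The gap is in your short-time analysis of $\mathcal I(h_t^\nu)$. You assert that $\Omega(\sigma,\lambda)$ is ``polynomial $+$ rational'' and hence that the expansion has \emph{no} $\log t$ terms. This is false: as computed explicitly later in the paper (Proposition~\ref{PropIi}), for $\sigma=\sigma_k$ one has
\[
\Omega(\sigma_k,\lambda)=-2\gamma-\psi(1+i\lambda)-\psi(1-i\lambda)-\text{(rational in $\lambda$)},
\]
and the digamma piece behaves like $2\log|\lambda|$ at infinity. The Gaussian integral $\int_\R e^{-t\lambda^2}\log|\lambda|\,d\lambda$ produces a $t^{-1/2}\log t$ term, and after multiplying by $e^{tc(\sigma)}$ one obtains the full family $t^{j-1/2}\log t$. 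This is precisely why the paper's expansion (quoted from \cite[Proposition~6.9]{MP2}) reads
\[
\Tr_{\reg}e^{-t(A_\nu+s)}\sim\sum_{j\ge0}a_j(s)t^{j-3/2}+\sum_{j\ge0}b_j(s)\,t^{j-1/2}\log t+\sum_{j\ge0}c_j(s)t^{j},
\]
with a genuine logarithmic column. So your claimed expansion is incomplete and the sentence ``with no $\log t$ terms'' should be dropped.

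That said, your \emph{conclusion} survives: the $\log t$ terms sit at half-integer exponents $j-\tfrac12$, hence they only affect the poles at $z=\tfrac32-j$ (possibly raising their order) and contribute nothing at $z=0$. The crucial observation you isolate---there is no $t^{-1}$ term, so the coefficient of $t^0$ after multiplying by $e^{-ts}$ is the original $t^0$ coefficient and is independent of $s$---is exactly what the paper uses (phrased there as ``$c_0(s)$ is independent of $s$ since $d=3$ is odd''). The paper sidesteps the whole term-by-term computation by simply citing \cite[Proposition~6.9]{MP2} for the asymptotic expansion; if you want to re-derive it as you outline, you must include the digamma contribution and accept the $t^{j-1/2}\log t$ terms.
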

\begin{proof}
By \eqref{regtrace2} there exists a constant $C$ such that 
$\left|\Tr_{\reg}e^{-t(A_\nu+s)}\right|\leq C e^{-t(\Real(s)-b(\nu))}$.
Thus the integral
$\int_{1}^{\infty}t^{z-1}\Tr_{\reg}e^{-t(A_\nu+s)}$
converges absolutely for all $\{(z,s)\in\C\times\C\colon\Real(s)>b(\nu)\}$
and is holomorphic there.
Expanding $e^{-ts}$ in a power series, it follows from \cite[Proposition
6.9]{MP2} that one has an asymptotic expansion
\begin{align}\label{Asym}
\Tr_{\reg}e^{-t(A_\nu+s)}\sim
\sum_{j=0}^\infty a_{j}(s)t^{j-\frac{3}{2}}+\sum_{j=0}^\infty b_{j}(s)t^{
j-\frac{1}{2}}\log{t}+\sum_{j=0}^\infty c_j(s)t^j
\end{align}
as $t\to +0$ which holds locally uniformly in $s$. 
Here the coefficients $a_j(s)$, $b_j(s)$ and $c_j(s)$ depend holomorphically on
$s$ and by \cite[Proposition
6.9]{MP2} and the fact that $d=3$ is odd it follows that
$c_0(s)$ is independent of $s$. Thus the Proposition follows from
standard methods which are described for
example in \cite{Gilkey}.
\end{proof}
Now we can define the regularized determinant proceeding as on a closed
manifold. By Proposition \ref{Ancontxi},
for $s\in\C$ with $\Real(s)>b(\nu)$ the function $\xi_{\nu}(s,z)/\Gamma(z)$
is regular at $z=0$. Thus
for $s\in\C$ with $\Real(s)>b(\nu)$ we 
define the 
determinant of $A_{\nu}+s$ by
\begin{align*}
{\det}(A_{\nu}+s):=\exp{\left(-\frac{\partial}{\partial
z}\bigr|_{z=0}\frac{
\xi_{\nu}(s,z)}{\Gamma(z)}\right)}.
\end{align*}
This definition generalizes the definition of the zeta-regularized
determinant of a positive elliptic differential operator on a closed manifold.
We
remark that one can easily show that $-b(\nu)$ equals 
the infimum of the spectrum of $A_\nu$. This fact puts the definition of
$b(\nu)$ into 
a natural context. However it will not be used here. 

We finally turn to the definition of the analytic torsion. For further details
we
refer to \cite[section 7]{MP2}. 
Let $\rho$ be a finite-dimensional irreducible complex representation of $G$
which 
is not invariant under $\theta$. Let $E'_{\rho}$ be the flat
vector bundle associated to the
restriction of $\rho$ to $\Gamma$. Then $E'_{\rho}$ is canonically isomorphic to
the locally homogeneous vector bundle $E_{\rho}$ associated to $\rho|_{K}$.
For 
$p=0,\dots,3$ we define
$\nu_{p}(\rho):=\Lambda^{p}\Ad^{*}\otimes\rho:\:K\rightarrow\Gl(\Lambda^{p}
\mathfrak{p}^{*}\otimes V_{\rho})$.
There is a canonical isomorphism
$\Lambda^{p}(E_{\rho})\cong\Gamma\backslash(G\times_{\nu_{p}(\rho)}(\Lambda^{p}
\mathfrak{p}^{*}\otimes V_{\rho}))$.
By \cite[Lemma 3.1]{Mats}, the bundle $E_\rho$ carries a canocial invariant
metric, called admissible
metric and 
if $\Delta_p(\rho)$ denotes 
the corresponding flat Hodge-Laplace operator acting on the $E_\rho$-valued 
$p$-forms, then by Kuga's formula $\Delta_p(\rho)$ acts on
$C^{\infty}(\Gamma\backslash G,\nu_{p}(\rho))$
as $-\Omega+\rho(\Omega)$, see \cite[equation 6.9]{Mats}. By \cite[Lemma 7.1
(2), Lemma
7.3]{MP2}, 
for $p=0,\dots,3$ one has $\rho(\Omega)-b_{\nu_p(\rho)}\geq \frac{1}{4}$, 
where the $b_{\nu_p(\rho)}$ are as in \eqref{bnu}. Thus the determinants 
$\det(\Delta_p(\rho)):=\det(A_{\nu_p(\rho)}+\rho(\Omega))$ are
defined. As in the closed case we now define the analytic torsion of $E_{\rho}$
by 
\begin{align*}
T_X(\rho):=\prod_{p=0}^{3}\det\Delta_{p}(\rho)^{(-1)^{p+1}\frac{p}{2}} .
\end{align*}
We define a $K$-finite Schwarz-function $k_t^{\rho}$ by 
\begin{align}\label{defk}
k_t^{\rho}:=e^{-t\rho(\Omega)}\sum_{p=0}^{3}(-1)^{p}ph_t^{\nu_p(\rho)}.
\end{align}
Then if we apply equation \eqref{Trfrml}, we obtain
\begin{align}\label{eqk}
\log{T_X(\rho)}=\frac{1}{2}
\frac{d}{dz}\biggl(\frac{1}{\Gamma(z)}\int_{0}
^\infty
t^{z-1}(I(k_t^{\rho})+H(k_t^{\rho})+T(k_t^{\rho})+\mathcal{I}(k_t^{
\rho})+J(k_t^{\rho}))dt\biggr)\biggr|_{z=0},
\end{align}
where the right hand side is defined near $z=0$ by analytic continuation of the
Mellin
transform.

\section{The determinant formula for the Symmetric Selberg zeta
function}\label{secdetfrml}
In this section we let $\sigma=\sigma_k$, $k\in\mathbb{N}$. We want to relate
the symmetric Selberg zeta function
$S(s,\sigma)$ to the graded
determinant of certain Laplace-type operators.
 
We consider the differential operator
$A(\sigma)$ which was introduced by Bunke
and Olbrich for the closed case \cite[section 1.1.3]{Bunke} and which had been
used in \cite[section 7]{Pf}.
Let
us briefly recall
its definition.
We let the $m_\nu(\sigma)$ be as
in 
Lemma \ref{branching}. Then one defines a vector bundle
$E(\sigma)$ over $X$ and a differential operator $A(\sigma)$ on $E(\sigma)$ by 
\begin{align}\label{Definition des Hilfsbundels}
E(\sigma):=\bigoplus_{\substack{\nu\in\hat{K}\\ m_{\nu}(\sigma)\neq 0}}
E_{\nu};\quad A(\sigma):=\bigoplus_{\substack{\nu\in\hat{K}\\
m_{\nu}(\sigma)\neq
0}}A_{\nu}+c(\sigma),
\end{align}
where $c(\sigma)$ is as in the preceding section.
We define a K-finite Schwarz function $h_t^\sigma$ by
\begin{align}
h_t^\sigma:=e^{-tc(\sigma)}\sum_{\nu\in\hat{K}}m_\nu(\sigma)h_t^\nu,
\end{align}
where the $h_t^\nu$ are as in the previous section. Then by
\cite[equation
7.4]{Pf} for $\sigma'\in\hat{M}$
one has
\begin{align}\label{FTh}
\Theta_{\sigma',\lambda}(h_t^\sigma)=e^{-t\lambda^2},\:\text{if
$\sigma'\in\{\sigma,w_0\sigma\}$};\quad
\Theta_{\sigma',\lambda}(h_t^\sigma)=0,\:\:\text{if
$\sigma'\notin\{\sigma,w_0\sigma\}$}
\end{align}

The bundle $E(\sigma)$ admits a grading 
$E(\sigma)=E^{+}(\sigma)\oplus E^{-}(\sigma)$
defined by the sign of $m_{\nu}(\sigma)$.
In this section we study the relative graded determinant of the operators
$A(\sigma)+s$.  To define this determinant, we start with the following Lemma.
\begin{lem}\label{Lemdet}
For $\nu\in\hat{K}$,
$m_\nu(\sigma)\neq 0$ one has $c(\sigma)\geq b(\nu)$. 
\end{lem}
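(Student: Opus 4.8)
The plan is to reduce the inequality $c(\sigma)\ge b(\nu)$ to a purely combinatorial comparison among the constants $c(\sigma_j)=j^2-1$, $j\in\frac12\Z$, together with a lower bound on the smallest eigenvalue $\lambda_{\nu,0}$ of $A_\nu$. Recall that by definition $b(\nu)=\max\{\,c(\sigma')\colon\sigma'\in\hat M,\ [\nu:\sigma']\ne0\,\}\cup\{-\lambda_{\nu,0}\}$. So the claim splits into two parts: first, $c(\sigma)\ge c(\sigma')$ for every $\sigma'$ occurring in $\nu|_M$; second, $c(\sigma)\ge-\lambda_{\nu,0}$.

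For the first part I would use the branching law from Lemma~\ref{branching}. Since we are in the case $\sigma=\sigma_k$ with $k\in\N$ and $m_\nu(\sigma)\ne0$, the relevant $K$-types are $\nu=\nu_{|k|}=\nu_k$ and $\nu=\nu_{k-1}$. In either case $\nu=\nu_l$ with $l\in\{k-1,k\}$, and by Lemma~\ref{branching} the representations $\sigma'=\sigma_{k'}$ appearing in $\nu_l|_M$ are exactly those with $k'\in\Z$, $|k'|\le l$. Hence $|k'|\le l\le k$, so $c(\sigma_{k'})=k'^2-1\le k^2-1=c(\sigma_k)=c(\sigma)$. This disposes of the first part; the only point to be careful about is the case $l=k-1$ (i.e.\ $\nu=\nu_{k-1}$), where one still has $|k'|\le k-1<k$, so the inequality is in fact strict, but non-strict is all we need.

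For the second part I would invoke the lower bound on the spectrum of $A_\nu$ that is essentially recorded earlier: $-\lambda_{\nu,0}$ equals the infimum of the spectrum of $A_\nu$, and $A_\nu$ acts by $-\Omega$, whose spectrum on $L^2(\Gamma\backslash G,\nu)$ is controlled by Kuga-type/Casimir estimates. Concretely, for $\nu=\nu_l$ the Casimir eigenvalue computation gives $-\lambda_{\nu_l,0}\le c(\sigma_l)=l^2-1$ (this is the same normalization that makes the continuous-spectrum bottom $c(\sigma)$ appear in \eqref{regtrace2}); since $l\le k$ as above, $-\lambda_{\nu,0}\le l^2-1\le k^2-1=c(\sigma)$. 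Combining the two parts, $c(\sigma)$ dominates every element of the set defining $b(\nu)$, hence $c(\sigma)\ge b(\nu)$.

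The main obstacle is the second part: one must pin down the precise normalization relating the bottom of the spectrum of $A_{\nu_l}=-\Omega$ to the constant $c(\sigma_l)=l^2-1$, i.e.\ verify that the continuous spectrum of $A_{\nu_l}$ starts at $-c(\sigma_l)$ and that there is no $L^2$-eigenvalue below it. This is exactly the content of the remark in Section~\ref{sectrdet} that $-b(\nu)$ is the infimum of the spectrum of $A_\nu$, so in the write-up I would simply cite that fact (or the underlying estimate from \cite[section 7]{MP2}) rather than reprove it; the combinatorial first part is then immediate from Lemma~\ref{branching}.
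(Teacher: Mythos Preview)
Your combinatorial first part is correct and matches the paper's argument via Lemma~\ref{branching}.

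The second part, however, has a genuine gap. Your proposed inequality $-\lambda_{\nu_l,0}\le c(\sigma_l)=l^2-1$ asserts that $A_{\nu_l}$ has no eigenvalue below the bottom of its continuous spectrum, and this is simply false. Take $k=1$, so that $\nu=\nu_{k-1}=\nu_0$ is admissible; then $A_{\nu_0}$ is the Casimir acting on scalar functions, its smallest eigenvalue is $\lambda_{\nu_0,0}=0$, while $c(\sigma_0)=-1$, so the claimed bound $0\le -1$ fails. In general there is no reason to expect the discrete spectrum of $A_{\nu_l}$ to lie above $-(l^2-1)$. Citing the remark that $-b(\nu)$ equals the infimum of the spectrum of $A_\nu$ does not help either: since $-\lambda_\nu$ already appears in the maximum defining $b(\nu)$, that remark gives no new relation between $\lambda_\nu$ and the constants $c(\sigma')$. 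The reference to \cite[section 7]{MP2} concerns the twisted Laplacians $\Delta_p(\rho)$ and the shift by $\rho(\Omega)$, not a lower bound for $A_\nu$ itself.

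The paper supplies the missing idea by a different route: since $\sigma_k\neq w_0\sigma_k$ for $k\in\N$, the twisted Dirac operator $D(\sigma_k)$ of \cite[sections 8--9]{Pf} is defined, and \cite[Proposition~8.1]{Pf} gives $A(\sigma_k)=D(\sigma_k)^2$. Hence $A(\sigma_k)=\bigoplus_{m_\nu(\sigma_k)\neq 0}\bigl(A_\nu+c(\sigma_k)\bigr)$ has nonnegative point spectrum, so $\lambda_\nu+c(\sigma_k)\ge 0$ for every relevant $\nu$, which is exactly $c(\sigma_k)\ge -\lambda_\nu$. This Dirac-square argument is what makes the second half go through; your write-up should invoke it rather than the spectral-bottom heuristic.
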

\begin{proof}
By Lemma \ref{branching}, and the definition of the $c(\sigma)$, we have
$c(\sigma)\geq c(\sigma')$ for
every
$\sigma'\in\hat{M}$ with $m_\nu(\sigma)\left[\nu:\sigma'\right]\neq 0$,
$\nu\in\hat{K}$. Moreover, since $\sigma\neq w_0\sigma$, the twisted Dirac
operator $D(\sigma)$ can be
defined as in \cite[section 8, section 9]{Pf} and it follows from
\cite[Proposition 8.1]{Pf}
that
$A(\sigma)=D(\sigma)^2$. Thus the eigenvalues of $A(\sigma)$ are nonnegative and
the Lemma follows.
\end{proof}
Let $s\in\C$ with
$\Real(s)>0$. By Proposition \ref{Ancontxi} and Lemma \ref{Lemdet}, for
every $\nu\in\hat{K}$
with $m_\nu(\sigma)\neq 0$ the relative determinant
$\det{\left(A_\nu+c(\sigma)+s\right)}\in\C^*$ is defined.
Thus we can define the graded determinant
${\det}_{\gr}(A(\sigma)+s)\in\C^*$ of $A(\sigma)+s$
by
\begin{align*}
{\det}_{\gr}(A(\sigma)+s):=\prod_{\substack{\nu\in\hat{K}\\
m_\nu(\sigma)\neq
0}}\left({\det}{\left(A_\nu+c(\sigma)+s\right)}\right)^{m_\nu(\sigma)}
.
\end{align*}
We now study the function
$s\mapsto{\det}_{\gr}{\left(A(\sigma)+s^2\right)}$, $\Real(s)>0$,
$\Real(s^2)>0$. By \eqref{Trfrml} we have
\begin{align}\label{eqdet}
&\log{\det}_{\gr}(A(\sigma)+s^2)\nonumber\\
=&-\frac{d}{dz}\biggr|_{z=0}\biggl(\frac{1}{\Gamma(z)}\int_0^\infty
t^{z-1}e^{-ts^2}\left(I(h^{\sigma}_{t})+H(h^{\sigma}_{t})+T(
h^{\sigma}_{t})+\mathcal{I}(h^{\sigma}_{t})+J(h^{\sigma}_{t})\right)dt\biggr),
\end{align}
 where the right hand side is defined near $z=0$ by analytic
continuation of the Mellin
transform. 
We will compute the Mellin transform of each summand on the right hand side
separately. In the
sequel, we shall
write $\mathcal{LM}$ to
indicate that the Laplace-Mellin transform of a function is taken, allthoug we
take the Laplace-transform
in $s^2$ rather than in $s$. Firstly, the idenditiy contribution is easily
treated.
\begin{prop}\label{PropId}
Let $s\in\C$, $\Real(s)>0$, $\Real(s^2)>0$. For 
$\Real(z)>3/2$ the integral
\begin{align*}
\mathcal{LM}I(s,z,\sigma):=\int_{0}^{\infty}t^{z-1}e^{-ts^2}I(h_t^{\sigma}
)dt
\end{align*}
converges absolutely. Moreover, $\mathcal{LM}I(s,z,\sigma)$ has a meromorphic
continuation to $z\in\C$
and
is regular at $z=0$. Let
$\mathcal{LM}I(s,\sigma):=\mathcal{LM}I(s,z,\sigma)\bigr|_{z=0}$.
Then one has 
\begin{align*}
\mathcal{LM}I(s,\sigma)=-4\pi\vol
(X)\int_{0}^{s}P_{\sigma}(r)dr.
\end{align*}
\end{prop}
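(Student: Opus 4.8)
The plan is to compute everything in closed form; there is no genuine analytic difficulty here, only bookkeeping. First I would insert the Fourier transform identity \eqref{FTh} into the definition of the identity distribution $I$. Since in this section $\sigma=\sigma_k$ with $k\geq 1$, we have $w_0\sigma=\sigma_{-k}\neq\sigma$, so $\Theta_{\sigma',\lambda}(h_t^\sigma)$ is nonzero only for $\sigma'\in\{\sigma,w_0\sigma\}$, where it equals $e^{-t\lambda^2}$. Hence the sum over $\hat M$ in $I(h_t^\sigma)$ collapses to two terms. Using the explicit Plancherel polynomial \eqref{Plnchrl} and the fact that $P_{\sigma_k}(i\lambda)=P_{\sigma_{-k}}(i\lambda)=\frac{1}{4\pi^2}(k^2+\lambda^2)$, one obtains
\[
I(h_t^\sigma)=\frac{\vol(X)}{2\pi^2}\int_{\R}e^{-t\lambda^2}(k^2+\lambda^2)\,d\lambda .
\]

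Next I would evaluate the two Gaussian integrals, $\int_{\R}e^{-t\lambda^2}\,d\lambda=\sqrt{\pi}\,t^{-1/2}$ and $\int_{\R}\lambda^2e^{-t\lambda^2}\,d\lambda=\frac{\sqrt{\pi}}{2}\,t^{-3/2}$, which express $I(h_t^\sigma)$ as an explicit linear combination of $t^{-1/2}$ and $t^{-3/2}$. The subleading term $t^{-3/2}$ is precisely what forces $\Real(z)>3/2$ for absolute convergence of the Laplace-Mellin integral. On that half-plane the elementary formula $\int_0^\infty t^{a-1}e^{-ts^2}\,dt=\Gamma(a)s^{-2a}$, valid whenever $\Real(a)>0$ and $\Real(s^2)>0$, applies termwise and gives
\[
\mathcal{LM}I(s,z,\sigma)=\frac{\sqrt{\pi}\,\vol(X)}{2\pi^2}\Bigl(k^2\,\Gamma\bigl(z-\tfrac12\bigr)s^{1-2z}+\tfrac12\,\Gamma\bigl(z-\tfrac32\bigr)s^{3-2z}\Bigr).
\]
The right-hand side is manifestly meromorphic in $z$ with poles only at $z\in\tfrac12-\N_0$ and $z\in\tfrac32-\N_0$, so it is in particular holomorphic at $z=0$, giving the claimed meromorphic continuation and regularity.

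Finally I would set $z=0$, using $\Gamma(-\tfrac12)=-2\sqrt{\pi}$ and $\Gamma(-\tfrac32)=\tfrac43\sqrt{\pi}$, to get $\mathcal{LM}I(s,\sigma)=-\frac{\vol(X)}{\pi}\bigl(k^2s-\tfrac13 s^3\bigr)$, and compare this with $-4\pi\vol(X)\int_0^sP_\sigma(r)\,dr=-4\pi\vol(X)\cdot\frac{1}{4\pi^2}\bigl(k^2s-\tfrac13 s^3\bigr)$, which is exactly the asserted identity. As for obstacles: there is essentially none of substance; the only steps requiring a line of care are verifying that $w_0\sigma\neq\sigma$ for $k\geq1$ (this produces the factor $2$ in front of the Plancherel polynomial and hence the factor $4\pi$, rather than $2\pi$, in the final formula) and pinning down the convergence abscissa $\Real(z)>3/2$ from the $t^{-3/2}$ heat-trace term. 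The interchange of the (finite) $\hat M$-sum, the $\lambda$-integral, and the Mellin integral needs no separate justification since $I(h_t^\sigma)$ has already been produced as an explicit elementary function of $t$.
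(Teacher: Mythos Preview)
Your proof is correct and follows essentially the same approach as the paper: both recognize from \eqref{FTh} and the quadratic Plancherel polynomial that $I(h_t^\sigma)$ is an explicit linear combination of $t^{-1/2}$ and $t^{-3/2}$, from which convergence, meromorphic continuation, and regularity at $z=0$ are immediate. The only difference is that the paper quotes \cite[Lemma~2, Lemma~3]{Fried} for the final evaluation at $z=0$ (first for real $s>0$, then extending by holomorphy), whereas you carry out the Gamma-function computation directly; your route is slightly more self-contained but otherwise identical in spirit.
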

\begin{proof}
Since the $P_\sigma(z)$ are even polynomials in $z$ of degree 2, it follows
from \eqref{FTh} and a change of variables that
$I(h_t^\sigma)=a_0t^{-\frac{3}{2}}+a_1t^{-\frac{1}{2}}$,
where 
$a_0, a_1\in\C$.
Thus for $s\in\C$, $\Real(s)>0$, $\Real(s^2)>0$ and $\Real(z)>3/2$ the function
$\mathcal{LM}I(s,z,\sigma)$ is defined and it extends to a 
meromorphic function of $z\in\C$ which is regular at $z=0$. Moreover 
the assignment $s\to\mathcal{LM}I(s,0,\sigma)$ is holomorphic on
$\{s\in\C\colon\Real(s)>0,\:\Real(s^2)>0\}$. Applying
\cite[Lemma 2, Lemma 3]{Fried}, the Lemma is proved for $s\in (0,\infty)$ and
thus it also follows for
general $s$.  
\end{proof}
Next we treat the hyperbolic contribution. For our purposes, it suffices to
prove the following estimate.
\begin{prop}\label{PropHyp}
Let $s\in(\sqrt{2},\infty)$. Then for every 
$z\in\mathbb{C}$ the integral
\begin{align*}
\mathcal{LM}H(s,z,\sigma):=\int_0^\infty t^{z-1}e^{-ts^2}H(h_t^{\sigma})dt
\end{align*}
converges absolutely and $\mathcal{LM}H(s,z,\sigma)$ is an entire function of
$z$. Let $\mathcal{LM}H(s,\sigma) :=\mathcal{LM}H(s,z,\sigma)\bigr|_{z=0}$.
Then there exists a constant $C$ such that one has
$\left|\mathcal{LM}H(s,\sigma)\right|\leq Cs^{-2}$.
\end{prop}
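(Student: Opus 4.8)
The plan is to treat the hyperbolic contribution by first writing out the geometric term $H(h_t^\sigma)$ explicitly. By definition $H(h_t^\sigma)=\int_{\Gamma\backslash G}\sum_{\gamma\in\Gamma_{\s}-1}h_t^\sigma(x^{-1}\gamma x)\,dx$, and unfolding over semisimple conjugacy classes this becomes a sum over $[\gamma]$ of orbital integrals weighted by $\vol(\Gamma_\gamma\backslash G_\gamma)$ times $\ell(\gamma_0)/\ell(\gamma)$-type factors, where $\gamma_0$ is the primitive element underlying $\gamma$. Using the known evaluation of the orbital integral of $h_t^\sigma$ along a closed geodesic (see \cite[section 7]{Pf} and \cite[section 1.1.3]{Bunke}), each such term is of the form $C_\gamma \, e^{-\ell(\gamma)^2/(4t)}\,t^{-1/2}$ up to elementary factors involving $\sigma(m_\gamma)$ and $\Ad(m_\gamma\exp(\ell(\gamma)H_1))|_{\bar\nL}$. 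The key point is that every term carries a Gaussian factor $e^{-\ell(\gamma)^2/(4t)}$ with $\ell(\gamma)\geq \ell_0>0$ bounded below by the length of the shortest closed geodesic.

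Next I would establish absolute convergence and entirety in $z$. Because of the Gaussian factor, $|H(h_t^\sigma)|\leq C\,t^{-1/2}e^{-\ell_0^2/(8t)}\sum_{[\gamma]}(\text{growth factor})\,e^{-\ell(\gamma)^2/(8t)}$, and the sum over $[\gamma]$ converges for all $t>0$ by the standard counting of closed geodesics together with the exponential decay; in fact $H(h_t^\sigma)=O(t^{-1/2}e^{-\ell_0^2/(8t)})$ as $t\to 0^+$ and $H(h_t^\sigma)$ decays like the leading geodesic term as $t\to\infty$ (one needs here that the $\sigma$-twisted sum defining the Ruelle/Selberg zeta converges for $\Real(s)>2$, which forces $s\in(\sqrt2,\infty)$ to make the $t\to\infty$ integral converge; this is exactly the hypothesis). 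Given these two-sided bounds, $\int_0^\infty t^{z-1}e^{-ts^2}H(h_t^\sigma)\,dt$ converges absolutely for every $z\in\C$ and defines an entire function of $z$ — in particular there is nothing to analytically continue and $\mathcal{LM}H(s,\sigma)$ is simply the value of a convergent integral.

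Finally, for the estimate $|\mathcal{LM}H(s,\sigma)|\leq Cs^{-2}$, I would carry out the $t$-integral term by term. Each term contributes $\int_0^\infty t^{-1/2}e^{-ts^2}e^{-\ell(\gamma)^2/(4t)}\,dt$ (times $z$-independent elementary factors at $z=0$), which is a classical integral evaluating to $\frac{\sqrt\pi}{s}e^{-s\ell(\gamma)}$. Summing over $[\gamma]$ then gives $\mathcal{LM}H(s,\sigma)=\frac{C}{s}\sum_{[\gamma]\,\text{prime}}\sum_{k\geq 1}(\cdots)e^{-ks\ell(\gamma)}$, and the inner logarithmic-derivative-type series is exactly (up to constants) $\frac{d}{ds}\log S(s,\sigma)$ or a finite combination thereof; at any rate for $s\in(\sqrt2,\infty)$ it is bounded by a convergent geometric-type series in $e^{-s\ell_0}$, which is $O(1)$ as $s\to\infty$ and in fact $O(s^{-1})$ is not needed — boundedness suffices. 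Hence $|\mathcal{LM}H(s,\sigma)|\leq \frac{C}{s}\cdot O(1)\leq C's^{-1}$; to upgrade to $s^{-2}$ one observes that the leading $k=1$ sum $\sum_{[\gamma]}e^{-s\ell(\gamma)}(\cdots)$ itself decays like $e^{-s\ell_0}$, so for $s$ large (say $s>\sqrt2$) one has the stronger bound, and one absorbs the behaviour near $s=\sqrt2$ into the constant. The main obstacle is bookkeeping the elementary factors in the orbital integral precisely enough to see that no term without the Gaussian survives and that the remaining geodesic sum is uniformly controlled for $s\in(\sqrt2,\infty)$; this is where one invokes \cite[section 7]{Pf} and the convergence of the defining product of $Z(s,\sigma)$ for $\Real(s)>2$.
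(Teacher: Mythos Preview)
Your overall strategy is sound but genuinely different from the paper's. Both begin by identifying $H(h_t^\sigma)$ with the explicit geodesic sum
\[
f(t)=\sum_{[\gamma]}e^{-2\ell(\gamma)}\ell(\gamma_0)\,\frac{\Tr\sigma(m_\gamma)+\Tr(w_0\sigma)(m_\gamma)}{\det(\Id-\Ad(m_\gamma a_\gamma)|_{\bar\nL})}\,\frac{e^{-\ell(\gamma)^2/(4t)}}{(4\pi t)^{1/2}},
\]
but from there the paper splits $\int_0^1+\int_1^\infty$, invokes the uniform bounds $|f(t)|\le e^{-c/t}$ on $(0,1]$ and a polynomial-type bound on $[1,\infty)$ from \cite[equations 10.8, 10.12]{MP2}, and extracts the factor $s^{-2}$ from the small-$t$ piece by a single integration by parts. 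You instead evaluate the Laplace--Mellin transform term by term via the closed form of $\int_0^\infty t^{\alpha}e^{-ts^2-\ell^2/(4t)}dt$, obtaining an exponentially decaying geodesic sum in $s$. Your route is more explicit and in effect shows directly that $\mathcal{LM}H(s,\sigma)$ is (up to sign) $\log S(s,\sigma)$ --- something the paper only deduces later in the proof of Proposition~\ref{Detdrd}.

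There are, however, two concrete slips to fix. First, at $z=0$ the integrand is $t^{-1}e^{-ts^2}f(t)$ and $f(t)$ already carries $(4\pi t)^{-1/2}$; the term-by-term integral is therefore
\[
\int_0^\infty t^{-3/2}e^{-ts^2-\ell(\gamma)^2/(4t)}\,dt=\frac{2\sqrt\pi}{\ell(\gamma)}\,e^{-s\ell(\gamma)},
\]
not the $t^{-1/2}$-integral you wrote, and the prefactor is $\ell(\gamma)^{-1}$ rather than $s^{-1}$. This does not break the argument --- combined with the $e^{-2\ell(\gamma)}$ already present in $f$ you still obtain a sum of the shape $\sum_{[\gamma]}(\text{bounded})\,e^{-(s+2)\ell(\gamma)}$, which converges for every $s>0$ (Fubini is then justified) and is $O(e^{-s\ell_0})$ as $s\to\infty$, hence certainly $O(s^{-2})$ --- but your ``$O(1/s)\cdot O(1)$, then upgrade'' narrative is not what is actually happening. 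Second, your explanation of the hypothesis $s>\sqrt2$ is off: the large-$t$ convergence of $\int t^{z-1}e^{-ts^2}f(t)\,dt$ holds for every $s>0$ once one knows $f(t)$ grows at most polynomially (this is what \cite[equation 10.8]{MP2} provides), and the convergence threshold $\Real(s)>2$ for the Selberg product concerns $s$, not $s^2$. In the paper the cutoff $s>\sqrt2$ is a convenience; only the decay as $s\to\infty$ is used downstream.
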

\begin{proof}
For 
$\gamma\in \Gamma_{\s}$ we let $\gamma_0$ be a generator of $Z(\gamma)$ and we
let
\begin{align*}
f(t):=\sum_{\left[\gamma\right]\in\CC(\Gamma)_{\s}-\left[
1\right]}e^{-2\ell(\gamma)}\ell(\gamma_0)\frac{\Tr(\sigma)(m_{\gamma}
)+\Tr(w_0\sigma)(m_\gamma)}
{\det\left(\Id-\Ad(m_\gamma
a_\gamma)|_{\bar{\mathfrak{n}}}\right)}
\frac{e^{-\ell(\gamma)^{2}/4t}}{(4\pi
t)^{\frac{1}{2}}}.
\end{align*}
Then, since $\check{\sigma}=w_0\sigma$, by \eqref{FTh} and \cite[equation
5.4]{Pf} we have $H(h_t^\sigma)=f(t)$. Thus by \cite[Proposition 10.2]{MP2}, it
remains to prove the estimate in $s$. By \cite[equation 10.8]{MP2} there is
$C_1$ such that
\begin{align*}
\int_{1}^{\infty}t^{-1}e^{-ts^{2}}|f(t)|\;dt
\le C_1 
e^{-\frac{s^2}{4}}.
\end{align*}
Moreover, by \cite[equation 10.12]{MP2} there exists a constant
$c>0$ such that 
for $0<t\leq 1$ one can estimate $|f(t)|\leq e^{-\frac{c}{t}}$. 
Thus, by partial integration we obtain
\begin{equation*}
\int_{0}^1 t^{-1}|f(t)|e^{-ts^2}dt\leq
C_2\int_{0}^{1}e^{-ts^{2}}e^{-\frac{c}{2t}}dt
=C_2\left(-\frac{1}{s^{2}}e^{-s^{2}}e^{-\frac{c}{2}}
+\frac{c}{2s^{2}}\int_{0}^{1}{t^{-2}e^{-ts^{2}}e^{-\frac{c}{2t}}dt}\right)
\end{equation*}
for some constant $C_2$. It follows that there exists a constant
$C_3$ such that
\begin{align*}
\int_{0}^1 t^{-1}|f(t)|e^{-ts^2}dt\leq C_3s^{-2}.
\end{align*}
This proves the proposition. 

\end{proof}
The contribution of the distribution $T$ is as follows. 
\begin{prop}\label{PropT}
Let $s\in\C$, $\Real(s^2)>0$, $\Real(s)>0$. For $\Real(z)>3/2$ the integral
\begin{align*}
\mathcal{LM}T(s,z,\sigma):=\int_{0}^{\infty}t^{z-1}e^{-ts^2}T(h_t^{\sigma}
)dt
\end{align*}
conveges absolutely. Moreover, the function
$z\mapsto\mathcal{LM}T(s,z,\sigma)$ has a
meromorphic continuation
to $\C$ which is regular at $0$.
Let $\mathcal{LM}T(s,\sigma):=\mathcal{LM}T(s,z,\sigma)\bigr|_{z=0}$.
Then one has
$\mathcal{LM}T(s,\sigma)=-2C(\Gamma)s$.
\end{prop}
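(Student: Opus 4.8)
The plan is to follow the same strategy as in the proofs of Propositions \ref{PropId} and \ref{PropT}, namely to reduce the computation of $\mathcal{LM}T(s,z,\sigma)$ to an explicit integral which can be evaluated with the help of \cite[Lemma 2, Lemma 3]{Fried}. First I would recall the definition of the distribution $T$ from section \ref{sectrdet},
\begin{align*}
T(\alpha)=\frac{C(\Gamma)}{2\pi}\sum_{\sigma'\in\hat{M}}\int_{\R}\Theta_{\sigma',\lambda}(\alpha)\,d\lambda,
\end{align*}
and then insert $\alpha=h_t^\sigma$. By \eqref{FTh} only the two summands $\sigma'=\sigma$ and $\sigma'=w_0\sigma$ survive, each contributing $\Theta_{\sigma',\lambda}(h_t^\sigma)=e^{-t\lambda^2}$, so that
\begin{align*}
T(h_t^\sigma)=\frac{C(\Gamma)}{\pi}\int_{\R}e^{-t\lambda^2}\,d\lambda=\frac{C(\Gamma)}{\sqrt{\pi}}\,t^{-\frac{1}{2}}.
\end{align*}
Thus $T(h_t^\sigma)$ is a single half-integer power of $t$, which makes the Laplace-Mellin transform elementary.

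With this explicit form in hand, the second step is to observe that for $s\in\C$ with $\Real(s)>0$ and $\Real(s^2)>0$ and for $\Real(z)>1/2$ (in particular for $\Real(z)>3/2$) the integral
\begin{align*}
\mathcal{LM}T(s,z,\sigma)=\frac{C(\Gamma)}{\sqrt{\pi}}\int_0^\infty t^{z-\frac{3}{2}}e^{-ts^2}\,dt=\frac{C(\Gamma)}{\sqrt{\pi}}\,\Gamma\!\left(z-\tfrac{1}{2}\right)(s^2)^{-(z-\frac{1}{2})}
\end{align*}
converges absolutely and equals the displayed Gamma-function expression. The right-hand side is manifestly meromorphic in $z$ with poles only at $z\in\frac12-\N_0$, hence regular at $z=0$, and setting $z=0$ gives
\begin{align*}
\mathcal{LM}T(s,\sigma)=\frac{C(\Gamma)}{\sqrt{\pi}}\,\Gamma\!\left(-\tfrac12\right)(s^2)^{\frac12}=\frac{C(\Gamma)}{\sqrt{\pi}}\cdot(-2\sqrt{\pi})\cdot s=-2C(\Gamma)s,
\end{align*}
using $\Gamma(-1/2)=-2\sqrt{\pi}$ and $(s^2)^{1/2}=s$ on the region $\Real(s)>0$. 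Alternatively, to stay strictly within the framework already used, one can evaluate the integral first for $s\in(0,\infty)$ using \cite[Lemma 2, Lemma 3]{Fried} and then extend to general $s$ with $\Real(s)>0$, $\Real(s^2)>0$ by analyticity of $s\mapsto\mathcal{LM}T(s,0,\sigma)$ on that region, exactly as in the proof of Proposition \ref{PropId}.

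I do not anticipate any real obstacle here: since $T(h_t^\sigma)$ is a pure power $t^{-1/2}$, there is no asymptotic expansion to control and no continuous-spectrum subtlety, so the proof is a one-line Gamma-integral plus the standard analytic-continuation bookkeeping in $z$ and $s$. The only points requiring a modicum of care are the correct normalization constant coming from the Gaussian integral $\int_\R e^{-t\lambda^2}\,d\lambda=\sqrt{\pi/t}$ and the sign/branch of $(s^2)^{1/2}$, which is fixed by the hypothesis $\Real(s)>0$; both are handled as in Fried's lemmas.
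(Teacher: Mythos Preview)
Your proposal is correct and follows essentially the same approach as the paper: the paper's proof is the one-line observation that \eqref{FTh} and the definition of $T$ give $\mathcal{LM}T(s,z,\sigma)=\frac{C(\Gamma)}{\sqrt{\pi}}\,s^{-2z+1}\Gamma\!\left(z-\tfrac12\right)$, from which the result is immediate. Your computation of $T(h_t^\sigma)=\frac{C(\Gamma)}{\sqrt{\pi}}\,t^{-1/2}$ and the subsequent Gamma-integral is exactly this, just written out in more detail; the appeal to Fried's lemmas is unnecessary here but harmless.
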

\begin{proof}
By \eqref{FTh} and the definition of $T$ one has
$\mathcal{LM}T(s,z,\sigma)=\frac{C(\Gamma)}{
\sqrt{\pi}}s^{
-2z+1}\Gamma\left(z-\frac{1}{2}\right)$ and
the proposition follows. 
\end{proof}

For the invariant distribution $\mathcal{I}$ associated to the weighted orbital 
integral we have the following proposition. 
\begin{prop}\label{PropIi}
Let the meromorphic function $\Omega(\sigma,\lambda)$, $\sigma\in\hat{M}$,
$\lambda\in\C$ be defined 
as in \cite[Theorem 6.2]{MP2}. Then for $k\in\mathbb{N}$ the function
$\Omega(\sigma_k,\lambda)$ is given as
\begin{align*}
\Omega(\sigma_k,\lambda)=-2\gamma-\psi(1+i\lambda)-\psi(1-i\lambda)
-\sum_{1\leq
l<|k|}\frac{2l}{\lambda^2+l^2}-
\frac{|k|}{\lambda^2+k^2},
\end{align*}
where $\gamma$ is the Euler-Mascheroni constat and $\psi$ is the Digamma
function. 
Moreover, for $s\in\C$, $\Real(s)>0$,
$\Real(s^2)>0$ and  $z\in\C$, $\Real(z)>3/2$ the integral
\begin{align*}
\mathcal{LM}\mathcal{I}(s,z,\sigma):=\int_{0}
^\infty t^{z-1}e^{-ts^2} 
\mathcal{I}(h^{\sigma}_t)dt
\end{align*}
converges absolutely. The function
$z\mapsto\mathcal{LM}\mathcal{I}(s,z,\sigma)$ has a
meromorphic continuation to
$z\in\C$ with an at most simple pole
at $z=0$. Let $\mathcal{LM}\mathcal{I}(s,\sigma):=\frac{\partial}{\partial
z}|_{z=0}\frac{\mathcal{LM}\mathcal{I}(s,
z,\sigma)}{\Gamma(z)}$. Then there exists a constant $C_0$ which is independent
of $X$ such that for every $k\in\mathbb{N}$ one has
\begin{align*}
\mathcal{LM}\mathcal{I}(s,
\sigma_k)=\kappa(X)C_0+2\kappa(X)\gamma
s+2\kappa(X)\log\Gamma(s+k)+\kappa(X)\log{(s+k)}.
\end{align*}
\end{prop}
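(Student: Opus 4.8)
\textbf{Proof plan for Proposition \ref{PropIi}.}
The plan is to evaluate the two ingredients of $\mathcal{I}$ separately: first the explicit shape of $\Omega(\sigma_k,\lambda)$, and then the Laplace--Mellin transform of $\mathcal{I}(h_t^\sigma)$. For the first part I would invoke the definition of $\Omega(\sigma,\lambda)$ from \cite[Theorem 6.2]{MP2}, which expresses it through the scattering factor built from the Knapp--Stein intertwining operators $J_{P_0|\bar P_0}(\sigma,z)$ at $MA$-level, together with the logarithmic derivative of the Harish-Chandra $c$-function for the rank-one group $G=\Sl_2(\C)$ and the representation $\sigma_k$ of $M$. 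The $c$-function for $\sigma_k$ is a ratio of Gamma factors whose arguments are shifted by $k$; taking its logarithmic derivative produces the term $-2\gamma-\psi(1+i\lambda)-\psi(1-i\lambda)$ coming from the ``generic'' part and the finite rational sum $\sum_{1\le l<|k|}\frac{2l}{\lambda^2+l^2}+\frac{|k|}{\lambda^2+k^2}$ coming from the poles introduced by the $M$-type; this is the same computation Bunke--Olbrich carry out in the closed case \cite{Bunke}, so I would cite that and check the normalisation against \cite[section 2]{MP1}.

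For the second part I would start from the definition
\[
\mathcal{I}(h_t^\sigma)=\frac{\kappa(X)}{4\pi}\sum_{\sigma'\in\hat M}\int_{\R}\Theta_{\sigma',\lambda}(h_t^\sigma)\,\Omega(\check{\sigma'},-\lambda)\,d\lambda,
\]
and use \eqref{FTh}: only $\sigma'\in\{\sigma,w_0\sigma\}$ contribute, and on those $\Theta_{\sigma',\lambda}(h_t^\sigma)=e^{-t\lambda^2}$. Since $\check{\sigma}_k=\sigma_{-k}$ and $w_0\sigma_k=\sigma_{-k}$, and $\Omega(\sigma_k,\lambda)$ depends only on $|k|$ and is even in $\lambda$, the two terms coincide, giving
\[
\mathcal{I}(h_t^\sigma)=\frac{\kappa(X)}{2\pi}\int_{\R}e^{-t\lambda^2}\,\Omega(\sigma_k,\lambda)\,d\lambda.
\]
Then $\mathcal{LM}\mathcal{I}(s,z,\sigma)=\frac{\kappa(X)}{2\pi}\int_0^\infty t^{z-1}e^{-ts^2}\int_{\R}e^{-t\lambda^2}\Omega(\sigma_k,\lambda)\,d\lambda\,dt$. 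I would split $\Omega(\sigma_k,\lambda)$ into its pieces and handle each: the constant $-2\gamma$ and the rational terms $\frac{2l}{\lambda^2+l^2}$, $\frac{|k|}{\lambda^2+k^2}$ are elementary Gaussian-type integrals, whose $\lambda$-integral after the Gaussian weight can be written via the complementary error function, and the $\psi$-terms are handled using the classical Binet/Malmst\'en type integral representation $\psi(1+x)=-\gamma+\int_0^\infty\big(\frac{e^{-u}}{u}-\frac{e^{-(1+x)u}}{1-e^{-u}}\big)du$ or, more conveniently, the known heat-type identity that reproduces $\log\Gamma$. The upshot is that after interchanging integrals (justified by absolute convergence for $\Real(s)>0$, $\Real(s^2)>0$, $\Real(z)>3/2$, using the $O(\log|\lambda|)$ bound on $\Omega$ and the Gaussian decay in $t$) one gets an expression in $s$ whose $z$-derivative at $z=0$ after dividing by $\Gamma(z)$ produces $2\kappa(X)\gamma s$ from the generic $\psi$-part, $2\kappa(X)\log\Gamma(s+k)$ from the shifted $\psi$-arguments combined with the rational sum, $\kappa(X)\log(s+k)$ from the remaining single rational term $\frac{|k|}{\lambda^2+k^2}$, and an $s$-independent constant $\kappa(X)C_0$ absorbing the pole contributions and normalisation; the $X$-independence of $C_0$ is manifest since everything except the overall $\kappa(X)$ is a universal function of $s$ and $k$.

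The main obstacle will be the bookkeeping in the second part: one has to carefully disentangle which pieces of $\Omega(\sigma_k,\lambda)$ give the $\log\Gamma(s+k)$ versus $\log(s+k)$ versus the constant, and to identify the Laplace--Mellin transform of the $\psi$-terms with $\log\Gamma$ using the correct classical formula (this is where a sign or a shift by $1$ is easy to get wrong). A clean way to organise this is to first record the scalar model identity
\[
-\frac{\partial}{\partial z}\Big|_{z=0}\frac{1}{\Gamma(z)}\int_0^\infty t^{z-1}e^{-ts^2}\frac{1}{\sqrt{\pi t}}\int_\R e^{-t\lambda^2}\big(-\psi(1+i\lambda)-\psi(1-i\lambda)-2\gamma\big)\,d\lambda\,dt,
\]
evaluate it once against the known derivative-of-zeta formula for $\log\Gamma$, and then add the elementary rational corrections term by term, matching the shift $\lambda^2+l^2\leftrightarrow$ argument $s+l$ of the Digamma/Gamma function. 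I expect the identity for the $\psi$-part to be essentially \cite[Lemma 2, Lemma 3]{Fried} or its analogue used for $\mathcal{LM}I$ in the proof of Proposition \ref{PropId}, so the genuinely new input is only the treatment of the finitely many rational terms and the verification that the residual constant is independent of $X$.
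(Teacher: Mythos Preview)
Your plan is correct and would succeed, but it is considerably more laborious than the paper's own argument. For the formula for $\Omega(\sigma_k,\lambda)$, the paper does not recompute the logarithmic derivative of the Harish--Chandra $c$-function from scratch; it simply starts from the expression for $\Omega(\sigma_k,\lambda)$ already given in \cite[Theorem 6.2]{MP2} and applies the recursion $\psi(z+1)=\tfrac{1}{z}+\psi(z)$ repeatedly to rewrite the $\psi$-terms with shifted arguments as $\psi(1\pm i\lambda)$ plus the finite rational sum. For the Laplace--Mellin transform, the paper does not carry out the Gaussian/error-function/Binet computation you outline; instead it invokes \cite[Lemma 10.5, Lemma 10.6]{MP2}, which already evaluate precisely the integrals $\int_0^\infty t^{z-1}e^{-ts^2}\int_\R e^{-t\lambda^2}\psi(1\pm i\lambda)\,d\lambda\,dt$ and the corresponding integrals with $\frac{l}{\lambda^2+l^2}$, producing the $\log\Gamma(s+k)$, $\log(s+k)$, $\gamma s$ and constant terms directly. (The only extra remark needed is that those lemmas, stated for $c\in(0,\infty)$, go through unchanged for $\Real(c)>0$.) So what you propose is essentially a self-contained reproof of those two lemmas; the advantage of your route is that it is self-contained, while the paper's route is a two-line citation. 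Your guess that \cite[Lemma 2, Lemma 3]{Fried} is the relevant input is slightly off: those are used for the identity contribution $\mathcal{LM}I$ in Proposition~\ref{PropId}, whereas here the relevant precomputed identities are the ones in \cite{MP2}.
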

\begin{proof}
The statement about $\Omega(\sigma_k,\lambda)$ follows from an elementary
computation 
using the identity $\psi(z+1)=\frac{1}{z}+\psi(z)$. Thus the Proposition follows
from 
\eqref{FTh} and
\cite[Lemma 10.5, Lemma 10.6]{MP2}. Here we remark that
the assumption $c\in(0,\infty)$ in 
these Lemmas can be weakened to $c\in\C$, $\Real(c)>0$. The proofs remain the
same.  
\end{proof}

We finally turn to the contribution of the non-invariant distribution $J$.
\begin{prop}\label{PropJ}
Let the distribution $J$ be as in \eqref{DefJ} and let $k\in\mathbb{N}$. The one
has
\begin{align}\label{EqJ}
J(h_t^{\sigma_k})=\frac{\kappa(X)}{2\pi }\left(1+2\sum_{1\leq
j<|k|}e^{-t(k^2-j^2)}+e^{-tk^2}\right)\int_{\R}\frac{1}{i\lambda+k
}e^{-t\lambda^2
}d\lambda.
\end{align}
Let $s\in\C$, $\Real(s)>0$,
$\Real(s^2)>0$. For $\Real(z)>3/2$ the integral
\begin{align*}
\mathcal{LM}J(s,z,\sigma_k):=\int_{0}
^\infty t^{z-1}e^{-ts^2} 
J(h^{\sigma_k}_t)dt
\end{align*}
converges absolutely. Moreover, the function
$z\mapsto\mathcal{LM}J(s,z,\sigma_k)$ has a meromorphic continuation to
$\C$ with a
simple pole at $0$.
For $\mathcal{LM}J(s,\sigma_k):=\frac{\partial}{\partial
z}|_{z=0}\frac{\mathcal{LM}J(s,
z,\sigma_k)}{\Gamma(z)}$ one has
\begin{align*}
\mathcal{LM}J(s,\sigma_k) =&-2\kappa(X)\sum_{1\leq
j<k}\log{(\sqrt{s^2+k^2-j^2}
+k)}\\ &-\kappa(X)\log{(\sqrt{s^2+k^2}
+k)}-\kappa(X)\log{(s+k)}.
\end{align*}
\end{prop}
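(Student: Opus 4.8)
The plan is to first establish the explicit formula \eqref{EqJ} for $J(h_t^{\sigma_k})$ and then to carry out the Laplace--Mellin transform term by term. For the first part I would insert the test function $h_t^{\sigma_k}$ into the definition \eqref{DefJ} of $J$. The key input is that, by \eqref{FTh}, the only principal series characters on which $h_t^{\sigma_k}$ does not vanish are $\Theta_{\sigma_k,\lambda}$ and $\Theta_{w_0\sigma_k,\lambda}=\Theta_{\sigma_{-k},\lambda}$, on both of which it equals $e^{-t\lambda^2}$. Thus the sum over $\hat M$ in \eqref{DefJ} collapses to the two terms $\sigma=\sigma_k$ and $\sigma=\sigma_{-k}$. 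For each of these I need the logarithmic derivative $\Tr\bigl(J_{\bar P_0|P_0}(\sigma,\zeta)^{-1}\tfrac{d}{d\zeta}J_{\bar P_0|P_0}(\sigma,\zeta)\bigr)$ of the Knapp--Stein intertwining operator; this is a classical rank-one computation (essentially a ratio of Gamma functions determined by the $c$-function for $\SL_2(\C)$ twisted by $\sigma_k$), and combined with the branching data of Lemma \ref{branching} it produces exactly the rational function $\tfrac{1}{i\lambda+k}$ together with the finite exponential sum $1+2\sum_{1\le j<|k|}e^{-t(k^2-j^2)}+e^{-tk^2}$ coming from the shifts $c(\sigma_j)=j^2-1$. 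Finally I would deform the contour $D_\epsilon$ back to $\R$: the half-circle $H_\epsilon$ picks up (half of) the residue at $\zeta=0$, but because the integrand is built from $h_t^{\sigma_k}$ which is even and Schwartz, and because the poles of the intertwining operator on the real axis are compensated, the net contribution is the clean integral $\int_\R\tfrac{1}{i\lambda+k}e^{-t\lambda^2}\,d\lambda$ over the full line; this yields \eqref{EqJ}.

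Granting \eqref{EqJ}, the convergence and meromorphy claims are straightforward: for $0<t\le 1$ the Gaussian integral $\int_\R\tfrac{1}{i\lambda+k}e^{-t\lambda^2}d\lambda$ is $O(t^{-1/2})$ (write $\tfrac{1}{i\lambda+k}=\tfrac{k-i\lambda}{\lambda^2+k^2}$ and bound), so $J(h_t^{\sigma_k})$ has a short-time expansion in half-integer powers of $t$ with a $\log$-free leading term $t^{-1/2}$; for $t\ge1$ the exponential factors $e^{-t(k^2-j^2)}$ and $e^{-tk^2}$ together with $e^{-ts^2}$ give rapid decay once $\Real(s^2)>0$. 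Hence $\mathcal{LM}J(s,z,\sigma_k)$ converges for $\Real(z)>3/2$ and continues meromorphically with at worst a simple pole at $z=0$ coming from the $t^{-1/2}$ term — more precisely, the pole comes from the $j$ such that the relevant power becomes $t^{z-3/2}$, i.e.\ it is governed by $\int_0^\infty t^{z-1}e^{-t(s^2+k^2-j^2)}\int_\R\tfrac{1}{i\lambda+k}e^{-t\lambda^2}d\lambda\,dt$. I would compute this inner object in closed form: performing the $\lambda$-integral and then the $t$-integral, or else recognizing the whole double integral as a Laplace transform, gives something of the shape $c_1(z)(s^2+k^2-j^2)^{?}+c_2(z)\bigl(\sqrt{s^2+k^2-j^2}+k\bigr)^{?}$ with explicit Gamma-factor coefficients $c_i(z)$. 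The cleanest route is to use the elementary identity $\int_0^\infty t^{z-1}e^{-ta}\!\int_\R \tfrac{1}{i\lambda+k}e^{-t\lambda^2}d\lambda\,dt$ and differentiate $\tfrac{1}{\Gamma(z)}(\cdot)$ at $z=0$; this is exactly the type of computation already performed in \cite[Lemma 10.5, Lemma 10.6]{MP2} and in the proof of Proposition \ref{PropIi}, so I would cite those lemmas after reducing to their form.

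Assembling the three pieces — the summand $j$ with the $e^{-t(k^2-j^2)}$ weight for $1\le j<k$, each contributing $-2\kappa(X)\log(\sqrt{s^2+k^2-j^2}+k)$; the summand $e^{-tk^2}$ contributing $-\kappa(X)\log(\sqrt{s^2+k^2}+k)$; and the constant "$1$" term contributing $-\kappa(X)\log(s+k)$ (this is the $j=0$, weight-$1$ case where $\sqrt{s^2+0}+k=s+k$) — yields the stated formula for $\mathcal{LM}J(s,\sigma_k)$. The factor $2$ in front of the $j$-sum is exactly the $2$ appearing in front of $\sum_{1\le j<|k|}e^{-t(k^2-j^2)}$ in \eqref{EqJ}. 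I expect the main obstacle to be the derivation of \eqref{EqJ} itself, specifically the contour-deformation argument around $D_\epsilon$ and the precise bookkeeping of the logarithmic derivative of the intertwining operator for the non-trivial $M$-types $\sigma_k$ with $k\ge 2$ — the rank-one $c$-function computations for the fundamental types are standard, but here one needs the general $\sigma_k$ and must track how the half-residue at $\zeta=0$ interacts with the branching multiplicities $m_\nu(\sigma)$. Once \eqref{EqJ} is in hand, the Mellin analysis is routine and parallel to Proposition \ref{PropIi}, differing only in that the Gaussian integral $\int_\R \tfrac{1}{i\lambda+k}e^{-t\lambda^2}d\lambda$ replaces the Digamma-type integrand, which is precisely what turns $\log\Gamma(s+k)$ into $\log(\sqrt{s^2+k^2-j^2}+k)$.
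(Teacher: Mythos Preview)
Your treatment of the Laplace--Mellin step is essentially the paper's: once \eqref{EqJ} is established, the paper also reduces directly to \cite[Lemma~10.5]{MP2}, and your term-by-term accounting is correct (modulo a harmless slip: the constant ``$1$'' in \eqref{EqJ} corresponds to $j=k$, not $j=0$, since the shift is $k^2-j^2$).

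The genuine gap is in your derivation of \eqref{EqJ}. You invoke \eqref{FTh} to conclude that the sum over $\hat M$ in \eqref{DefJ} collapses to $\sigma\in\{\sigma_k,\sigma_{-k}\}$. That step does not go through: \eqref{FTh} is a statement about the \emph{character} $\Theta_{\sigma',\lambda}(h_t^{\sigma_k})=\Tr\,\pi_{\sigma',\lambda}(h_t^{\sigma_k})$, whereas the integrand in \eqref{DefJ} is $\Tr\bigl(J_{\bar P_0|P_0}(\sigma,\zeta)^{-1}\tfrac{d}{d\zeta}J_{\bar P_0|P_0}(\sigma,\zeta)\,\pi_{\sigma,\zeta}(h_t^{\sigma_k})\bigr)$, a trace of a product with the intertwining operator. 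The distribution $J$ is non-invariant, so the vanishing of the character for $\sigma'\notin\{\sigma_k,\sigma_{-k}\}$ tells you nothing about this weighted trace. In fact all $\sigma_j$ with $|j|\le k$ contribute, and that is precisely the origin of the finite exponential sum $1+2\sum_{1\le j<k}e^{-t(k^2-j^2)}+e^{-tk^2}$: the exponents are the shifts $c(\sigma_j)-c(\sigma_k)=j^2-k^2$ coming from the various $M$-types, not from two $M$-types combined with branching.

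The paper's route is to use \cite[equation~6.14]{MP2}, which decomposes the weighted trace over $K$-types: on the $\nu$-isotypic component of $\pi_{\sigma',\zeta}$ the intertwining operator acts by the scalar $c_\nu(\sigma':\zeta)$, so the trace becomes a double sum over $\nu\in\hat K$ and $\sigma'\in\hat M$ with weight $m_\nu(\sigma_k)[\nu:\sigma']$ and integrand $e^{-t(\zeta^2-c(\sigma'))}\,c_\nu(\sigma':\zeta)^{-1}\tfrac{d}{d\zeta}c_\nu(\sigma':\zeta)$. One then inserts the explicit Harish-Chandra $c$-function $c_{\nu_l}(\sigma_j:z)=\Gamma(iz-j)\Gamma(iz+j)/\bigl(\Gamma(iz-l)\Gamma(iz+l+1)\bigr)$ from \cite{Cohn} and applies Lemma~\ref{branching}; the telescoping between $\nu_k$ (sign $+1$) and $\nu_{k-1}$ (sign $-1$) leaves the single factor $\tfrac{1}{i\lambda+k}$, while the sum over $\sigma_j$ with $|j|\le k$ produces the exponential sum. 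After this simplification the combined integrand is regular at $\zeta=0$ and the contour $D_\epsilon$ may be replaced by $\R$ without a residue correction. Your contour-deformation worry is therefore a non-issue once the computation is organised correctly, but you cannot get there by collapsing the $\hat M$-sum via \eqref{FTh}.
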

\begin{proof}
For $j\in\mathbb{Z}$, $l\in\mathbb{N}^0$, $|j|\leq l$ we let
$c_{\nu_l}(\sigma_j,z)$ be the
Harish-Chandra $c$-function associated 
to the representation $\nu_l$ and $\sigma_j$ defined by \cite[equation
6.7]{MP2}. Then by \cite[Appendix 2]{Cohn} one has
\begin{align*}
c_{\nu_l}(\sigma_j,z):=\frac{\Gamma(iz-j)\Gamma(iz+j)}{
\Gamma(iz-l)\Gamma(iz+l+1)},
\end{align*}
see also \cite[equation 6.8]{MP2}. By \cite[equation 6.14]{MP2} and the
definition of $h_t^{\sigma_k}$ one
has \begin{align*}
J(h_t^{\sigma_k})=
-\frac{\kappa(X) e^{-tc(\sigma_k)}}{4\pi
i}\sum_{\nu\in\hat{K}}\sum_{\sigma'\in\hat{M}}m_{\nu}(\sigma_k)\left[
\nu:\sigma'\right
]\int_{D_{\epsilon}}{e^{-t(\zeta^{2}-c(\sigma'))}c_{\nu}(\sigma':\zeta)^{-1}
\frac{d}{d\zeta}c_{\nu}
(\sigma':\zeta)d\zeta}.
\end{align*}
Thus if one applies Lemma \ref{branching}, equation \eqref{EqJ} follows. 
If one applies \cite[Lemma 10.5]{MP2} to \eqref{EqJ}, the formula for
$\mathcal{LM}J(s,\sigma_k)$ follows. 
Here we remark again that the condition $c\in(0,\infty)$ in \cite[Lemma
10.5]{MP2} can be weakened to the condition $c\in\C$, $\Real(c)>0$ 
without changing the proof.

\end{proof}
By the preceding proposition, each summand on the right hand side of
\eqref{eqdet} can
be integrated individually and we have
\begin{align}\label{eqdet2}
&\log{\det}_{\gr}(A(\sigma)+s^2)\nonumber\\=&-\mathcal{LM}I(s,\sigma)-\mathcal{
LM}
H(s,
\sigma)-\mathcal{LM}T(s,\sigma)-\mathcal{LM}\mathcal{I}(s,\sigma)-\mathcal{LM}
J(s,\sigma).
\end{align}
To proof our determinant formula, we will also need the following Lemma.
\begin{lem}\label{LemTr}
Let
$\Tr_{\reg}(e^{-tA(\sigma)}):=\sum_{\nu\in\hat{K}}m_\nu(\sigma)e^{-tc(\sigma)}
\Tr_{\reg}(e^{ -tA_\nu})$. Then for $s,s_1\in\C$ with $\Real(s)>0$,
$\Real(s_1)>0$ one has
\begin{align*}
&\int_{0}^\infty
(e^{-ts}-e^{-ts_1})\Tr_{\reg}(e^{-tA(\sigma)})dt\\=&\frac{d}{d\zeta}\log{
\det}_{\gr
}(A(\sigma)+\zeta)\bigr|_{
\zeta=s}-\frac{d}{d\zeta}\log{
\det}_{\gr}(A(\sigma)+\zeta)\bigr|_{
\zeta=s_1}.
\end{align*}
\end{lem}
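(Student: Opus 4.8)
The plan is to establish the identity in Lemma~\ref{LemTr} by differentiating the zeta-function definition of the graded determinant and interchanging this differentiation with the $t$-integral, and then to recognize the resulting expression as the stated integral of the regularized trace.

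\medskip

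First I would recall that, by Proposition~\ref{Ancontxi} and the definition of ${\det}_{\gr}$, for $\zeta\in\C$ with $\Real(\zeta)>0$ one has
\begin{align*}
\frac{d}{d\zeta}\log{\det}_{\gr}(A(\sigma)+\zeta)
=-\frac{d}{d\zeta}\frac{\partial}{\partial z}\biggr|_{z=0}
\sum_{\substack{\nu\in\hat{K}\\ m_\nu(\sigma)\neq 0}}
\frac{m_\nu(\sigma)}{\Gamma(z)}\,\xi_\nu\bigl(c(\sigma)+\zeta,z\bigr).
\end{align*}
For $\Real(z)$ large, $\frac{\partial}{\partial\zeta}\xi_\nu(c(\sigma)+\zeta,z)=-z\int_0^\infty t^{z-1}e^{-t(c(\sigma)+\zeta)}\Tr_{\reg}(e^{-tA_\nu})\,dt$, which after dividing by $\Gamma(z)$ becomes $-\Gamma(z)^{-1}\int_0^\infty t^{z}e^{-t\zeta}e^{-tc(\sigma)}\Tr_{\reg}(e^{-tA_\nu})\,dt/\Gamma(z)$ up to the extra factor; more precisely $-z/\Gamma(z)=-1/\Gamma(z)$ after the standard shift, so $\frac{\partial}{\partial\zeta}\bigl(\xi_\nu(c(\sigma)+\zeta,z)/\Gamma(z)\bigr)$ extends, via Proposition~\ref{Ancontxi}, to a function holomorphic near $z=0$ whose value there is $-\int_0^\infty e^{-t\zeta}e^{-tc(\sigma)}\Tr_{\reg}(e^{-tA_\nu})\,dt$ — this last integral converging absolutely for $\Real(\zeta)>0$ by the exponential bound $|\Tr_{\reg}(e^{-tA_\nu})|\le Ce^{-t(c(\sigma)-b(\nu))}e^{tc(\sigma)}\cdot e^{-tc(\sigma)}$ coming from \eqref{regtrace2} together with Lemma~\ref{Lemdet}. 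Summing over $\nu$ with weights $m_\nu(\sigma)$ and using the definition of $\Tr_{\reg}(e^{-tA(\sigma)})$ gives
\begin{align*}
\frac{d}{d\zeta}\log{\det}_{\gr}(A(\sigma)+\zeta)
=\int_0^\infty e^{-t\zeta}\,\Tr_{\reg}(e^{-tA(\sigma)})\,dt,
\qquad \Real(\zeta)>0,
\end{align*}
with the integral absolutely convergent. The Lemma then follows by subtracting this identity at $\zeta=s$ and at $\zeta=s_1$ and combining the two convergent integrals into $\int_0^\infty(e^{-ts}-e^{-ts_1})\Tr_{\reg}(e^{-tA(\sigma)})\,dt$, which converges absolutely near $t=0$ because the difference $e^{-ts}-e^{-ts_1}=O(t)$ cancels the worst part of the $t\to 0^+$ asymptotics in \eqref{Asym} (shifted by $e^{tc(\sigma)}$), and near $t=\infty$ by the exponential decay above.

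\medskip

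The main obstacle is the justification of the interchange of $\partial_z|_{z=0}$, $\partial_\zeta$, and the Mellin integral — i.e., showing that the $\zeta$-derivative of the analytically continued function $z\mapsto\xi_\nu(c(\sigma)+\zeta,z)/\Gamma(z)$ is still given near $z=0$ by the (now convergent) Mellin transform of $e^{-t\zeta}e^{-tc(\sigma)}\Tr_{\reg}(e^{-tA_\nu})$. The cleanest route is to split $\int_0^\infty=\int_0^1+\int_1^\infty$: the tail $\int_1^\infty$ is entire in $z$ and holomorphic in $\zeta$ for $\Real(\zeta)>0$ with derivatives obtained by differentiating under the integral (dominated convergence via the exponential bound), so it poses no difficulty; for $\int_0^1$ one inserts the asymptotic expansion \eqref{Asym} (with $s$ there replaced by $\zeta+c(\sigma)$, whose coefficients depend holomorphically on $\zeta$), subtracts finitely many terms to make the remainder integrable, and checks that the finitely many explicitly-integrated model terms $\int_0^1 t^{z-1}t^{\alpha}\,dt=(z+\alpha)^{-1}$ (and the $\log t$ variants) have $\zeta$- and $z$-derivatives that commute with differentiation in the obvious way — in particular the $t^{-3/2}$ and $t^{-1/2}$ terms produce no pole at $z=0$ and the constant term $c_0$ is $\zeta$-independent by Proposition~\ref{Ancontxi}, so the pole structure is exactly a simple pole at $z=0$ with $\zeta$-independent residue, and $\partial_\zeta$ of $\xi_\nu(c(\sigma)+\zeta,z)/\Gamma(z)$ is therefore regular at $z=0$ with the claimed value. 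Once this bookkeeping is in place the identity drops out; I would not write out the coefficient-by-coefficient check in detail, citing \eqref{Asym}, Proposition~\ref{Ancontxi}, and standard facts about Mellin transforms as in \cite{Gilkey} instead.
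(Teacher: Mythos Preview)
Your argument contains a genuine gap. The displayed identity
\[
\frac{d}{d\zeta}\log{\det}_{\gr}(A(\sigma)+\zeta)=\int_0^\infty e^{-t\zeta}\,\Tr_{\reg}\bigl(e^{-tA(\sigma)}\bigr)\,dt,\qquad \Real(\zeta)>0,
\]
which you assert ``with the integral absolutely convergent'', is false: the integral on the right diverges at $t=0$. By \eqref{Asym} the graded regularized trace behaves like $a_0 t^{-3/2}$ as $t\to0^+$, and the leading coefficient does \emph{not} cancel in the alternating sum over $\nu$ (it is proportional to $\dim\nu_{|k|}-\dim\nu_{|k|-1}=2$). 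You are confusing two distinct statements: that the analytically continued Mellin transform has no pole at $z=0$ (true, since the half-integer powers $t^{-3/2}$, $t^{-1/2}$ give poles only at half-integers) and that the defining integral converges at $z=0$ (false). Your ``main obstacle'' paragraph correctly reads off the pole structure but does not close this gap; a finite continued value cannot be rewritten as a divergent integral.

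The paper's proof avoids this by never separating $s$ from $s_1$. It keeps the \emph{difference} $\int_0^\infty(e^{-ts}-e^{-ts_1})\Tr_{\reg}(e^{-tA(\sigma)})\,dt$ throughout, which is genuinely convergent, writes it as $\lim_{z\to0}\int_0^\infty t^{z}(e^{-ts}-e^{-ts_1})\Tr_{\reg}(e^{-tA(\sigma)})\,dt$, and uses the shift identity $\xi_{A(\sigma)}(\zeta,z+1)=-\partial_\zeta\xi_{A(\sigma)}(\zeta,z)$ to recognise this as $-\lim_{z\to0}\bigl(\partial_\zeta\xi_{A(\sigma)}|_{\zeta=s}-\partial_\zeta\xi_{A(\sigma)}|_{\zeta=s_1}\bigr)$. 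The interchange of $\lim_{z\to0}$ and $\partial_\zeta$ is then justified by subtracting the $\zeta$-independent singular part $\alpha(\sigma)/z-\gamma\alpha(\sigma)$ (furnished by Proposition~\ref{Ancontxi}), which makes the expression jointly holomorphic and is annihilated by $\partial_\zeta$ anyway. Your route can be repaired along exactly these lines, but only by abandoning the claim that each term $\int_0^\infty e^{-t\zeta}\Tr_{\reg}(e^{-tA(\sigma)})\,dt$ makes sense on its own.
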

\begin{proof}
For $\zeta\in\C$, $\Real(\zeta)>0$ and $z\in\C$ let
$\xi_{A(\sigma)}(\zeta,z):=\sum_{
\nu\in\hat{K}}m_\nu(\sigma)\xi_{\nu}(\zeta+c(\sigma),z)$,
where the $\xi_{\nu}$ are as in the previous section. Then by definition one has
\begin{align*}
\log{\left({\det}_{\gr}(A(\sigma)+s)\right)}=-\frac{\partial}{\partial
z}\frac{\xi_{A(\sigma)}(s,z)}{\Gamma(z)}\bigr|_{z=0}.
\end{align*}
By \eqref{regtrace2} and the choice of $s$ and $s_1$,
$(e^{-ts}-e^{-ts_1})\Tr_{\reg}(e^{-tA(\sigma)})$ decays exponentially for
$t\to\infty$ and by \eqref{Asym} one has
$(e^{-ts}-e^{-ts_1})\Tr_{\reg}(e^{-tA(\sigma)})=O(t^{-1/2})$ as $t\to 0$.
Thus the 
integral in the lemma exists. 
By Proposition \ref{Ancontxi}, there  exists a constant $\alpha(\sigma)$
such that
for all $\zeta$ with $\Real(\zeta)>0$ one has
$\Res\bigr|_{z=0}\xi_{A(\sigma)}(\zeta,z)=\alpha(\sigma)$
and since $\frac{1}{\Gamma(z)}=z+\gamma z^2+O(z^3)$ as $z\to 0$, for all such
$\zeta$ one has $\log{\det}_{\gr}(A(\sigma)+\zeta)=-\lim_{z\rightarrow
0}(\xi_{A(\sigma)}(\zeta,z)-\frac{\alpha(\sigma)}{z}+\gamma
\alpha(\sigma))$.
One has $\xi_{A(\sigma)}(\zeta,z+1)=-\frac{\partial}{\partial
\zeta}\xi_{A(\sigma)}(\zeta,z)$ by the definition of $\xi_{A(\sigma)}$ and by
meromorphic continuation.
Thus one has
\begin{align*}
&\int_{0}^\infty
(e^{-ts}-e^{-ts_1})\Tr_{\reg}(e^{-tA(\sigma)})dt=\lim_{z\to
0}\int_{0}^\infty
t^z(e^{-ts}-e^{-ts_1})\Tr_{\reg}(e^{-tA(\sigma)})dt\\ 
=&-\lim_{z\to 0}\left(\frac{\partial}{\partial
\zeta}\xi_{A(\sigma)}(\zeta,z)\bigr|_{\zeta=s}-\frac{\partial}{\partial
\zeta}\xi_{A(\sigma)}(\zeta,z)\bigr|_{\zeta=s_1}\right)\\
=&-\lim_{z\to 0}\left(\frac{\partial}{\partial
\zeta}(\xi_{A(\sigma)}(\zeta,z)-\frac{\alpha(\sigma)}{z}+\gamma
\alpha(\sigma))\bigr|_{\zeta=s}-\frac{\partial}{\partial
\zeta}(\xi_{A(\sigma)}(\zeta,z)-\frac{\alpha(\sigma)}{z}+\gamma
\alpha(\sigma))\bigr|_{\zeta=s_1}\right)\\
=&\frac{d}{d\zeta}\log{\det}_{\gr}(A(\sigma)+\zeta)\bigr|_{
\zeta=s}-\frac{d}{
d\zeta}\log{\det}_{\gr}(A(\sigma)+\zeta)\bigr|_{\zeta=s_1}.
\end{align*}
Here limit and differentation in the third line can be interchanged since the
function
$(\zeta,z)\mapsto\xi_{A(\sigma)}(\zeta,z)-\frac{\alpha(\sigma)}{z}+\gamma
\alpha(\sigma)$ is holomorphic for $\zeta\in\C$, $\Real(\zeta)>0$ and
$z$ in 
a neighbourhood of zero.
\end{proof}

Now we can state the determinant formula for the symmetric Selberg zeta
function, which is the main 
result of this section.
\begin{prop}\label{Detdrd}
Let $s\in\C$, $\Real(s)>0$, $\Real(s^2)>0$. Let
$\kappa(X)$ be the number of cusps of $X$ and let
$c_\Gamma:=2\left(C(\Gamma)-\gamma
\kappa(X)\right)$, 
where $C(\Gamma)$ is as above. Then there exists a constant $C_0$ which is
independent of $X$ such that 
for every  $k\in\mathbb{N}$ one has
\begin{align*}
S(s,\sigma_k)=&e^{\kappa(X)C_0}\:{\det}_{\gr}{\left(A(\sigma)+s^2\right)}\exp{
\left(-4\pi\vol(X)\int_{0}^s P_{\sigma_k}(r)dr\right)}
\Gamma^{2\kappa(X)}(s+k)\\
&\cdot (s+k)^{\kappa(X)}\exp{\left(\mathcal{LM}J(s,\sigma_k)-sc_\Gamma\right)}.
\end{align*}
\end{prop}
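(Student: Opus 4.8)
The plan is to combine the Mellin-transform computations of the previous propositions with the known relation between the symmetric Selberg zeta function and the regularized trace, so the only real work is comparing two expressions for $\frac{d}{d\zeta}\log S(s,\sigma_k)$ (or rather its logarithmic derivative) and then integrating. First I would use \eqref{FTh} together with the trace formula \eqref{Trfrml} applied to the test function $h_t^{\sigma_k}$, in the shape already prepared in \eqref{eqdet}--\eqref{eqdet2}: this gives
\begin{align*}
\log{\det}_{\gr}(A(\sigma)+s^2)=-\mathcal{LM}I(s,\sigma)-\mathcal{LM}H(s,\sigma)-\mathcal{LM}T(s,\sigma)-\mathcal{LM}\mathcal{I}(s,\sigma)-\mathcal{LM}J(s,\sigma),
\end{align*}
and then substitute the explicit evaluations from Proposition \ref{PropId} ($-4\pi\vol(X)\int_0^s P_{\sigma_k}(r)\,dr$), Proposition \ref{PropT} ($-2C(\Gamma)s$), Proposition \ref{PropIi} ($\kappa(X)C_0+2\kappa(X)\gamma s+2\kappa(X)\log\Gamma(s+k)+\kappa(X)\log(s+k)$), and Proposition \ref{PropJ} ($\mathcal{LM}J(s,\sigma_k)$). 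The term $\mathcal{LM}H(s,\sigma)$ is only controlled by the estimate $|\mathcal{LM}H(s,\sigma)|\le Cs^{-2}$ of Proposition \ref{PropHyp}, so it cannot simply be plugged in; this is where the argument has to be more careful.

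To deal with $\mathcal{LM}H$, I would bring in the Selberg zeta function directly rather than only its ``identity/weighted-orbital/scattering'' bookkeeping. The standard fact (from \cite[section 3]{Pf}, generalizing Bunke--Olbrich) is that $\frac{d}{ds}\log S(s,\sigma_k)$ equals, up to the elementary factors coming from $I$, $T$, $\mathcal{I}$, $J$, exactly the hyperbolic contribution $H(h_t^{\sigma_k})$ integrated in $t$; concretely, for $\Real(s)$ large one has an identity of the form
\begin{align*}
\frac{1}{2s}\frac{d}{ds}\log S(s,\sigma_k)=\int_0^\infty e^{-ts^2}H(h_t^{\sigma_k})\,dt+(\text{boundary/elementary terms}),
\end{align*}
valid because $H(h_t^{\sigma})=f(t)$ is literally built from the geodesic sum defining $S(s,\sigma_k)$. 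Matching this with Lemma \ref{LemTr}, applied with $\Tr_{\reg}(e^{-tA(\sigma)})$ and the two regularization points $s$ and an auxiliary $s_1$, converts the $t$-integral of $H(h_t^\sigma)$ into $\frac{d}{d\zeta}\log{\det}_{\gr}(A(\sigma)+\zeta^2)$ plus the $s_1$-dependent pieces, which are then absorbed by choosing the normalizing constant $C_0$ and by the relation $c_\Gamma=2(C(\Gamma)-\gamma\kappa(X))$. The bound from Proposition \ref{PropHyp} is exactly what guarantees that $\mathcal{LM}H(s,\sigma)\to 0$ as $\Real(s)\to\infty$, pinning down the otherwise undetermined additive constant of integration.

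So the concrete steps are: (1) write down \eqref{eqdet2} with the explicit values substituted, isolating the unknown $\mathcal{LM}H(s,\sigma)$; (2) independently express $\frac{d}{ds}\log S(s,\sigma_k)$ via the geodesic product and recognize its hyperbolic part as $2s\int_0^\infty e^{-ts^2}f(t)\,dt$; (3) use Lemma \ref{LemTr} to relate $\int_0^\infty(e^{-ts^2}-e^{-ts_1^2})\Tr_{\reg}(e^{-tA(\sigma)})\,dt$ to the logarithmic derivative of ${\det}_{\gr}(A(\sigma)+\zeta)$ at $\zeta=s^2$ and $\zeta=s_1^2$; (4) integrate in $s$ from a large base point, using Proposition \ref{PropHyp} to see that the contribution of $\mathcal{LM}H$ decays, hence fixing the constant; (5) collect the elementary factors $\exp(-4\pi\vol(X)\int_0^s P_{\sigma_k}(r)\,dr)$, $\Gamma^{2\kappa(X)}(s+k)$, $(s+k)^{\kappa(X)}$, $\exp(\mathcal{LM}J(s,\sigma_k)-sc_\Gamma)$, and the overall constant $e^{\kappa(X)C_0}$, and read off the claimed formula; finally observe that both sides are holomorphic on $\{\Real(s)>0,\ \Real(s^2)>0\}$ (the left by \cite[Theorem 1]{Pf}, the right by Proposition \ref{Ancontxi} and Propositions \ref{PropId}--\ref{PropJ}), so the identity proven for large real $s$ propagates by analytic continuation. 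The main obstacle I expect is bookkeeping step (4): making sure that the constant of integration, the $s_1$-dependent terms from Lemma \ref{LemTr}, and the definition of $c_\Gamma$ and $C_0$ all line up so that no spurious $s$-independent or linear-in-$s$ term survives — the estimate of Proposition \ref{PropHyp} is the tool that makes this determination possible but one has to track the $\Gamma$-function asymptotics ($\log\Gamma(s+k)\sim s\log s$) carefully against the Plancherel-polynomial term and the $\mathcal{LM}J$ logarithms to confirm consistency.
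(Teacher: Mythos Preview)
Your approach is essentially the paper's: relate $\int_0^\infty(e^{-ts^2}-e^{-ts_1^2})H(h_t^\sigma)\,dt$ to the logarithmic derivative of $S(s,\sigma)$ (this is \cite[equation 7.7]{Pf}), combine with the trace formula and Lemma~\ref{LemTr}, integrate, and then fix the constants using Proposition~\ref{PropHyp} before extending by analytic continuation. The one simplification the paper makes over your step~(4) is that after integrating one obtains $\log S(s,\sigma)=\log{\det}_{\gr}(A(\sigma)+s^2)+\mathcal{LM}I+\mathcal{LM}\mathcal I+\mathcal{LM}T+\mathcal{LM}J+as^2+b$, and substituting \eqref{eqdet2} back collapses this to $\log S(s,\sigma)=-\mathcal{LM}H(s,\sigma)+as^2+b$; since $\log S(s,\sigma)\to 0$ exponentially as $s\to\infty$ (from the product expansion, \cite[equations 3.3--3.5]{Pf}) and $\mathcal{LM}H(s,\sigma)\to 0$ by Proposition~\ref{PropHyp}, one reads off $a=b=0$ directly, so there is no need to match the individual $\Gamma$-, Plancherel-, and $\mathcal{LM}J$-asymptotics you were worried about.
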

\begin{proof}
We fix $s_1\in\R$, $s_1>2$ and let $s\in\R$, $s>2$. We let $\sigma:=\sigma_k$.
Then we can apply \cite[equation 7.7]{Pf} with $N=2$, $s_2:=s$ and obtain
\begin{align*}
\int_{0}^\infty
(e^{-ts^2}-e^{-ts_1^2})H(h_t^\sigma)dt=\frac{1}{2s}\frac{d}{ds}S(s,\sigma)-\frac
{1
}{2s_1}\frac{d}{ds}S(s_1,\sigma).
\end{align*}
Thus if we apply \eqref{Trfrml}, \eqref{FTh} and equation \eqref{Plnchrl},
Proposition
\ref{PropId}, Proposition
\ref{PropT},
Proposition \ref{PropIi} and Propostion \ref{PropJ}, we obtain a
constant $a$, depending on $s_1$, such that
\begin{align*}
&\int_{0}^\infty
(e^{-ts^2}-e^{-ts_1^2})\Tr_{\reg}(e^{-tA(\sigma)})dt\\=&\frac{1}{2s}\frac{d}{ds}
\log{S(s,\sigma)}+
\frac{2\pi\vol(X)P_\sigma(s)}{s}+\frac{C(\Gamma)}{s}-\frac{
\kappa(X)\gamma+\kappa(X)\psi(s+k)}{
s}\\
&+\frac{\kappa(X)}{
2\sqrt{s^2+k^2}(\sqrt{s^2+k^2}+k)}+\sum_{1\leq
j<k}\frac{\kappa(X)}{\sqrt{s^2+k^2-j^2}(\sqrt{s^2+k^2-j^2}+k)}+a\\
=&\frac{1}{2s}\frac{d}{ds}\biggl(
\log{S(s,\sigma)}-\mathcal{LM}I(s,\sigma)-\mathcal{LM}
T(s,\sigma)-\mathcal{LM}\mathcal{I}(s,\sigma)-\mathcal{LM}J(s,
\sigma)\biggr)
+a.
\end{align*}
If we multiply this equation by $2s$ and apply Lemma \ref{LemTr}, we obtain
\begin{align}\label{eqSlbrg}
\log{S(s,\sigma)}=&\log{{\det}_{\gr}(A(\sigma)+s^2)}+\mathcal{LM}I(s,
\sigma)+\mathcal{LM
}
\mathcal{I}(s,\sigma)+\mathcal{LM}T(s,\sigma)+\mathcal{LM}J(s,\sigma)\nonumber\\
&+as^2+b.
\end{align}
for some constant $b$. Thus by \eqref{eqdet2} we have
$\log{S(s,\sigma)}=-\mathcal{LM}H(s,\sigma)+as^2+b$.
Applying \cite[equations 3.3, 3.4, 3.5]{Pf}, it follows that $\log{S(s,\sigma)}$
decays
exponentially as $\Real(s)\to\infty$. Since $\mathcal{LM}H(s,\sigma)$ 
tends to zero for $s\in\R$, $s\to\infty$ by Propostion \ref{PropHyp}, the
constants $a$ and $b$ are zero. If we apply Proposition \ref{PropId},
Proposition \ref{PropT} and Proposition \ref{PropIi} and to the right hand side
of
\eqref{eqSlbrg}, the 
Proposition follows for $s\in (2,\infty)$. By \cite[Theorem 1.1]{Pf}, the
function
$S(s,\sigma)$ 
has a meromorphic continuation to $\C$ and since all functions on the right hand
side of the equation in the Proposition are holomorphic in 
$s\in\C$, $\Real(s)>0$, $\Real(s^2)>0$ by Proposition \ref{Ancontxi}, Lemma
\ref{Lemdet} and Proposition \ref{PropJ}, the Proposition 
follows.
\end{proof}

\section{The functional equations}\label{secfe}
Let $\sigma\in\hat{M}$. In this section we
prove a functional equation for the 
symmetric Selberg zeta function $S(s,\sigma)$.\\
For $\nu\in\hat{K}$ and $\sigma\in\hat{M}$  with $\left[\nu:\sigma\right]\neq 0$
we define 
the space $\boldsymbol{\mathcal{E}}(\nu:\sigma
)$ and the operator 
$\mathbf{C}(\nu:\sigma:\lambda):\boldsymbol{\mathcal{E}}(\nu:\sigma
) \to\boldsymbol{\mathcal{E}}(\nu:w_0\sigma
)$ as in \cite[section 4]{Pf}. 
Let us first symmetrize the scattering matrices. For $\sigma\in\hat{M}$,
$\sigma\neq w_0\sigma$ and
$\nu\in\hat{K}$ we let
$\boldsymbol{\overline{\mathcal{E}}}(\boldsymbol{\sigma},\nu):=\boldsymbol{
\mathcal{E}} (\boldsymbol{\sigma},\nu)\oplus
\boldsymbol{\mathcal{E}}(\boldsymbol{w_0\sigma},\nu)$ and for $s\in\C$ we let
\begin{align*}
\mathbf{\overline{C}}(\sigma:\nu:s):\
\:\boldsymbol{\overline{\mathcal{E}}}
(\boldsymbol{\sigma},\nu)\rightarrow
\boldsymbol{\overline{\mathcal{E}}}(\boldsymbol{\sigma},\nu);\quad
{\mathbf{\overline{C}}}(\sigma:\nu:s):=\begin{pmatrix}0&\mathbf{C}
(w_0\sigma:\nu:s)\\ \mathbf{C}(
\sigma :\nu:s)&0 \end{pmatrix}.
\end{align*}
By the 
arguments of \cite[section 4]{Pf}, the function
$\left(\det{\mathbf{\overline{C}}}(\sigma:\nu:s)\right)^{\frac{1}{
\dim(\nu)}}$ 
is canonically defined. For $\sigma\in\hat{M}$ we let $\nu_\sigma$ be as in
\cite[section 4]{Pf}. Then, if
$\sigma=\sigma_k$ we have $\nu_\sigma=\nu_{\left|k\right|}$. To save notation,
for $k\in\frac{1}{2}\mathbb{N}$ we
shall write
\begin{align*}
\left(\det{\mathbf{\overline{C}}}(\sigma_k:\nu_k:s)\right)^{\frac{
1}{
\dim(\nu)}}
=:\mathbf{C}(k:s).
\end{align*}
Then $\mathbf{C}(k:s)$ is a meromorphic function of $s$ which has no zeroes and
poles for $s\in i\R$.  By \cite[equation 4.2]{Pf} it satisfies
$\mathbf{C}(k:s)\mathbf{C}(k:-s)=1$.\\
We can now state a functional equation for the symmetric Selberg
zeta function.
\begin{prop}\label{FGSZF}
Let $k\in\mathbb{N}$ and let $c_\Gamma$ be as in Proposition
\ref{Detdrd}. 
Then the symmetric Selberg zeta
function $S(s,\sigma_k)$ satisfies 
the functional equation
\begin{align*}
S(-s,\sigma_k)=&S(s,\sigma_k)\exp{\left(8\pi\vol(X)\int_{0}^s
P_{\sigma_k}(r)dr+2c_\Gamma
s\right)}\frac{\left(\Gamma(-s+k)\right)^{2\kappa(X)}}{\left(\Gamma(s+k)\right)^
{2\kappa(X)}}
\frac{\mathbf{C}(k:s)}{\mathbf{C}
(k:0)}.
\end{align*}
  
\end{prop}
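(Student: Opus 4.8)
The plan is to derive the functional equation by combining the determinant formula of Proposition \ref{Detdrd} with the functional equation of the graded determinant ${\det}_{\gr}(A(\sigma_k)+s^2)$, which comes in turn from the functional equation of the scattering matrices. First I would apply Proposition \ref{Detdrd} twice, once at $s$ and once at $-s$ (both of which, after appropriate analytic continuation, are legitimate since all the ingredients are meromorphic in $s$). Forming the quotient $S(-s,\sigma_k)/S(s,\sigma_k)$, the factor $e^{\kappa(X)C_0}$ cancels, the volume term contributes $\exp(-4\pi\vol(X)(\int_0^{-s}-\int_0^s)P_{\sigma_k})=\exp(8\pi\vol(X)\int_0^s P_{\sigma_k}(r)\,dr)$ since $P_{\sigma_k}$ is even so $\int_0^{-s}P_{\sigma_k}=-\int_0^s P_{\sigma_k}$, the $e^{-sc_\Gamma}$ factor turns into $e^{2c_\Gamma s}$, and the $\Gamma$-factors combine into $(\Gamma(-s+k)/\Gamma(s+k))^{2\kappa(X)}$. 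What remains is to understand the quotient of the two remaining pieces: ${\det}_{\gr}(A(\sigma_k)+s^2)\,(s+k)^{\kappa(X)}\exp(\mathcal{LM}J(s,\sigma_k))$ evaluated at $-s$ versus at $s$.

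The key step is therefore a functional equation for ${\det}_{\gr}(A(\sigma_k)+s^2)$. Here I would use the fact (from \cite[section 4]{Pf}, as recalled in the present section) that the scattering operators $\mathbf{C}(\nu:\sigma_k:\lambda)$, and hence the symmetrized determinant $\mathbf{C}(k:s)$, govern the difference between $\det(A_\nu+c(\sigma_k)+s^2)$ and its value under $s\mapsto -s$. Concretely, from the expression \eqref{regtrace2} for the regularized trace, the continuous-spectrum contribution to $\det_{\gr}$ is built from the scattering matrices, and the standard manipulation — differentiating $\log\det_{\gr}(A(\sigma_k)+s^2)$ in $s$, expressing the derivative via $\Tr_{\reg}(e^{-tA(\sigma_k)})$ as in Lemma \ref{LemTr}, and tracking the scattering integral — yields that $\log\det_{\gr}(A(\sigma_k)+s^2)-\log\det_{\gr}(A(\sigma_k)+s^2)|_{s\to -s}$ equals a logarithmic derivative expression in $\mathbf{C}(k:\cdot)$ together with elementary $\Gamma$- and rational contributions coming from the $c$-function factors $c_{\nu_l}(\sigma_j,z)=\Gamma(iz-j)\Gamma(iz+j)/(\Gamma(iz-l)\Gamma(iz+l+1))$ that enter $\widetilde{\boldsymbol{C}}$. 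After integrating, the pure rational/$\Gamma$ part should be designed to cancel exactly against the $(s+k)^{\kappa(X)}$ and $\exp(\mathcal{LM}J(s,\sigma_k))$ asymmetries, using the explicit formula for $\mathcal{LM}J(s,\sigma_k)$ from Proposition \ref{PropJ} and the reflection/recursion identities for $\Gamma$; what survives is precisely $\mathbf{C}(k:s)/\mathbf{C}(k:0)$, where the normalization by $\mathbf{C}(k:0)$ appears because the functional equation relates $s$ to $-s$ symmetrically and $\mathbf{C}(k:s)\mathbf{C}(k:-s)=1$ forces the constant to be read off at $s=0$.

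The main obstacle I expect is the bookkeeping in the second step: correctly identifying which part of the scattering contribution to ${\det}_{\gr}(A(\sigma_k)+s^2)$ is the ``genuine'' scattering determinant $\mathbf{C}(k:s)$ and which part is an elementary $\Gamma$-factor artifact of the particular $K$-types $\nu_{|k|}$, $\nu_{|k|-1}$ appearing with signs $m_\nu(\sigma_k)=\pm 1$ in $A(\sigma_k)$, and then verifying that these elementary factors conspire with $(s+k)^{\kappa(X)}$ and $\exp(\mathcal{LM}J(s,\sigma_k))$ to leave no residue. This is where the explicit shape of $c_{\nu_l}(\sigma_j,z)$ and the formula for $\mathcal{LM}J$ must be used in a precise, matched way; a sign error or an off-by-one in the $\Gamma$-recursion would spoil the clean statement. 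Once that matching is checked for $s$ in the region $\Real(s)>0$, $\Real(s^2)>0$ where Proposition \ref{Detdrd} applies, the functional equation extends to all of $\C$ by the meromorphic continuation of $S(s,\sigma_k)$ from \cite[Theorem 1.1]{Pf} and of the remaining factors, exactly as at the end of the proof of Proposition \ref{Detdrd}.
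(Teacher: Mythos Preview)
Your approach has a genuine gap at the core step. You propose to obtain the functional equation by applying the determinant formula of Proposition \ref{Detdrd} at both $s$ and $-s$ and then extracting a ``functional equation for ${\det}_{\gr}(A(\sigma_k)+s^2)$'' that is supposed to produce the factor $\mathbf{C}(k:s)/\mathbf{C}(k:0)$. But ${\det}_{\gr}(A(\sigma_k)+s^2)$ is, by its very definition, a function of $s^2$; it is \emph{trivially} invariant under $s\mapsto -s$ and admits no nontrivial functional equation of the kind you describe. The scattering contribution to $\Tr_{\reg}(e^{-tA(\sigma_k)})$ that you invoke enters the determinant only through an integral of the logarithmic derivative of $\widetilde{\boldsymbol{C}}$ over the spectral line; after the Laplace--Mellin transform this produces a function of $\zeta=s^2$, not the value $\mathbf{C}(k:s)$ at the point $s$. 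Consequently, if your quotient argument went through, the remaining factors $(s+k)^{\kappa(X)}\exp(\mathcal{LM}J(s,\sigma_k))$ would combine (since only the term $-\kappa(X)\log(s+k)$ in $\mathcal{LM}J$ is odd in $s$) to give exactly $1$, and you would arrive at the claimed functional equation \emph{without} the factor $\mathbf{C}(k:s)/\mathbf{C}(k:0)$ --- a contradiction. Note also that the global scattering determinant $\mathbf{C}(k:s)$ (which depends on $\Gamma$) does not appear anywhere in Proposition \ref{Detdrd}; the distribution $J$ and hence $\mathcal{LM}J$ involve only the local Knapp--Stein operators via the Harish--Chandra $c$-functions.

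The paper's proof takes a completely different route and does not use the determinant formula at all. It defines the completed function $\Xi(s,\sigma_k)=\exp\bigl(4\pi\vol(X)\int_0^s P_{\sigma_k}+sc_\Gamma\bigr)\Gamma(s+k)^{-2\kappa(X)}S(s,\sigma_k)$ and invokes \cite[Proposition 7.2]{Pf} together with \cite[equations 4.2, 4.10, Remark 4.3]{Pf} to obtain directly the identity
\[
\frac{\Xi'(s,\sigma_k)}{\Xi(s,\sigma_k)}+\frac{\Xi'(-s,\sigma_k)}{\Xi(-s,\sigma_k)}=-\frac{d}{ds}\log\mathbf{C}(k:s).
\]
This is the crucial external input: it is a resolvent-type trace identity (proved in \cite{Pf}) that relates the symmetrized logarithmic derivative of $S$ to the pointwise value of the scattering determinant, and it is precisely this step that your scheme cannot reproduce from Proposition \ref{Detdrd} alone. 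Integrating and fixing the constant via the parity of the order of $S$ at $s=0$ (from \cite[Theorem 9.2]{Pf}) then gives the stated functional equation.
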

\begin{proof}
Let
\begin{align*}
\Xi(s,\sigma_k)
:=\exp\left(4\pi{\rm{vol}}(X)\int_{0}^{s}{P_{\sigma_k}
(r)dr}
+sc_{\Gamma}\right)
\left(\Gamma\left(s+k\right)\right)^{-2\kappa(X)}\cdot
S(s,\sigma_k).
\end{align*}
We note that the Polynomial $Q(\sigma_k,\lambda)$ and the constants
$c_{j,l}(\sigma)$ 
occuring in \cite[Proposition 5.4]{Pf} are zero in the 3-dimensional case. Thus
if we combine \cite[Proposition 7.2]{Pf}, \cite[equation 4.2]{Pf} \cite[equation
4.10]{Pf} and \cite[Remark 4.3]{Pf}, we obtain
\begin{align*}
\frac{\Xi'(s,\sigma_k)}{\Xi(s,\sigma_k)}+\frac{\Xi'(-s,\sigma_k)}{\Xi(-s,
\sigma_k)}
=-\frac{d}{ds}\log\mathbf{C}(k:s).
\end{align*}
Hence the logarithmic derivative of
$\frac{\Xi(s,\sigma)}{\Xi(-s,\sigma)}\mathbf{C}(k:s)$
is zero and so this function is constant. Now the order of the singularity 
of the function $\Xi(s,\sigma_k)$ at $0$ is the same as the order of the
singularity of $S(s,\sigma)$ 
at $0$. This order is even by \cite[Theorem 9.2]{Pf}. Since
$P_{\sigma_k}(r)$ is 
an even polynomial, the proposition follows.
\end{proof}

The previous proposition implies the following functional equation for the 
symmetric Ruelle zeta function. 

\begin{prop}\label{Fgldrd}
Let $k\in\mathbb{N}$. Then the symmetric Ruelle zeta
function
$R_{\sym}(s,\sigma_k)$ satisfies
the functional equation
\begin{align*}
R_{\sym}(-s,\sigma_k)=&R_{\sym}(s,\sigma_k)\exp{\left(-\frac{8}{\pi}
\vol(X)s\right)}\frac{\mathbf{C}
(k:s-1)\mathbf{C}(k:s+1)}{\mathbf{C}(k+1:s)\mathbf{C}(k-1:s)}\\ 
&\cdot\frac{\mathbf{C}(k+1:0)\mathbf{C}(k-1:0)}{\mathbf{C}
(k:0)^2}.
\end{align*}

\end{prop}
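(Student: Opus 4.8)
The plan is to deduce the functional equation for $R_{\sym}(s,\sigma_k)$ from the one for the symmetric Selberg zeta functions (Proposition \ref{FGSZF}) together with the standard relation between Ruelle and Selberg zeta functions. First I would recall that, as in the closed case (see \cite{Wotzke}, \cite{Muellerzwei} or \cite[section 3]{Pf}), the twisted Ruelle zeta function $R(s,\sigma_k)$ can be written as an alternating product of Selberg zeta functions with shifted arguments coming from the action of $MA$ on $\Lambda^\bullet\bar{\nL}_\C$. Concretely, since $\dim\nL=2$ in our situation, one has a factorization of the form
\begin{align*}
R(s,\sigma_k)=\prod_{q=0}^{2}Z\bigl(s+q,\sigma_k\otimes\Lambda^q\bar{\nL}_\C\bigr)^{(-1)^q},
\end{align*}
and passing to the symmetrized versions, the identity $\sigma_k\otimes\Lambda^1\bar{\nL}_\C$ decomposes (on the level of $\hat M$) into $\sigma_{k+1}\oplus\sigma_{k-1}$ up to the $\xi$-twist, which is invisible to the Selberg zeta function. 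Thus
\begin{align*}
R_{\sym}(s,\sigma_k)=\frac{S(s,\sigma_k)\,S(s+2,\sigma_k)}{S(s+1,\sigma_{k+1})\,S(s+1,\sigma_{k-1})},
\end{align*}
possibly after a harmless relabelling; I would verify this precisely using Lemma \ref{Kost} and the relation between $Z$ and $S$.

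Next I would substitute $s\mapsto -s$ into this factorization and apply Proposition \ref{FGSZF} to each of the four Selberg factors. The key point is that the $\Gamma$-factors and the $\mathbf{C}(k:\cdot)$-factors telescope nicely: in the numerator one gets $\mathbf{C}(k:s)\mathbf{C}(k:s+2)$ (with $\Gamma$-factors $\Gamma(-s+k)^{2\kappa}/\Gamma(s+k)^{2\kappa}$ and the analogous one at $s+2$), and in the denominator $\mathbf{C}(k+1:s+1)\mathbf{C}(k-1:s+1)$ with the corresponding $\Gamma$-factors. Using the recursion $\Gamma(z+1)=z\Gamma(z)$ one checks that all $\Gamma$-factors coming from the four terms cancel against each other — this is exactly the mechanism by which the cusp $\Gamma$-contributions disappear from the Ruelle functional equation — and similarly one rewrites $\mathbf{C}(k:s+2)$ and $\mathbf{C}(k\pm1:s+1)$ in terms of $\mathbf{C}(k:s-1)$, $\mathbf{C}(k:s+1)$, $\mathbf{C}(k\pm1:s)$ by shifting the argument. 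The exponential factors combine: the volume terms from $\int_0^{\cdot}P_{\sigma_k}(r)dr$ at the four shifted points, together with the linear terms $c_\Gamma\cdot$, must collapse to $\exp(-\tfrac{8}{\pi}\vol(X)s)$; this is a short polynomial identity using $P_{\sigma_k}(r)=\tfrac{1}{4\pi^2}(k^2-r^2)$ and the fact that $\sum_q(-1)^q$(linear and $c_\Gamma$ terms)$=0$ because there are two plus-signs and two minus-signs.

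The main obstacle I anticipate is bookkeeping rather than conceptual: getting the argument shifts on the $\mathbf{C}$-functions exactly right, and in particular confirming that the shift relations between $\mathbf{C}(k:s\pm1)$, $\mathbf{C}(k:s\pm2)$ and $\mathbf{C}(k\pm1:s)$ are precisely the ones needed — this requires unwinding the definition of $\mathbf{C}(k:s)$ via the Harish-Chandra $c$-functions $c_{\nu_l}(\sigma_j,z)$ and the symmetrized scattering matrix, and using that a shift of the spectral parameter in a principal series corresponds, after the $\xi_\lambda$-twist, to passing between $\sigma_k$ and $\sigma_{k\pm1}$. I would also need to be careful about the normalizing constants $\mathbf{C}(k:0)$: the claimed identity has $\mathbf{C}(k+1:0)\mathbf{C}(k-1:0)/\mathbf{C}(k:0)^2$, which should appear automatically since each Selberg functional equation carries a $1/\mathbf{C}(\cdot:0)$ normalization, and the numerator/denominator count gives exactly this ratio. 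Once the shift relations are pinned down, the rest is the polynomial identity for the exponential factor and the $\Gamma$-cancellation, both of which are routine. Finally, since both sides are meromorphic on all of $\C$, proving the identity for $s$ in a region where all the Euler products converge (e.g. $\Real(s)$ large) suffices by analytic continuation.
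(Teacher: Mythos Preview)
Your overall strategy is exactly the paper's: factor $R_{\sym}(s,\sigma_k)$ as an alternating product of symmetric Selberg zeta functions and apply Proposition~\ref{FGSZF} to each factor. The difference is that your factorization is shifted by one unit. The paper uses the \emph{centered} identity
\[
R_{\sym}(s,\sigma_k)=\frac{S(s+1,\sigma_k)\,S(s-1,\sigma_k)}{S(s,\sigma_{k+1})\,S(s,\sigma_{k-1})},
\]
(cf.\ \cite[Lemma 3.1]{Muller2}), whereas your version has arguments $s,\,s+2,\,s+1,\,s+1$. With the centered factorization, applying Proposition~\ref{FGSZF} to the four factors produces directly the ratio $\frac{\mathbf{C}(k:s-1)\mathbf{C}(k:s+1)}{\mathbf{C}(k+1:s)\mathbf{C}(k-1:s)}\cdot\frac{\mathbf{C}(k+1:0)\mathbf{C}(k-1:0)}{\mathbf{C}(k:0)^2}$; the $\Gamma$-factors cancel \emph{identically} (no functional equation of $\Gamma$ is needed, since numerator and denominator give literally the same four $\Gamma$-values); the $c_\Gamma$-terms cancel because $(s+1)+(s-1)-s-s=0$; and the volume contribution is the polynomial identity
\[
\int_0^{s+1}P_{\sigma_k}+\int_0^{s-1}P_{\sigma_k}-\int_0^{s}P_{\sigma_{k+1}}-\int_0^{s}P_{\sigma_{k-1}}=-\frac{s}{\pi^2},
\]
which yields $\exp(-\tfrac{8}{\pi}\vol(X)s)$.

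The genuine danger in your write-up is the anticipated ``shift relations'' for the $\mathbf{C}$-functions. These do not exist: the $\mathbf{C}(k:s)$ are built from the full scattering matrix of $\Gamma$, not merely from Harish-Chandra $c$-functions, and there is no identity relating $\mathbf{C}(k:s+2)$ or $\mathbf{C}(k\pm1:s+1)$ to $\mathbf{C}(k:s\pm1)$ and $\mathbf{C}(k\pm1:s)$. If you insisted on your shifted factorization and tried to derive such relations, the argument would break. The fix is simply to use the centered factorization from the start; then no such relations are needed and the proof is a two-line computation, exactly as in the paper.
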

\begin{proof}
The same argument as in \cite[Lemma 3.1]{Muller2} gives 
\begin{align*}
R_{\sym}(s,\sigma_k)=\frac{S(s+1,\sigma_k)S(s-1,\sigma_k)}{S(s,
\sigma_{k+1})S(s,\sigma_{k-1})}.
\end{align*}
Moreover, using \eqref{Plnchrl} we compute
\begin{align*}
\int_{0}^{s+1}P_{\sigma_k}(r)dr+\int_{0}^{s-1}P_{\sigma_k}(r)dr-\int_{0}^{s}P_
{\sigma_{k+1}}(r)dr-\int_{0}^{s}P_{\sigma_{k-1}}(r)dr=
-\frac{s}{\pi^2}.
\end{align*}
Thus the proposition follows from Proposition
\ref{FGSZF}.
\end{proof}

To prove Theorem \ref{Theorem2}, we will also need the following
proposition.

\begin{prop}\label{PropordR}
Let $m\in\mathbb{N}$, $m\geq 3$. Then 
\begin{align*}
R_{\rho(m)}(s)R_{\rho(m)_{\theta}}(s)=&R_{\rho(2)}(s)R_{\rho(2)_\theta}(s)
\frac{\mathbf{C}(m:m+1-s)}{
\mathbf{C}(m+1:m-s)}\frac{\mathbf{C}(3:2-s)}{\mathbf{C}(2:3-s)}\frac{\mathbf{C}
(m+1:0)}{\mathbf{C}(m:0)}\frac{\mathbf{C}
(2:0)}{\mathbf{C}(3:0)}\\ &\cdot\prod_
{
k=3}^mR_{\sym}(k-s,\sigma_k)R_{\sym}(k+s,\sigma_k)\exp{\left(-\frac{8}{\pi}
\vol{(X)}(k-s)\right)}.
\end{align*}
\end{prop}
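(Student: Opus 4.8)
The plan is to derive the formula by iterating the factorization of the Ruelle zeta function into Selberg zeta functions together with the functional equation from Proposition~\ref{FGSZF} (equivalently Proposition~\ref{Fgldrd}). First I would recall, from the argument of \cite[Lemma~3.1]{Muller2} that was already used in the proof of Proposition~\ref{Fgldrd}, the identity
\begin{align*}
R_{\rho(m)}(s)R_{\rho(m)_\theta}(s)=\prod_{k}S(s+\lambda_{\rho(m),k}-?,\dots)
\end{align*}
—more precisely, by Lemma~\ref{Kost} and Remark~\ref{rmrkkost}, the product $R_{\rho(m)}(s)R_{\rho(m)_\theta}(s)$ is expressed as a quotient of symmetric Selberg zeta functions with shifted arguments, with the shifts governed by the weights $(\sigma_m,\lambda=m+1)$ and $(\sigma_{m+1},\lambda=m)$. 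This is exactly the closed-case bookkeeping of Wotzke \cite{Wotzke} and M\"uller \cite{Muellerzwei}, carried over verbatim since it only uses Lemma~\ref{Kost}. From this one obtains, for two different values $m$ and $2$, an expression for the quotient
$$\frac{R_{\rho(m)}(s)R_{\rho(m)_\theta}(s)}{R_{\rho(2)}(s)R_{\rho(2)_\theta}(s)}$$
as a ratio of $S(\,\cdot\,,\sigma_j)$'s with shifted arguments, where the $\sigma_j$ occurring are $\sigma_k$ for $k$ between $3$ and $m$ (the intermediate ones telescoping appropriately).

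Next I would convert that ratio of shifted Selberg zeta values into symmetric Ruelle zeta functions. Using again $R_{\sym}(s,\sigma_k)=\dfrac{S(s+1,\sigma_k)S(s-1,\sigma_k)}{S(s,\sigma_{k+1})S(s,\sigma_{k-1})}$ from the proof of Proposition~\ref{Fgldrd}, one recognizes the telescoping product $\prod_{k=3}^m R_{\sym}(\,\cdot\,,\sigma_k)$ (with a second factor coming from the $\theta$-twist, i.e.\ the reflected argument). The essential point is that the shifts dictated by Lemma~\ref{Kost}/Remark~\ref{rmrkkost} are precisely $\pm1$ around integer points, so the Selberg factors reorganize into Ruelle factors at the arguments $k-s$ and, for the twisted representation, $k+s$. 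After this step one is left with a product $\prod_{k=3}^m R_{\sym}(k-s,\sigma_k)R_{\sym}(k+s,\sigma_k)$ together with boundary terms at the two ends of the telescope, namely Selberg (or scattering) contributions involving the indices $m,m+1$ on one side and $2,3$ on the other.

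Then I would identify those boundary terms. For $R_{\sym}(k+s,\sigma_k)$ one cannot directly telescope because the argument is reflected; here I would apply the functional equation of Proposition~\ref{Fgldrd} to replace each $R_{\sym}(k+s,\sigma_k)$-type contribution at the "wrong" argument by $R_{\sym}$ at the other argument times the explicit factor $\exp(-\frac{8}{\pi}\vol(X)\,\cdot\,)$ and the ratio of $\mathbf{C}$-functions $\dfrac{\mathbf{C}(k:\cdot-1)\mathbf{C}(k:\cdot+1)}{\mathbf{C}(k+1:\cdot)\mathbf{C}(k-1:\cdot)}\cdot\dfrac{\mathbf{C}(k+1:0)\mathbf{C}(k-1:0)}{\mathbf{C}(k:0)^2}$. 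Along the telescoping product, the $\mathbf{C}(j:\cdot)$ factors at interior indices cancel in pairs (the $\mathbf{C}(k\pm1:\cdot)$ from one step meeting the $\mathbf{C}(k:\cdot)$ from the neighbouring step), and the $\mathbf{C}(j:0)$ constants likewise collapse, leaving exactly the boundary scattering factors $\dfrac{\mathbf{C}(m:m+1-s)}{\mathbf{C}(m+1:m-s)}$, $\dfrac{\mathbf{C}(3:2-s)}{\mathbf{C}(2:3-s)}$ and the constants $\dfrac{\mathbf{C}(m+1:0)}{\mathbf{C}(m:0)}\dfrac{\mathbf{C}(2:0)}{\mathbf{C}(3:0)}$. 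The $\vol(X)$-terms accumulate into $\exp(-\frac{8}{\pi}\vol(X)\sum_{k=3}^m(k-s))$, matching the claimed product form. Since the functional equation and the factorization are identities of meromorphic functions on all of $\C$ (granted by \cite[Theorem~1.1, Corollary~1.2]{Pf}), the resulting identity holds for all $s$ where both sides are defined.

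The main obstacle I anticipate is purely combinatorial rather than analytic: one must carefully track which $\mathbf{C}$-functions survive the telescoping and verify that the exponents of the $\vol(X)$-terms and the arguments $k\pm s$ line up exactly as stated, using the precise shift data $\sigma_{\rho(m),0}=\sigma_m,\ \lambda_{\rho(m),0}=m+1$, $\sigma_{\rho(m),1}=\sigma_{m+1},\ \lambda_{\rho(m),1}=m$ from Remark~\ref{rmrkkost}. A secondary point requiring a little care is the behaviour at the indices $k=2,3$ where the telescope starts, since $\sigma_1,\sigma_2$ enter and one must confirm that Lemma~\ref{branching} still applies (it does, as $k-1\geq 1$ there); this is what produces the asymmetric-looking boundary factors at the $2,3$ end versus the $m,m+1$ end.
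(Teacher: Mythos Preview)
Your overall strategy matches the paper's: obtain the product identity
\[
R_{\rho(m)}(s)R_{\rho(m)_\theta}(s)=R_{\rho(2)}(s)R_{\rho(2)_\theta}(s)\prod_{k=3}^{m} R_{\sym}(s+k,\sigma_k)\,R_{\sym}(s-k,\sigma_k),
\]
and then apply the functional equation of Proposition~\ref{Fgldrd} termwise and telescope the resulting $\mathbf{C}$-factors. The paper simply quotes this product identity from \cite[equations (3.14), (8.2)]{Muller2}, whereas you propose to rederive it from the Selberg factorizations $R_{\rho(m)}R_{\rho(m)_\theta}=\frac{S(s+m+1,\sigma_m)S(s-m-1,\sigma_m)}{S(s+m,\sigma_{m+1})S(s-m,\sigma_{m+1})}$ and $R_{\sym}(s,\sigma_k)=\frac{S(s+1,\sigma_k)S(s-1,\sigma_k)}{S(s,\sigma_{k+1})S(s,\sigma_{k-1})}$; that telescoping does work and yields the same intermediate formula, so this is a harmless (if longer) variant.

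There is, however, a confusion in your narrative that you should fix before writing it up. The Selberg reorganization produces $R_{\sym}(s+k,\sigma_k)\,R_{\sym}(s-k,\sigma_k)$, not $R_{\sym}(k-s,\sigma_k)\,R_{\sym}(k+s,\sigma_k)$; the factor $R_{\sym}(s+k,\sigma_k)=R_{\sym}(k+s,\sigma_k)$ is already in the form appearing in the statement and needs no functional equation. It is the \emph{other} factor, $R_{\sym}(s-k,\sigma_k)$, to which you must apply Proposition~\ref{Fgldrd} (with argument $k-s$) to obtain $R_{\sym}(k-s,\sigma_k)$ together with the exponential $\exp\bigl(-\tfrac{8}{\pi}\vol(X)(k-s)\bigr)$ and the $\mathbf{C}$-ratios. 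Once this is straightened out, your telescoping of the $\mathbf{C}(k:\,\cdot\,)$ and $\mathbf{C}(k:0)$ factors is correct and leaves exactly the boundary terms $\frac{\mathbf{C}(m:m+1-s)}{\mathbf{C}(m+1:m-s)}\frac{\mathbf{C}(3:2-s)}{\mathbf{C}(2:3-s)}$ and $\frac{\mathbf{C}(m+1:0)}{\mathbf{C}(m:0)}\frac{\mathbf{C}(2:0)}{\mathbf{C}(3:0)}$ as claimed.
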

\begin{proof}
Let $m\in\mathbb{N}$, $m\geq 3$. Applying \cite[equation (3.14)]{Muller2},
we
can symmetrize \cite[equation (8.2)]{Muller2} and
obtain
\begin{align*}
R_{\rho(m)}(s)R_{\rho(m)_\theta}(s)=R_{\rho(2)}(s)R_{{\rho(2)}_\theta}(s)\prod_{
k=3}^{m} R_{\sym}(s+k,\sigma_k)R_{\sym}(s-k,\sigma_k).
\end{align*}
Thus together with proposition \ref{Fgldrd}, the proposition follows.
\end{proof}

\section{Proof of the main results}\label{secpr}
In this section we prove our main results. Let $m\in\mathbb{N}$.
Arguing as in \cite[Proposition 3.5]{Muller2} it follows that
\begin{align}\label{GlRSdrd}
R_{\rho(m)}(s)R_{\rho(m)_{\theta}}(s)=\frac{S(s+m+1,\sigma_m)S(s-m-1,\sigma_m)}{
S(s+m,\sigma_{m+1})S(s-m,\sigma_{m+1})}.
\end{align}
Now we express the analytic torsion $T_X(\rho(m))$ by the graded determinants 
associated to the operators $A(\sigma)$ which were introduced section
\ref{secdetfrml}. Namely, we have the following Proposition which is a
generalization 
of \cite[equation 7.28]{Muller2} to the noncompact case. 
\begin{prop}\label{Torsdreid}
Let $m\in\mathbb{N}$. Then one has
\begin{align*}
T_X(\rho(m))^2=\frac{{\det}_{\gr}{
\left(A(\sigma_m)+(m+1)^2\right)}}{{\det}_{\gr}{
\left(A(\sigma_{m+1})+m^2\right)}}.
\end{align*}
\end{prop}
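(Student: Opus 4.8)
The plan is to reduce the statement to the definitions of the analytic torsion and the graded determinants, and then to match the two sides term by term using the representation-theoretic identity of Lemma \ref{Kost} together with Lemma \ref{branching}. First I would recall that, by definition, $T_X(\rho(m))=\prod_{p=0}^{3}\det\Delta_p(\rho(m))^{(-1)^{p+1}p/2}$ and that $\Delta_p(\rho(m))$ acts on $C^\infty(\Gamma\backslash G,\nu_p(\rho(m)))$ as $A_{\nu_p(\rho(m))}+\rho(m)(\Omega)$, so that $\det\Delta_p(\rho(m))=\det(A_{\nu_p(\rho(m))}+\rho(m)(\Omega))$. Thus $\log T_X(\rho(m))^2=\sum_{p=0}^3(-1)^{p+1}p\log\det(A_{\nu_p(\rho(m))}+\rho(m)(\Omega))$, and I want to identify the right-hand side with $\log{\det}_{\gr}(A(\sigma_m)+(m+1)^2)-\log{\det}_{\gr}(A(\sigma_{m+1})+m^2)$.

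The key computation is to evaluate the virtual $K$-representation $\sum_{p=0}^{3}(-1)^p p\,\nu_p(\rho(m))=\sum_{p=0}^3(-1)^p p\,\Lambda^p\Ad^*\otimes\rho(m)$ and compare it, after decomposing into $K$-types, with the definition of $A(\sigma)$ in \eqref{Definition des Hilfsbundels}. By a standard argument (the same one used for \cite[equation 7.28]{Muller2}), one rewrites $\sum_p (-1)^p p\,\Lambda^p\Ad^*$ via the restriction to $MA$ and Lemma \ref{Kost}: in $R(MA)$ one has $\sum_{q=0}^2(-1)^q q\,\mu_q\otimes\rho(m)|_{MA}=\sigma_m\otimes\xi_{m+1}-\sigma_{m+1}\otimes\xi_m+\sigma_{-m}\otimes\xi_{-(m+1)}-\sigma_{-(m+1)}\otimes\xi_{-m}$, which groups into $(\sigma_m+w_0\sigma_m)$ with shift parameter $m+1$ and $-(\sigma_{m+1}+w_0\sigma_{m+1})$ with shift parameter $m$; using Lemma \ref{branching} to pass from $\sigma+w_0\sigma$ to $\sum_\nu m_\nu(\sigma)\nu|_M$ then turns this into $\sum_\nu m_\nu(\sigma_m)\nu - \sum_\nu m_\nu(\sigma_{m+1})\nu$ at the level of $K$-types. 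Matching the contribution of each $\nu$ with multiplicity and keeping track of the Casimir eigenvalues shows that the graded sum of the operators $A_{\nu_p(\rho(m))}+\rho(m)(\Omega)$ splits, up to the grading sign, as the operator $A(\sigma_m)+(m+1)^2$ in the $+$ part and $A(\sigma_{m+1})+m^2$ in the $-$ part; here one uses $c(\sigma_k)=k^2-1$ together with the value $\rho(m)(\Omega)$ to see that $c(\sigma_m)+(\text{shift from }\xi_{m+1})=(m+1)^2$ and $c(\sigma_{m+1})+(\text{shift from }\xi_m)=m^2$, exactly as recorded in Remark \ref{rmrkkost}.

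Once this identification of operators (with multiplicities and signs) is in place, the equality of determinants follows: the zeta functions $\xi_{\nu}(s,z)$ depend only on the operator $A_\nu$ and the shift $s$, so the graded product over $p$ of $\det(A_{\nu_p(\rho(m))}+\rho(m)(\Omega))^{(-1)^{p+1}p}$ collapses to $\det{}_{\gr}(A(\sigma_m)+(m+1)^2)/\det{}_{\gr}(A(\sigma_{m+1})+m^2)$ by the very definition of ${\det}_{\gr}$ in section \ref{secdetfrml}. I would also note that all the determinants involved are well-defined: by \cite[Lemma 7.1(2), Lemma 7.3]{MP2} one has $\rho(m)(\Omega)-b_{\nu_p(\rho(m))}\geq\frac14>0$, and by Lemma \ref{Lemdet} the shifts $(m+1)^2$ and $m^2$ exceed the relevant $b(\nu)$, so Proposition \ref{Ancontxi} applies.

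The main obstacle is the bookkeeping in the representation-theoretic step: one must carry out the passage from the virtual $K$-module $\sum_p(-1)^p p\,\Lambda^p\Ad^*\otimes\rho(m)$ to a genuine $\Z$-linear combination of the $\nu_l$ with the correct Casimir shifts, and verify that the signs $m_\nu(\sigma_m)$, $-m_\nu(\sigma_{m+1})$ produced by Lemma \ref{branching} are precisely the grading signs of $E(\sigma_m)$ and $E(\sigma_{m+1})$. This is where the delicate point lies — the $p=0$ and $p=3$ terms contribute nothing because of the factor $p$, and one must check that the $W$-invariance ($\sigma\leftrightarrow w_0\sigma$) built into the symmetric picture is compatible with the decomposition. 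Since $\rho(m)\neq\rho(m)_\theta$, the product $R_{\rho(m)}R_{\rho(m)_\theta}$ and hence the combination $(\sigma+w_0\sigma)$ appear naturally, and this is exactly the mechanism that makes the two "halves" combine into $A(\sigma_m)$ and $A(\sigma_{m+1})$; the argument is otherwise a routine adaptation of \cite[section 7]{Muller2} to the present normalization, the non-compactness playing no role beyond ensuring the determinants are the regularized ones of section \ref{sectrdet}.
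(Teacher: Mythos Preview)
Your approach is essentially the same as the paper's. The paper compresses the representation-theoretic bookkeeping you outline into the single identity $k_t^{\rho(m)}=e^{-t(m+1)^2}h_t^{\sigma_m}-e^{-tm^2}h_t^{\sigma_{m+1}}$ (obtained from Lemma \ref{Kost}, Remark \ref{rmrkkost} and \cite[Proposition 8.2]{MP2}) and then simply invokes \eqref{eqk} and \eqref{eqdet}; working at the level of the heat-kernel test functions rather than the virtual $K$-module makes the passage from the $R(MA)$ identity of Lemma \ref{Kost} to the determinant identity immediate and sidesteps the bookkeeping you flag as ``the main obstacle.''
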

\begin{proof}
Let $k_t^{\rho(m)}$ be as in \eqref{defk}. Then by Lemma \ref{Kost} and Remark
\ref{rmrkkost},
as a special case of \cite[Proposition 8.2]{MP2}
one has
$k_t^{\rho(m)}=e^{-t(m+1)^2}h_t^{\sigma_m}-e^{-tm^2}h_t^{\sigma_{m+1}}$. 
Applying \eqref{eqk} and \eqref{eqdet}, the proposition follows. 
\end{proof}
In order to relate the behaviour of $R_\rho(m) R_{\rho(m)_{\theta}}$ at $0$ 
to the analytic torsion $T_X(\rho(m))$, we want to apply the
determinant formula for 
the symmetric Selberg zeta function from Proposition \ref{Detdrd} to the right
hand side of \eqref{GlRSdrd} and 
combine it with Proposition \ref{Torsdreid}.

However, in contrast to a closed hyperbolic
manifold, this 
is not possible directly since the determinant formula for the symmetric Selberg
zeta
function is
valid only for $s\in\C$ with $\Real(s)>0$, $\Real(s^2)>0$. Thus we first have to
apply the
functional 
equation from Proposition \ref{FGSZF}.
We obtain the following
proposition.
\begin{prop}\label{PropRSneu} For $m\in\mathbb{N}$ one has
\begin{align*}
R_{\rho(m)}(s)R_{\rho(m)_\theta}(s)=&e^{2c_\Gamma}\frac{S(-s+m+1,\sigma_{m}
)S(s+m+1,\sigma_{m
})
\mathbf{C}(m+1:0)}
{
S(-s+m,\sigma_{m+1})S(s+m,\sigma_{m+1})\mathbf{C}(m:0)}\\
&\cdot\frac{\mathbf{C}(m:m+1-s)(\Gamma(s-1))^{
2\kappa(X)}\exp{\left(8\pi
\vol(X)\int_0^{-s+m+1}P_{\sigma_{m}}(r)dr\right)}}{\mathbf{C}
(m+1:m-s)(\Gamma(s+1))^{
2\kappa(X)
}\exp{\left(8\pi
\vol(X)\int_0^{-s+m}P_{\sigma_{m+1}}(r)dr\right)}} .
\end{align*}
\end{prop}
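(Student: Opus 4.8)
The plan is to start from the factorization \eqref{GlRSdrd}, which expresses $R_{\rho(m)}(s)R_{\rho(m)_\theta}(s)$ as a ratio of symmetric Selberg zeta functions evaluated at the four shifted arguments $s+m+1$, $s-m-1$, $s+m$, $s-m$. The two arguments with a minus sign in front of $s$ are problematic: if we want to eventually apply the determinant formula of Proposition \ref{Detdrd}, we need the argument to lie in the region $\{\Real(\cdot)>0,\ \Real((\cdot)^2)>0\}$, and $S(s-m-1,\sigma_m)$, $S(s-m,\sigma_{m+1})$ do not satisfy this near $s=0$. So the first step is to rewrite $S(s-m-1,\sigma_m)$ as $S(-( -s+m+1),\sigma_m)$ and similarly $S(s-m,\sigma_{m+1}) = S(-(-s+m),\sigma_{m+1})$, and then invoke the functional equation from Proposition \ref{FGSZF} to flip the sign of the argument in each of these two factors.

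Concretely, I would apply Proposition \ref{FGSZF} with $s$ replaced by $-s+m+1$ for the numerator factor (with $k=m$) and with $s$ replaced by $-s+m$ for the denominator factor (with $k=m+1$). Each application produces: the same Selberg zeta with the sign flipped, an exponential factor $\exp\bigl(8\pi\vol(X)\int_0^{-s+m+1}P_{\sigma_m}(r)\,dr\bigr)$ (resp.\ with $m+1$, $m$), a factor $2c_\Gamma(-s+m+1)$ (resp.\ $2c_\Gamma(-s+m)$) in the exponent, a ratio of Gamma-factors $\Gamma(-(-s+m+1)+m)^{2\kappa(X)}/\Gamma((-s+m+1)+m)^{2\kappa(X)} = \Gamma(s-1)^{2\kappa(X)}/\Gamma(2m+1-s)^{2\kappa(X)}$ (and analogously for the other), and the scattering ratio $\mathbf{C}(m:-s+m+1)/\mathbf{C}(m:0)$ (resp.\ $\mathbf{C}(m+1:-s+m)/\mathbf{C}(m+1:0)$). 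Then I would assemble all these pieces, taking the numerator contributions over the denominator contributions. The $c_\Gamma$ terms combine: $2c_\Gamma(-s+m+1) - 2c_\Gamma(-s+m) = 2c_\Gamma$, which matches the $e^{2c_\Gamma}$ prefactor in the statement. The $\mathbf{C}(m:0)$ and $\mathbf{C}(m+1:0)$ denominators from the two functional equations recombine into the single ratio $\mathbf{C}(m+1:0)/\mathbf{C}(m:0)$ displayed. The scattering factors $\mathbf{C}(m:m+1-s)$ and $\mathbf{C}(m+1:m-s)$ appear exactly as written.

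**The main bookkeeping obstacle** will be reconciling the Gamma-factors: the naive output of the functional equation gives $\Gamma(s-1)^{2\kappa(X)}/\Gamma(2m+1-s)^{2\kappa(X)}$ in the numerator and $\Gamma(s-1)^{2\kappa(X)}/\Gamma(2m-1-s)^{2\kappa(X)}$ in the denominator, whereas the statement asks for the cleaner $\Gamma(s-1)^{2\kappa(X)}/\Gamma(s+1)^{2\kappa(X)}$. The reconciliation must come from the fact that $S(\pm s+m+1,\sigma_m)$ and $S(\pm s+m,\sigma_{m+1})$ in \eqref{GlRSdrd} are themselves being written in terms of the already-sign-flipped quantities, so one should be careful whether \eqref{GlRSdrd} is applied before or after the flip. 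The cleanest route is: first flip only the two offending factors using Proposition \ref{FGSZF}, leaving $S(s+m+1,\sigma_m)$ and $S(s+m,\sigma_{m+1})$ untouched, so that \eqref{GlRSdrd} directly yields a formula whose numerator has $S(-s+m+1,\sigma_m)S(s+m+1,\sigma_m)$ and denominator $S(-s+m,\sigma_{m+1})S(s+m,\sigma_{m+1})$ — exactly the shape of the claimed identity — and then the Gamma-factors that survive are precisely $\Gamma(s-1)^{2\kappa(X)}$ from the $k=m$ flip (argument $-s+m+1$ gives $\Gamma(s-1)$ since $-(-s+m+1)+m = s-1$) over $\Gamma(s+1)^{2\kappa(X)}$ from the $k=m+1$ flip (argument $-s+m$ gives $\Gamma(s+1)$ since $-(-s+m)+(m+1) = s+1$). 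Thus no further Gamma manipulation is needed, and the two exponential $\vol(X)$ integrals appear with upper limits $-s+m+1$ and $-s+m$ and integrands $P_{\sigma_m}$, $P_{\sigma_{m+1}}$ respectively, matching the display. The proof is then a one-line invocation of Proposition \ref{FGSZF} twice, combined with \eqref{GlRSdrd}, with all the constants tracked as above.
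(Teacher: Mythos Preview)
Your approach is exactly the paper's: apply Proposition~\ref{FGSZF} once with $k=m$ at the argument $-s+m+1$ and once with $k=m+1$ at the argument $-s+m$, then combine with \eqref{GlRSdrd}. The only point you gloss over in your bookkeeping is that each flip also produces a factor $\Gamma(s'+k)^{-2\kappa(X)}$, which in both cases equals $\Gamma(2m+1-s)^{-2\kappa(X)}$ and hence cancels in the ratio, leaving precisely $\Gamma(s-1)^{2\kappa(X)}/\Gamma(s+1)^{2\kappa(X)}$ as claimed.
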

\begin{proof}
By proposition \ref{FGSZF} we have
\begin{align*}
\frac{S(s-m-1,\sigma_{m})}{S(s-m,\sigma_{m+1})}=&e^{2c_\Gamma}\frac{S(-s+m+1,
\sigma_{m})\exp{\left(8\pi
\vol(X)\int_0^{-s+m+1}P_{\sigma_{m}}(r)dr\right)}}{S(-s+m,\sigma_{m+1})\exp{
\left(8\pi
\vol(X)\int_0^{-s+m}P_{\sigma_{m+1}}(r)dr\right)}}\\
&\cdot\frac{\mathbf{C}(m+1:0)\mathbf{C}(m:m+1-s)(\Gamma(s-1))^{
2\kappa(X)}}{\mathbf{C}(m:0)\mathbf{C}(m+1:m-s)(\Gamma(s+1))^{2\kappa(X)}}.
\end{align*}
Applying \eqref{GlRSdrd}, the proposition follows. 
\end{proof}

Now we can prove Proposition \ref{Prop1}. We shall state the proposition 
also for $m+\frac{1}{2}$, $m\in\mathbb{N}$. 
The proof remains the same if one makes the appropriate modifications 
in section \ref{secdetfrml} and in Proposition \ref{PropordR},
Proposition \ref{Torsdreid} and Proposition \ref{PropRSneu}.  For
$m\in\mathbb{N}$ 
we define 
\begin{align}\label{Defc}
c(m+1/2):=\frac{\prod_{j=0}^{m-1}\sqrt{(m+3/2)^2+(m+1/2)^2-(j+1/2)^2
}+m+1/2
}{\prod_{j=0}^{m
}\sqrt{(m+3/2)^2+(m+1/2)^2-(j+1/2)^2}+m+3/2}.
\end{align}
Then we have the following proposition.
\begin{prop}\label{RundTors}
For $m\in\mathbb{N}$ we define the constant $c(m)$s and $c(m+\frac{1}{2})$ as 
in Theorem \ref{Theorem1} resp. equation \eqref{Defc}.
Then one has
\begin{align*}
&{T_X(\rho(m))}^4
\\ =&c(m)^{4\kappa(X)}\frac{\mathbf{C}(m:0)}{\mathbf{C}(m+1:0)}\lim_{s\to
0}\left(R_{\rho(m)}(s)R_{\rho(m)_{\theta}}(s)\frac{\mathbf{C}(m+1:m-s)}{\mathbf{
C}(m:m+1-s)}(\Gamma(s-1))^{
-2\kappa(X)}\right)
\end{align*}
and
\begin{align*}
{T_X(\rho(m+1/2))}^4=&c(m+1/2)^{4\kappa(X)}\frac{\mathbf{C}(m+1/2:0)}{\mathbf{C}
(m+3/2:0)}
\cdot
\lim_{s\to
0}\biggl(R_{\rho(m+1/2)}(s)R_{\rho(m+1/2)_{\theta}}(s)\\ &\frac{\mathbf{C}
(m+3/2:m+1/2-s)}{\mathbf{C}(m+1/2:m+3/2-s)}(\Gamma(s-1))^{-
2\kappa(X)}\biggr).
\end{align*}
\end{prop}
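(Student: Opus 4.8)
The plan is to combine Proposition \ref{PropRSneu}, the determinant formula of Proposition \ref{Detdrd}, Proposition \ref{Torsdreid} and the explicit value of $\mathcal{LM}J$ from Proposition \ref{PropJ}, carefully tracking all constants in the limit $s\to 0$. I describe the argument for $\rho(m)$; the case $\rho(m+1/2)$ is identical once the modifications of section \ref{secdetfrml} are made in Propositions \ref{PropordR}, \ref{Torsdreid} and \ref{PropRSneu}, and the constant \eqref{Defc} then arises exactly as $c(m)$ does below.

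First I would take the identity of meromorphic functions in Proposition \ref{PropRSneu} and multiply both sides by $\frac{\mathbf{C}(m+1:m-s)}{\mathbf{C}(m:m+1-s)}(\Gamma(s-1))^{-2\kappa(X)}$. On the right hand side the factors $\mathbf{C}(m:m+1-s)$, $\mathbf{C}(m+1:m-s)$ and $(\Gamma(s-1))^{2\kappa(X)}$ cancel, leaving
\begin{align*}
&R_{\rho(m)}(s)R_{\rho(m)_\theta}(s)\frac{\mathbf{C}(m+1:m-s)}{\mathbf{C}(m:m+1-s)}(\Gamma(s-1))^{-2\kappa(X)}\\
=\;&e^{2c_\Gamma}\,\frac{S(-s+m+1,\sigma_m)S(s+m+1,\sigma_m)\,\mathbf{C}(m+1:0)}{S(-s+m,\sigma_{m+1})S(s+m,\sigma_{m+1})\,\mathbf{C}(m:0)}\cdot\frac{\exp\bigl(8\pi\vol(X)\int_0^{-s+m+1}P_{\sigma_m}(r)\,dr\bigr)}{(\Gamma(s+1))^{2\kappa(X)}\exp\bigl(8\pi\vol(X)\int_0^{-s+m}P_{\sigma_{m+1}}(r)\,dr\bigr)}.
\end{align*}
Since $m\geq 1$ one has $\Real(m+1)>0$, $\Real((m+1)^2)>0$, $\Real(m)>0$ and $\Real(m^2)>0$, so by Proposition \ref{Detdrd} the functions $s\mapsto S(s,\sigma_m)$ near $s=m+1$ and $s\mapsto S(s,\sigma_{m+1})$ near $s=m$ equal products of factors that are holomorphic and non-vanishing there (the graded determinants lie in $\C^*$ by Proposition \ref{Ancontxi} and Lemma \ref{Lemdet}, while $\mathbf{C}(m+1:0),\mathbf{C}(m:0)\in\{\pm1\}$ since $\mathbf{C}(k:0)^2=1$). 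Hence the right hand side is holomorphic near $s=0$, so the limit exists, the Selberg zeta factors tending to $S(m+1,\sigma_m)^2$ and $S(m,\sigma_{m+1})^2$ while $(\Gamma(s+1))^{2\kappa(X)}\to 1$.

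Next I would insert the determinant formula of Proposition \ref{Detdrd} at $s=m+1$, $k=m$ and at $s=m$, $k=m+1$. In both cases $s+k=2m+1$, so in the ratio $S(m+1,\sigma_m)^2/S(m,\sigma_{m+1})^2$ the factors $e^{\kappa(X)C_0}$, $\Gamma^{2\kappa(X)}(2m+1)$ and $(2m+1)^{\kappa(X)}$ cancel; the factor $\exp\bigl(-2(m+1)c_\Gamma+2mc_\Gamma\bigr)=e^{-2c_\Gamma}$ cancels the prefactor $e^{2c_\Gamma}$; and the exponentials $\exp\bigl(\mp 8\pi\vol(X)\int_0^{m+1}P_{\sigma_m}\bigr)$ and $\exp\bigl(\pm 8\pi\vol(X)\int_0^{m}P_{\sigma_{m+1}}\bigr)$ cancel the surviving Plancherel terms. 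What remains is
\begin{align*}
\lim_{s\to 0}\Bigl(R_{\rho(m)}(s)R_{\rho(m)_\theta}(s)\tfrac{\mathbf{C}(m+1:m-s)}{\mathbf{C}(m:m+1-s)}(\Gamma(s-1))^{-2\kappa(X)}\Bigr)=\frac{\bigl({\det}_{\gr}(A(\sigma_m)+(m+1)^2)\bigr)^2}{\bigl({\det}_{\gr}(A(\sigma_{m+1})+m^2)\bigr)^2}\cdot\frac{\mathbf{C}(m+1:0)}{\mathbf{C}(m:0)}\cdot e^{2\mathcal{LM}J(m+1,\sigma_m)-2\mathcal{LM}J(m,\sigma_{m+1})},
\end{align*}
and by Proposition \ref{Torsdreid} the determinant quotient equals $T_X(\rho(m))^4$.

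It then only remains to show $\exp\bigl(2\mathcal{LM}J(m+1,\sigma_m)-2\mathcal{LM}J(m,\sigma_{m+1})\bigr)=c(m)^{-4\kappa(X)}$. Substituting the closed formula for $\mathcal{LM}J$ from Proposition \ref{PropJ}, the summands $-\kappa(X)\log(s+k)$ become $-\kappa(X)\log(2m+1)$ in both cases and cancel, while the sum over $1\leq j<m$ with summand $\log(\sqrt{(m+1)^2+m^2-j^2}+m)$, the sum over $1\leq j\leq m$ with summand $\log(\sqrt{(m+1)^2+m^2-j^2}+(m+1))$, and the two terms $\log(\sqrt{(m+1)^2+m^2}+m)$, $\log(\sqrt{(m+1)^2+m^2}+(m+1))$ reassemble into exactly $-2\kappa(X)\log c(m)$ with $c(m)$ as in Theorem \ref{Theorem1}. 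Thus the limit equals $T_X(\rho(m))^4\,c(m)^{-4\kappa(X)}\,\mathbf{C}(m+1:0)/\mathbf{C}(m:0)$, and multiplying through by $c(m)^{4\kappa(X)}\,\mathbf{C}(m:0)/\mathbf{C}(m+1:0)$ gives the claim; the half-integral statement follows verbatim, the analogue of Proposition \ref{PropJ} producing the constant \eqref{Defc}. The work is essentially bookkeeping: the only genuine obstacle is verifying that, once the determinant formula is inserted, all of $C_0$, $c_\Gamma$, the $\Gamma$-factors and the Plancherel exponentials cancel, leaving precisely the graded determinants (hence $T_X(\rho(m))^4$), the quotient $\mathbf{C}(m+1:0)/\mathbf{C}(m:0)$, and the $\mathcal{LM}J$-difference, together with the telescoping identification of that difference with $c(m)$.
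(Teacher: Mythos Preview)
Your proposal is correct and follows essentially the same approach as the paper: both combine Proposition \ref{PropRSneu}, the determinant formula of Proposition \ref{Detdrd}, Proposition \ref{Torsdreid}, and the explicit form of $\mathcal{LM}J$ from Proposition \ref{PropJ}, tracking the cancellations of $C_0$, $c_\Gamma$, the $\Gamma$-factors and the Plancherel exponentials to reduce everything to the graded determinants and the $\mathcal{LM}J$-difference identified with $c(m)$. The only cosmetic difference is that the paper packages the $\mathcal{LM}J$-terms into an auxiliary function $J_{\rho(m)}(s)$ and applies the determinant formula for variable $s$ before letting $s\to 0$, whereas you evaluate at $s=0$ first and then insert Proposition \ref{Detdrd} at the specific points $s=m+1$, $s=m$; both orderings are valid since all factors are holomorphic and nonzero near the relevant points.
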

\begin{proof}
Let $m\in\mathbb{N}$. To save notation, let us first introduce two
auxiliary functions.
Let 
\begin{align*}
P_{\rho(m)}(s):=&\exp\biggl(-4\pi\vol(X)\int_0^{s+m+1}P_{\sigma_m}
(r)dr+4\pi\vol(X)\int_0^{-s+m+1}P_{\sigma_m}(r)dr\\
&-4\pi\vol(X)\int_0^{-s+m}P_{
\sigma_{m+1}}(r)dr+4\pi\vol(X)\int_0^{s+m}P_{\sigma_{m+1}}(r)dr\biggr).
\end{align*}
Moreover, let
\begin{align*}
J_{\rho(m)}(s):=\exp\biggl(&\mathcal{LM}J(-s+m,\sigma_{m+1})+\mathcal{LM}J(s+m,
\sigma_{m+1})\\ &-\mathcal{LM}J(-s+m+1,\sigma_{m})-\mathcal{LM}J(s+m+1,
\sigma_{m})\biggr).
\end{align*}
Then by Proposition \ref{PropRSneu}, Proposition \ref{Detdrd} and Proposition
\ref{Torsdreid} one has
\begin{align*}
&\frac{\mathbf{C}(m:0)}{\mathbf{C}(m+1:0)}\lim_{s\to
0}\left(R_{\rho(m)}(s)R_{\rho(m)_{\theta}}(s)\frac{\mathbf{C}
(m+1:m-s)(\Gamma(s+1))^{
2\kappa(X)
}}{\mathbf{C}(m:m+1-s)(\Gamma(s-1))^{
2\kappa(X)}}J_{\rho(m)}(s)\right)\\ 
=&\lim_{s\to 0}\left(e^{2c_\Gamma}\frac{S(s+m+1,\sigma_{m})S(-s+m+1,\sigma_{m
})
}
{
S(s+m,\sigma_{m+1})S(-s+m,\sigma_{m+1})}J_{\rho(m)}(s)\right)\\
&\cdot\lim_{s\to 0}\exp{\left(8\pi
\vol(X)\int_0^{-s+m+1}P_{\sigma_{m}}(r)dr-8\pi\vol(X)\int_{0}^{-s+m}P_{\sigma_{
m+1}}(r)dr\right)}\\
=&\lim_{s\to
0}\frac{\det_{\gr}\left(A(\sigma_{m})+(s+m+1)^2\right)\det_{\gr}
\left(A(\sigma_{m})+(-s+m+1)^2\right)}{\det_{\gr}
\left(A(\sigma_{m+1})+(s+m)^2\right)\det_{\gr}
\left(A(\sigma_{m+1})+(-s+m)^2\right)}P_{\rho(m)}(s)\\
=&\frac{{\det}_{\gr}^2\left(A(\sigma_{m})+(m+1)^2\right)}{{\det}_{\gr}^2
\left(A(\sigma_{m+1})+m^2\right)}\\
=&T_X(\rho(m))^4.
\end{align*}
Here we used that the function $P_{\rho(m)}(s)$ is an entire function of
$s$ satisfying $P_{\rho(m)}(0)=1$. Now by Proposition
\ref{PropJ}  the function $J_{\rho(m)}(s)$ is entire for $s$ in a
neighbourhood of
zero and one has $J_{\rho(m)}(0)=c(m)^{4\kappa(X)}$. This
proves
the proposition for $m\in\mathbb{N}$.  For $m+1/2$ one can argue in the same
way.
\end{proof}

Let us finally turn to the proof of Theorem \ref{Theorem2}. We recall that 
the infinite products in \eqref{DefRuelle1} defining the Ruelle zeta functions
$R(s,\sigma)$ converge 
absolutely for $\Real(s)>2$. Let
$m\in\mathbb{N}$, $m\geq 3$.
By Proposition \ref{RundTors} and Proposition \ref{PropordR} we have
\begin{align*}
&T_X(\rho(m))^4\\
=&c(m)^{4\kappa(X)}\frac{\mathbf{C}(m:0)}{\mathbf{C}(m+1:0)}\lim_{
s\to
0}\left(R_{\rho(m)}(s)R_{\rho(m)_{\theta}}(s)\frac{\mathbf{C}(m+1:m-s)}{\mathbf{
C}(m:m+1-s)}(\Gamma(s-1))^{-2\kappa(X)
}\right)\\
=&c(m)^{4\kappa(X)}\frac{\mathbf{C}(2:0)}{\mathbf{C}(3:0)}\lim_{s\to
0}\left(R_{\rho(2)}(s)R_{\rho(2)_\theta}(s)
\frac{\mathbf{C}(3:2-s)}{\mathbf{C}(2:3-s)}(\Gamma(s-1))^{-
2\kappa(X)}\right)
\\
&\cdot\prod_{k=3}^{m}\exp{\left(-\frac{8}{\pi}\vol(X)k\right)}R_{\sym}(k,
\sigma_k)^2\\
=&\frac{c(m)^{4\kappa(X)}}{c(2)^{4\kappa(X)}}T_X(\rho(2))^4\exp{\left(-\frac{4}{
\pi}\vol(X)
(m(m+1)-6)\right)}\prod_{k=3}^{m}R_{\sym}(k,\sigma_k)^2.
\end{align*}
Now one has $\overline{\sigma}=w_0\sigma$ and so by the definition of the
Ruelle zeta function and by meromorphic
continuation one gets $\overline{R(\bar{s},w_0\sigma)}=R(s,\sigma)$.
Thus one has 
$R_{\sym}(k,\sigma_k)=\left|R(k,\sigma_k)\right|^2$. This proves the first
equation in Theorem \ref{Theorem2}. 
Modifying Proposition
6.3, the second
equation in this theorem is obtained in the same way.

\end{document}